\newcommand*{\thisisamsart}{}
\theoremstyle{plain}
\newtheorem{thm}{Theorem}[subsection]
\numberwithin{thm}{section}
\newtheorem{lem}[thm]{Lemma}
\newtheorem{prop}[thm]{Proposition}
\newtheorem{cor}[thm]{Corollary}
\theoremstyle{definition}
\newtheorem{defn}[thm]{Definition}
\theoremstyle{remark}
\newtheorem{eg}[thm]{Example}
\newcommand{\angles}[1]{\left\langle #1 \right\rangle}
\newcommand{\ead}{\email}
\begin{document}

\title{Extension of valuations in characteristic one}
\author{Jeffrey Tolliver}
\ead{tolliver@ihes.fr}
\address{Institut des Hautes \'Etudes Scientifiques, 35 Route de Chartres\\ Bures-sur-Yvette France 91440}

\begin{abstract}We develop an extension of valuations theorem for suitable extensions of idempotent semirings.  As an application, we give a new proof for the classical case of fields.  Along the way, we develop characteristic one analogues of some central results in the theory of valuation rings.\end{abstract}
\maketitle

\section*{Acknowledgements}

I'd like to thank Alain Connes for pointing out some typos in a previous version of this manuscript.

\section{Introduction}

The goal of this paper is to prove an analogue of the theorem on extension of valuations for idempotent semifields, which are defined as follows.

\begin{defn}An abelian semigroup is called \emph{idempotent} if it satisfies $x+x=x$ for all $x$.  A semiring\footnote{In this paper, the word semiring will refer to a commutative semiring.} or semifield is said to be \emph{idempotent} (or said to have \emph{characteristic one}) if the underlying additive semigroup is idempotent.
\end{defn}

Idempotent semirings have occured in connection with non-archimedean geometry, tropical geometry and the study of the Riemann zeta function (c.f. \cite{giansiracusa},\cite{macpherson},\cite{arithmeticsite},\cite{scalingsite}).  A simple example is the semiring of ideals of a ring.  Another important example is the semifield $\Gamma_{\max}=\Gamma\cup\{0\}$ associated to a totally ordered abelian group $\Gamma$.  In this semifield, the multiplication comes from the group structure, while addition is given by declaring $x+y=\max(x,y)$.  A key property of idempotent semigroups is that they come with a canonical partial order: $x\leq y$ if $x+y=y$.  Furthermore, the sum of a collection of elements is simply their least upper bound.  In the case of a totally ordered idempotent semifield, this gives a total order on the multiplicative group, so that every totally ordered idempotent semifield is of the form $\Gamma_{\max}$ up to isomorphism.  

For the present purposes, totally ordered idempotent semifields are crucial because they are naturally the codomains of valuations --- a valuation with value group $\Gamma$ can be seen as actually landing in $\Gamma_{\max}$.  J. and N. Giansiracusa have shown in \cite{giansiracusa} that there is a semiring $\mathcal{S}_f(R)$ associated to any ring $R$ with the property that valuations on $R$ are the same as homomorphisms from $\mathcal{S}_f(R)$ to totally ordered idempotent semifields.  In particular, multiplicative seminorms (i.e. valuations with value group $\mathbb{R}$) are the same as homomorphisms $\mathcal{S}_f(R)\rightarrow\mathbb{R}_{\max}$ where $\mathbb{R}_{\max}$ is the so-called \emph{tropical semifield}.  This semifield occurs in connection with tropical geometry in \cite{giansiracusa}, where it is shown that points on a tropical variety correspond to homomorphisms from its coordinate semiring to $\mathbb{R}_{\max}$.

The above examples hint that homomorphisms into totally ordered idempotent semifields are important when studying valuations on rings.  This might suggest that they are related to valuations on idempotent semirings.  The following definition is a natural generalization of the classical one.

\begin{defn}Let $R$ be a semiring.  A valuation on $R$ is a map $v:R\rightarrow\Gamma_{\max}$ for some totally ordered abelian group $\Gamma$ such that the following properties hold.

\begin{enumerate}
 \item $v(0)=0$ and $v(1)=1$.
 \item $v(xy)=v(x)v(y)$ for all $x,y\in R$.
 \item\label{90} $v(x+y)\leq v(x)+v(y)$ for all $x,y\in R$.
 \item \label{91} $v(x)\leq v(x+y)+v(y)$ for all $x,y\in R$.
\end{enumerate}

The value group of the valuation is the subgroup of $\Gamma$ generated by the nonzero elements of the image of $v$.
\end{defn}

Axiom \ref{90} is the ultrametric inequality.  In the case of rings, Axiom \ref{91} implies $v(x)=v(-x)$.  In the case of idempotent semirings, Axioms \ref{90} and \ref{91} together are equivalent to $v(x+y)=v(x)+v(y)$, which implies that valuations on an idempotent semiring $R$ with value group $\Gamma$ are precisely the surjective homomorphisms $R\rightarrow\Gamma_{\max}$.  One interpretation of Axiom \ref{90} is that $\{x\in R\mid v(x)\leq a\}$ is a subsemigroup.  From this point of view, Axiom \ref{91} says that $\{x\in R\mid v(x)\leq a\}$ is in fact a saturated subsemigroup (c.f. Definition \ref{105b}), which is a useful property to have if one desires to use the valuation to construct quotient objects (as is done in local number theory).

The most fundamental result in the theory of valuations states that given a valuation $v:K\rightarrow \Gamma_{\max}$ and an extension $L$ of $K$, we may extend $v$ to $L$.  More precisely, there exists a group $\Gamma'\supseteq \Gamma$ and a valuation $w:L\rightarrow \Gamma'_{\max}$ such that $w\mid_K=v$.  We shall show in Corollary \ref{608} that the same result is true for idempotent semifields, and shall give partial results in the more general setting of unitgenerated idempotent semirings (c.f. Theorem \ref{518}).  As an application, we will obtain a new proof of the extension of valuations theorem for fields by reduction to the case of unitgenerated idempotent semirings.  Along the way in Sections \ref{s143} and \ref{s392}, we also develop other results of valuation theory in characteristic one, for instance a theory of valuation subsemirings of a valued idempotent semifield.

\section{Definitions}
\newcommand{\sfinite}{finite}

\begin{defn}\label{105b}Let $M$ be an semigroup.  Then a subsemigroup $N\subseteq M$ is \emph{saturated} if $x\in N$ and $x+y\in N$ imply that $y\in N$.
\end{defn}

Saturated subsemigroups are precisely those which arise as kernels of homomorphisms.

\begin{prop}Let $M$ be an idempotent semigroup.  Then a subsemigroup $N\subseteq M$ is saturated if $x\leq y$ and $y\in N$ imply that $x\in N$.
\end{prop}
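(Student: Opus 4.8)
The plan is to reduce the whole statement to one elementary observation about idempotent semigroups, namely that $y\leq x+y$ for all $x,y\in M$; once this is in hand, saturation will follow in a single line from the downward-closure hypothesis.

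First I would record the observation. For $x,y\in M$, commutativity together with idempotence gives
\[
y+(x+y)=(y+y)+x=y+x=x+y,
\]
so by the definition of the canonical order we have $y\leq x+y$ (and, symmetrically, $x\leq x+y$, although only the first inequality is needed below).

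Next, suppose $N\subseteq M$ is a subsemigroup with the property that $a\leq b$ and $b\in N$ force $a\in N$, and let $x,y\in M$ with $x\in N$ and $x+y\in N$; I must show $y\in N$. By the observation $y\leq x+y$, and $x+y\in N$, so applying the hypothesis with $a=y$ and $b=x+y$ yields $y\in N$. Hence $N$ is saturated, as claimed.

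I expect no genuine obstacle here: the argument is purely formal, and the only point requiring any thought is isolating the identity $y+(x+y)=x+y$. For completeness I would also remark that the converse holds, so that "saturated" and "downward closed" coincide in the idempotent setting: if $N$ is saturated and $x\leq y$ with $y\in N$, then $y+x=x+y=y\in N$, so applying the saturation property to $y\in N$ and $y+x\in N$ gives $x\in N$.
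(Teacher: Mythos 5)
Your proof is correct, and the paper does not actually supply an argument for this proposition (it is stated without proof immediately after Definition~\ref{105b}). Your route is the natural one: the single observation $y \leq x+y$, which you derive cleanly from idempotence and commutativity via $y+(x+y)=x+y$, is exactly what makes the downward-closure hypothesis apply to the pair $y$ and $x+y$ in the definition of saturation. One small remark on reading the statement: as written it asserts only the implication ``downward closed $\Rightarrow$ saturated,'' and your main argument addresses precisely that; your parenthetical proof of the converse (that saturated subsemigroups are downward closed) is a welcome bonus, since it shows the two notions genuinely coincide in the idempotent setting, which is how the rest of the paper in fact uses this proposition. No gaps.
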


\begin{defn}Let $R$ be an idempotent semiring and $M$ be an $R$-module.  Then $M$ is \emph{\sfinite} if there is some $x\in M$ such that for all $y\in M$ there exists $r\in R$ with $y\leq rx$.
\end{defn}

A module is {\sfinite} if it is generated as a saturated submodule of itself by a single element.  It turns out that the saturated submodule generated by a finite collection of elements is generated by their sum.  Hence a module is {\sfinite} if it is the smallest saturated submodule of itself containing some specific finite subset.

\begin{defn}Let $R$ be an idempotent semiring and $M$ be an $R$-module.  $M$ is noetherian if every submodule is {\sfinite}.   $M$ is seminoetherian if every {\sfinite} submodule is noetherian.
\end{defn}

\begin{defn}Let $R$ be an idempotent semiring.  $R$ is \emph{simple} if the only saturated ideals are $0$ and $R$.\end{defn}

An easy argument shows the following.

\begin{prop}\label{87}Let $R$ be an idempotent semiring.  $R$ is simple if and only if for every $x\in R$ there is some $y\in R$ with $xy\geq 1$.
\end{prop}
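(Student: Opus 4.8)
The plan is to handle both implications by studying, for a given $x\in R$, the smallest saturated ideal containing $x$, and showing it is all of $R$ exactly when there is a $y$ with $xy\geq 1$.

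For the ``only if'' direction, assume $R$ is simple and let $x\in R$ (we may take $x\neq 0$, the zero element being a degenerate case). I would set $I=\{z\in R\mid z\leq rx\text{ for some }r\in R\}$ and check that $I$ is a saturated ideal. Closure under addition uses that the canonical order is compatible with $+$: from $z_1\leq r_1x$ and $z_2\leq r_2x$ one gets $z_1+z_2\leq r_1x+r_2x=(r_1+r_2)x$. Closure under multiplication by $R$ uses that it is compatible with multiplication: from $z\leq rx$ one gets $sz\leq s(rx)=(sr)x$. Saturation follows because if $z\leq rx$ and $z+z'\leq sx$, then $z'\leq z+z'$ (an immediate consequence of idempotency) together with transitivity of $\leq$ give $z'\leq sx$. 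Since $x=1\cdot x\in I$ and $x\neq 0$, the ideal $I$ is nonzero, so simplicity forces $I=R$; in particular $1\in I$, i.e. $1\leq rx$ for some $r$, which is the desired conclusion with $y=r$.

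For the ``if'' direction, assume every (nonzero) $x\in R$ admits $y$ with $xy\geq 1$, and let $I$ be a saturated ideal with $I\neq 0$. Pick a nonzero $x\in I$ and a corresponding $y$; then $xy\in I$ because $I$ is an ideal. The relation $xy\geq 1$ is by definition the equation $xy+1=xy$, so from $xy\in I$, $xy+1=xy\in I$, and saturation of $I$ we conclude $1\in I$, hence $I=R$. Therefore the only saturated ideals are $0$ and $R$, so $R$ is simple.

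The whole argument is elementary; the only steps needing a moment's thought are the verification that the candidate $I$ in the first direction really is saturated --- which reduces to transitivity of the canonical order plus the inequality $z'\leq z+z'$ --- and, in the second direction, rewriting $xy\geq 1$ as $xy+1=xy$ so that Definition \ref{105b} applies directly. Alternatively, one can check that $I$ is downward closed and invoke the preceding proposition, since in an idempotent semigroup ``saturated'' and ``downward closed'' coincide.
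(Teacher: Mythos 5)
The paper does not actually supply a proof of this proposition --- it just asserts ``an easy argument shows the following'' --- so there is nothing to compare against; your job was to supply the argument, and you have done so correctly. For the ``only if'' direction, taking $I$ to be the smallest saturated ideal containing $x$ (your $\{z\mid z\leq rx \text{ for some } r\in R\}$) and invoking simplicity is exactly the right move, and your verification that $I$ is a saturated ideal is sound. For the ``if'' direction, rewriting $xy\geq 1$ as $xy+1=xy$ so that Definition \ref{105b} (with $a=xy$, $b=1$) gives $1\in I$ directly is clean.

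One small point worth flagging, though it is really an imprecision in the paper's statement rather than in your proof: as literally written, the proposition quantifies over \emph{every} $x\in R$, and for $x=0$ the condition $0\cdot y\geq 1$ forces $1=0$, so the ``only if'' direction would fail for any nontrivial simple $R$. The intended reading (and the one the paper actually uses, e.g.\ in the proof that unitgenerated implies simple, where $x$ is written as a nonempty sum of units) is ``for every nonzero $x$.'' You note parenthetically that $x=0$ is ``a degenerate case,'' but it would be more accurate to say the statement must be read with $x\neq 0$, which is also all the ``if'' direction ever uses. With that understanding your proof is complete.
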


\begin{defn}An idempotent semiring $K$ is \emph{unitgenerated} if every element is a finite sum of units.
\end{defn}

Any idempotent semifield is unitgenerated.  The following result shows any unitgenerated idempotent semiring is simple.

\begin{prop}Let $R$ be a unitgenerated idempotent semiring.  Then $R$ is simple.
\end{prop}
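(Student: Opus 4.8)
The plan is to reduce to the criterion of Proposition \ref{87}: it suffices to show that for each nonzero $x\in R$ there is some $y\in R$ with $xy\ge 1$. First I would record two elementary facts about the canonical partial order on an idempotent semiring. One: for any $a_1,\dots,a_n\in R$ and any index $i$, $a_i\le a_1+\dots+a_n$, since adding $a_i$ once more to the sum changes nothing, by idempotency. Two: the order is compatible with multiplication, for if $a\le b$, i.e.\ $a+b=b$, then $ac+bc=(a+b)c=bc$ by distributivity, so $ac\le bc$ for every $c\in R$.

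Given these, the argument is short. Let $x\in R$ be nonzero. Since $R$ is unitgenerated, I would write $x=u_1+\dots+u_n$ with each $u_i$ a unit; because $x\ne 0$ this sum is nonempty, so a unit summand $u_1$ exists. By fact one, $u_1\le x$; multiplying by $u_1^{-1}$ and applying fact two gives $1=u_1u_1^{-1}\le xu_1^{-1}$. Hence $y=u_1^{-1}$ satisfies the condition of Proposition \ref{87}, and $R$ is simple.

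I expect no real obstacle; the one point worth isolating is that one should not try to exploit all the summands $u_i$ simultaneously, but simply observe that $x$ already dominates a single unit, whose inverse is the element sought. Alternatively, one can bypass Proposition \ref{87} entirely: given a saturated ideal $I\ne 0$, choose a nonzero $x\in I$ and write it as a nonempty sum of units as above; then $I$ contains $u_1$ because $u_1\le x\in I$ and $I$ is saturated, hence $I$ contains $u_1u_1^{-1}=1$, hence $I=R$.
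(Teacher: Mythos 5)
Your proof is correct and is essentially the same as the paper's: write a nonzero $x$ as a finite sum of units, note that $x$ dominates one of the unit summands, and multiply by its inverse to land in the situation of Proposition \ref{87}. You are a bit more explicit than the paper about why the decomposition is nonempty (the paper silently skips $x=0$) and about order-compatibility with multiplication, and you also supply a direct alternative bypassing Proposition \ref{87}, but the core argument is identical.
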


\begin{proof}Let $x\in R$.  Then $x=\sum_{i\in I}u_i$ where $I$ is a finite set and each $u_i$ is a unit.  Hence $x\geq u_i$ so $xu_i^{-1}\geq 1$.  Proposition \ref{87} implies the result.
\end{proof}

\begin{eg}Let $K$ be an idempotent semifield.  Then the semiring of Laurent polynomials $K[x_1,x_1^{-1},\ldots,x_n,x_n^{1}]$ is unitgenerated.  Furthermore, any quotient of this semiring is unitgenerated as well.  If $K=\mathbb{R}_{\max}$ then quotients of the semiring of Laurent polynomials occur naturally in the framework of tropical geometry.
\end{eg}

Since simple semirings have no saturated ideals, they seem a natural generalization of fields.  Semifields are of course also a natural generalization of fields.  Because idempotent semifields are unitgenerated and unitgenerated idempotent semirings are simple, this suggests that unitgenerated semirings should also be viewed as an analogue of fields.  The following example yields a connection between unitgenerated idempotent semirings and fields.

\begin{eg}Let $R$ be a ring and $A$ be a ring containing $R$.  Let $\mathcal{S}_f(A,R)$ be the semiring of finitely generated $R$-submodules of $A$ (c.f. Definition \ref{627} and the comments following it).  If $A$ is a field, then $\mathcal{S}_f(A,R)$ is unitgenerated since the nonzero principal submodules are all units.  If $\mathcal{S}_f(A,R)$ is unitgenerated (or even simple), then applying Proposition \ref{87} to $xR$ (for any nonzero $x\in A$ shows there is a submodule $M$ such that $1\in R\subseteq xM$.  This implies that $x$ is a unit.  Hence $\mathcal{S}_f(A,R)$ is unitgenerated if and only if $A$ is a field, which in turn occurs if and only if $A$ is simple.
\end{eg}

\section{Valuation semirings}\label{s143}

\begin{lem}\label{164}Let $K$ be a unitgenerated idempotent semiring and $\Gamma$ a totally ordered abelian group.  Let $v:K\rightarrow \Gamma_{\max}$ be a homomorphism.  Define a relation $\preceq$ on $K^\times$ by $x\preceq y$ if $v(x)\leq v(y)$.  Then $\preceq$ makes $K^\times$ into a preordered abelian group.
\end{lem}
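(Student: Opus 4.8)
The plan is to check directly the three conditions defining a preordered abelian group: that $\preceq$ is reflexive, that it is transitive, and that it is translation invariant under the group law of $K^\times$. The guiding observation is that $\preceq$ is simply the preimage under $v$ of the canonical order $\leq$ on $\Gamma_{\max}$, and that this order already has the relevant formal properties in any idempotent semiring, so the whole statement will reduce to two elementary facts about the canonical order $a\leq b \iff a+b=b$.

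Reflexivity is immediate: $v(x)\leq v(x)$ gives $x\preceq x$. Transitivity follows because $\leq$ on $\Gamma_{\max}$ is transitive — if $v(x)\leq v(y)$ and $v(y)\leq v(z)$ then $v(x)\leq v(z)$ — which is a general feature of the canonical order on an idempotent semigroup (here $\Gamma_{\max}$ is in fact totally ordered, but totality is not needed). For translation invariance, I would take $x\preceq y$ and $z\in K^\times$ and show $xz\preceq yz$, i.e. $v(x)v(z)\leq v(y)v(z)$. This is again a property of the canonical order valid in any semiring: from $v(x)+v(y)=v(y)$ distributivity gives $v(x)v(z)+v(y)v(z)=(v(x)+v(y))v(z)=v(y)v(z)$, hence $v(x)v(z)\leq v(y)v(z)$; since $v$ is a homomorphism, $v(xz)=v(x)v(z)$ and $v(yz)=v(y)v(z)$, so $xz\preceq yz$. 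As $K^\times$ is an abelian group, this finishes the verification.

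There is no genuine obstacle here; the only point worth isolating is that the canonical order on an idempotent semiring is automatically compatible with multiplication, which is exactly what makes the pulled-back relation $\preceq$ translation invariant. It may also be worth remarking that $v$ carries $K^\times$ into the group $\Gamma\subseteq\Gamma_{\max}$ — since $v(x)v(x^{-1})=v(1)=1\neq 0$ forces $v(x)$ to be a nonzero element of $\Gamma_{\max}$ — so that $\preceq$ could equivalently be obtained by pulling back the order of the ordered group $\Gamma$; but this refinement is not needed for the statement as given, and neither is the hypothesis that $K$ be unitgenerated, which is a standing assumption used only in the later development.
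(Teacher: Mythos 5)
Your proof is correct, and it is the routine direct verification one would expect: reflexivity and transitivity of $\preceq$ are inherited from the canonical order on $\Gamma_{\max}$, and translation invariance follows from $v$ being multiplicative together with the fact that the canonical order on any idempotent semiring is compatible with multiplication (from $a+b=b$ one gets $ac+bc=(a+b)c=bc$). The paper itself gives no proof of this lemma, evidently regarding it as immediate, so there is nothing to compare against; your write-up simply supplies the omitted details. Your side remarks are also accurate: $v$ indeed sends $K^\times$ into $\Gamma$ because $v(x)v(x^{-1})=v(1)=1\neq 0$, so one could equivalently pull back the order on the ordered group $\Gamma$, and neither the unitgenerated hypothesis on $K$ nor the totality of the order on $\Gamma$ plays any role in this particular lemma.
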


\begin{prop}Let $K$ be a unitgenerated idempotent semiring and $\Gamma$ a totally ordered abelian group.  Let $v:K\rightarrow \Gamma_{\max}$ be a homomorphism.  Let $R=\{x\in K\mid v(x)\leq 1\}$.  Let $\preceq$ be as in Lemma \ref{164}.  $\preceq$ induces a well-defined total order on $K^\times/R^\times$.  Furthermore, $v$ induces an isomorphism $K^\times/R^\times\cong v(K^\times)\subseteq\Gamma$ of totally ordered abelian groups.
\end{prop}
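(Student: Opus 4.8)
The plan is to first pin down the group $R^\times$ concretely: I claim $R^\times=\{x\in K^\times\mid v(x)=1\}$. Note first that $R$ is a subsemiring of $K$, since $v$ is a homomorphism into $\Gamma_{\max}$ and $v(x+y)=\max(v(x),v(y))\le 1$ whenever $v(x),v(y)\le 1$ (the adjoined $0$ of $\Gamma_{\max}$ is also $\le 1$), while $v(xy)\le 1$ is immediate. Now if $x\in R^\times$ with inverse $y\in R$, then $v(x)v(y)=v(1)=1$ with $v(x)\le 1$ and $v(y)\le 1$, which forces $v(x)=1$; conversely, if $x\in K^\times$ with $v(x)=1$ then $v(x^{-1})=v(x)^{-1}=1\le 1$, so both $x$ and $x^{-1}$ lie in $R$ and $x\in R^\times$. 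In particular $R^\times=\ker\bigl(v|_{K^\times}\bigr)$, where $v|_{K^\times}$ is viewed as a group homomorphism $K^\times\to\Gamma$ (it lands in $\Gamma$ since units map to units).

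Next I would check that the induced relation on $K^\times/R^\times$ is well defined: if $x'=xr$ and $y'=ys$ with $r,s\in R^\times$, then $v(x')=v(x)v(r)=v(x)$ and likewise $v(y')=v(y)$, so the truth of $v(x)\le v(y)$ depends only on the cosets $xR^\times,yR^\times$. Transitivity and totality of the induced relation are inherited from the total order on $\Gamma$, since $v(K^\times)\subseteq\Gamma$ is totally ordered; antisymmetry is exactly the observation that $v(x)\le v(y)$ and $v(y)\le v(x)$ give $v(xy^{-1})=1$, hence $xy^{-1}\in R^\times$, i.e.\ $xR^\times=yR^\times$. Compatibility with multiplication, making $K^\times/R^\times$ a totally ordered abelian group, follows because $v(x)\le v(y)$ implies $v(zx)=v(z)v(x)\le v(z)v(y)=v(zy)$ for every $z\in K^\times$, using that $\Gamma$ is an ordered group; alternatively this descends directly from Lemma \ref{164} once well-definedness is in hand.

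Finally, the map $\bar v\colon K^\times/R^\times\to v(K^\times)$, $xR^\times\mapsto v(x)$, is precisely the map induced on the quotient by the group homomorphism $v|_{K^\times}\colon K^\times\to\Gamma$ modulo its kernel $R^\times$; by the first isomorphism theorem for abelian groups it is an isomorphism onto $v(K^\times)$, which is therefore a subgroup of $\Gamma$. By construction $xR^\times\preceq yR^\times$ if and only if $v(x)\le v(y)$ if and only if $\bar v(xR^\times)\le\bar v(yR^\times)$, so $\bar v$ and its inverse are order-preserving, giving the asserted isomorphism of totally ordered abelian groups.

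I do not anticipate a real obstacle in this argument; the only point that deserves care is the identification $R^\times=\{x\in K^\times\mid v(x)=1\}$ together with the fact that $R$ is a subsemiring, after which the statement reduces to the first isomorphism theorem plus routine order bookkeeping.
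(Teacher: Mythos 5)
Your proof is correct and follows essentially the same route as the paper's: identify $R^\times$ as the kernel of the group homomorphism $v|_{K^\times}\colon K^\times\to\Gamma$, invoke the first isomorphism theorem to get the bijection onto $v(K^\times)$, and observe that the order on cosets is by construction the one pulled back from $\Gamma$. You simply spell out a few steps (the identification of $R^\times$, well-definedness, antisymmetry) that the paper compresses into ``Clearly'' and a WLOG replacement of $\Gamma$ by $v(K^\times)$.
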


\begin{proof}Clearly $R^\times=\{x\in K^\times\mid v(x)=1\}$.  By replacing $\Gamma$ with $v(K^\times)$, we may assume without loss of generality that $v(K^\times)=\Gamma$.  Restricting $v$ gives a surjective group homomorphism $K^\times\rightarrow \Gamma$ which induces an isomorphism $K^\times/R^\times\cong \Gamma=v(K^\times)$.  Let $[x],[y]\in K^\times/R^\times$ be the classes of $x,y\in K$.  Then $[x]\preceq [y]$ if and only if $v(x)\leq v(y)$.  Hence the desired isomorphism and its inverse preserve the order.
\end{proof}

\begin{defn}A valuation subsemiring $R$ of a unitgenerated idempotent semiring $K$ is a semiring $R$ such that there exists an embedding $R\rightarrow K$ of $R$ as a saturated subsemiring of $K$ such that for every $x\in K^\times$, either $x\in R$ or $x^{-1}\in R$.
\end{defn}

\begin{prop}Let $K$ be a unitgenerated idempotent semiring and $\Gamma$ a totally ordered abelian group.  Let $v:K\rightarrow \Gamma_{\max}$ be a homomorphism.  Let $R=\{x\in K\mid v(x)\leq 1\}$.  Then $R$ is a valuation subsemiring of $K$.
\end{prop}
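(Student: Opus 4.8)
The plan is to verify, one at a time, the three conditions bundled into the definition of a valuation subsemiring, taking the embedding $R\to K$ to be the inclusion. The single property of $v$ that drives everything is monotonicity: if $x\le y$ in $K$, i.e.\ $x+y=y$, then applying $v$ yields $v(x)+v(y)=v(y)$, hence $v(x)\le v(y)$ in $\Gamma_{\max}$. (This is just the preorder on $K^\times$ of Lemma \ref{164}, now recorded on all of $K$; one pays a moment's attention to the fact that a homomorphism of idempotent semirings is automatically order-preserving.)

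First I would check that $R$ is a subsemiring. Since $v(0)=0\le 1$ and $v(1)=1$, both $0$ and $1$ lie in $R$. For $x,y\in R$ we have $v(xy)=v(x)v(y)\le 1$, and, because addition in $\Gamma_{\max}$ is the maximum, $v(x+y)=v(x)+v(y)=\max(v(x),v(y))\le 1$; so $R$ is closed under multiplication and addition.

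Next I would show that $R$ is saturated (as an additive subsemigroup). By the proposition characterizing saturated subsemigroups of an idempotent semigroup, it suffices to check that $y\le x$ with $x\in R$ forces $y\in R$; and indeed $v(y)\le v(x)\le 1$ by monotonicity of $v$.

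Finally, for $x\in K^\times$ I would argue that $R$ contains $x$ or $x^{-1}$. From $v(x)v(x^{-1})=v(1)=1\ne 0$ we see $v(x)$ is a genuine element of $\Gamma$, not the adjoined bottom element, so $v(x^{-1})=v(x)^{-1}$. As $\Gamma$ is totally ordered, either $v(x)\le 1$, giving $x\in R$, or $v(x)\ge 1$, giving $v(x^{-1})\le 1$ and $x^{-1}\in R$. There is no genuine obstacle here; the only points deserving a second glance are that $v$ is automatically order-preserving and that units of $K$ map into $\Gamma$ rather than to the adjoined zero of $\Gamma_{\max}$.
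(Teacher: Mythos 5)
The paper states this proposition without proof, so there is no "paper's approach" to compare against. Your argument is correct and is essentially the canonical one: you unpack the definition of a valuation subsemiring into its three component conditions (subsemiring, saturated, units-or-inverses) and verify each directly from the fact that $v$ is a semiring homomorphism into $\Gamma_{\max}$. The observations you flag as deserving a second look are exactly the right ones: that any homomorphism of idempotent semirings is automatically order-preserving (since $x\le y$ means $x+y=y$, which is preserved), and that a unit of $K$ cannot map to $0$ in $\Gamma_{\max}$, so one is free to invert inside $\Gamma$ and invoke the total order.
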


We would like to prove the converse of the above proposition.  When $R$ is a valuation subsemiring of a unitgenerated idempotent semiring $K$, we define a relation $\preceq$ on $K^\times$ by $x\preceq y$ if $xy^{-1}\in R$.

\begin{lem}Let $R$ be a valuation subsemiring of a unitgenerated idempotent semiring $K$.  Then $\preceq$ makes $K^\times$ into a preordered abelian group.
\end{lem}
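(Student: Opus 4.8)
The plan is to reduce the statement to the elementary fact that any submonoid $P$ of an abelian group $G$ induces a translation-invariant preorder on $G$ via $x \preceq y \iff xy^{-1} \in P$. So I would first identify the relevant submonoid: set $P := R \cap K^\times$, where we regard $R$ as a subset of $K$ via the given embedding. Since that embedding $R \hookrightarrow K$ is a homomorphism of semirings, it carries the multiplicative unit to $1 \in K$ and respects products; hence $P$ contains $1$ and is closed under the multiplication of $K^\times$, i.e. $P$ is a submonoid of $K^\times$. By construction $x \preceq y$ holds exactly when $xy^{-1} \in P$.

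With this in hand, the three points to verify --- reflexivity, transitivity, and compatibility of $\preceq$ with the group operation --- are each immediate. Reflexivity holds because $xx^{-1} = 1 \in P$ for every $x \in K^\times$. Transitivity holds because if $xy^{-1} \in P$ and $yz^{-1} \in P$, then $xz^{-1} = (xy^{-1})(yz^{-1}) \in P$, using that $P$ is multiplicatively closed. Translation invariance holds because $(xz)(yz)^{-1} = xy^{-1}$ in $K^\times$ for all $x, y, z$, so $xz \preceq yz$ if and only if $x \preceq y$; commutativity of $K^\times$ makes the one-sided statement suffice.

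I do not anticipate any genuine obstacle here. The one thing worth keeping straight is that this lemma uses only the subsemiring structure of $R$ (closure under products and containment of $1$); the remaining parts of the definition of a valuation subsemiring --- that $R$ is saturated in $K$ and that $x \in R$ or $x^{-1} \in R$ for every $x \in K^\times$ --- are not needed for the present statement, and will instead be what later forces $\preceq$ to be a \emph{total} preorder and what lets one recover $R$ from the associated valuation in the converse direction.
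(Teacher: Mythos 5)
Your argument is correct and complete. The paper states this lemma without proof, evidently regarding it as routine; your verification via the submonoid $P = R \cap K^\times$ (reflexivity from $1 \in P$, transitivity from multiplicative closure, translation-invariance from $(xz)(yz)^{-1} = xy^{-1}$) is exactly the standard argument one would supply, and your closing remark correctly identifies that saturation and the dichotomy $x \in R$ or $x^{-1} \in R$ are reserved for the later totality and reconstruction statements.
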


\begin{lem}Let $R$ be a valuation subsemiring of a unitgenerated idempotent semiring $K$.  Then $\preceq$ induces a well-defined relation on $K^\times/R^\times$ making $K^\times/R^\times$ into a totally ordered abelian group.
\end{lem}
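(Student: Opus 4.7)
The plan is to verify in sequence the three things the statement asserts: (a) $\preceq$ descends to a well-defined relation on $K^\times/R^\times$, (b) this descended relation is a partial order (antisymmetry is the only nontrivial axiom once we have the preorder from the previous lemma), and (c) the order is total and compatible with the group structure. Each of these reduces to a one-line manipulation using the defining property of a valuation subsemiring and the fact that $R^\times = R \cap (R^\times)$ is closed under inverse.

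First I would show that $\preceq$ passes to the quotient. Suppose $x \preceq y$, and let $x' = xu$, $y' = yw$ with $u,w \in R^\times$ be other representatives of the classes $[x],[y] \in K^\times/R^\times$. Then
\[ x'(y')^{-1} = (xy^{-1})(uw^{-1}). \]
Since $xy^{-1} \in R$ by hypothesis and $uw^{-1} \in R^\times \subseteq R$, the product lies in $R$, so $x' \preceq y'$. This shows the relation descends.

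Next I would check antisymmetry on the quotient: if $[x]\preceq [y]$ and $[y]\preceq [x]$, then $xy^{-1} \in R$ and its inverse $yx^{-1} \in R$, so $xy^{-1} \in R^\times$, whence $[x]=[y]$. Totality is immediate from the definition of a valuation subsemiring: for any $x,y\in K^\times$ either $xy^{-1}\in R$ or $(xy^{-1})^{-1} = yx^{-1}\in R$, so $[x]\preceq [y]$ or $[y]\preceq [x]$. Compatibility with the group operation is inherited from the preorder on $K^\times$ established in the previous lemma, since multiplication by a fixed $[z]$ only multiplies the relevant ratio $xy^{-1}$ on the right by $zz^{-1}=1$.

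There is no real obstacle here; everything follows mechanically from the definitions, and the only place the valuation-subsemiring hypothesis gets used in an essential way (rather than through the previous lemma) is in establishing totality. The one point worth being careful about is the reduction of the well-definedness check to left-translation by units of $R$, which uses commutativity of $K$ so that the factors of $u$ and $w^{-1}$ can be collected on the right.
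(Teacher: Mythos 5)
Your proof is correct and follows essentially the same path as the paper's: use the defining property of a valuation subsemiring for totality, identify the equivalence $x\preceq y\preceq x$ with membership $xy^{-1}\in R^\times$ (hence with the coset relation) for well-definedness and antisymmetry, and inherit the remaining preorder/group-compatibility axioms from the previous lemma. The paper packages well-definedness and antisymmetry into the single observation that the preorder's induced equivalence coincides with the coset relation, whereas you check them separately, but the content is the same.
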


\begin{proof}For $x,y\in K^\times$ it follows from the definition of a valuation subsemiring that either $xy^{-1}\in R$ or $x^{-1}y\in R$.  Hence, either $x\preceq y$ or $y\preceq x$.  Observe furthermore that $x\preceq y\preceq x$ if and only if $xy^{-1}\in R^\times$, which occurs precisely when $x,y$ define the same element in the quotient $K^\times/R^\times$.  From these facts and the fact that $\preceq$ makes $K^\times$ into a preordered abelian group, we may obtain the result.
\end{proof}

We will sometimes write $\leq$ instead of $\preceq$ for the induced order on $K^\times/R^\times$, since there is no danger of confusing it with the order on $K$.  We are now ready to construct the valuation on $K$ induced by its valuation subsemiring.

\begin{prop}\label{193}Let $R$ be a valuation subsemiring of a unitgenerated idempotent semiring $K$.  Let $\Gamma=K^\times/R^\times$, and for any $u\in K^\times$, we write $[u]\in K^\times/R^\times$ for its class.  Let $v:K\rightarrow \Gamma_{\max}$ be given by $v(\sum_{i\in I} x_i)=\sum_{i\in I}[x_i]$ for any finite set $I$ and any collection of units $x_i\in K^\times$.  Let $x,y\in K$ with $y\in K^\times$.  Then $v(x)\leq [y]$ if and only if $xy^{-1}\in R$.  In particular, $v$ is well-defined.  Furthermore, $v$ is a surjective homomorphism of semirings.
\end{prop}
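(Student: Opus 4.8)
The plan is to establish the displayed biconditional first, for an \emph{arbitrary} expression of $x$ as a finite sum of units, and then to read off well-definedness, the homomorphism property and surjectivity from it. Two preliminary observations set things up. Since $K$ is unitgenerated, every $x \in K$ can be written $x = \sum_{i \in I} x_i$ with $I$ finite and each $x_i \in K^\times$; here the empty sum is $0$, while a nonempty sum of units is never $0$ (each summand is $\le$ the sum, and a unit that is $\le 0$ must be $0$). By the preceding lemmas $\Gamma = K^\times/R^\times$ is a totally ordered abelian group and addition in $\Gamma_{\max}$ is the maximum, so $\sum_{i \in I}[x_i]$ equals $[x_{i_0}]$ for some $i_0 \in I$ when $I \ne \emptyset$ and is $0$ otherwise. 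Finally, because $R$ is a saturated subsemiring it is a subsemigroup of $(K,+)$ containing $0$ and $1$ that is downward closed for $\le$ (if $a \le b$ and $b \in R$, then $b = a + b \in R$, so $a \in R$ by saturation).

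Next I would fix $x = \sum_{i \in I} x_i$ with the $x_i$ units and an element $y \in K^\times$, and show that $\sum_{i \in I}[x_i] \le [y]$ if and only if $x y^{-1} \in R$. Unwinding the order on $\Gamma$, the left-hand side holds iff $[x_i] \le [y]$, i.e.\ $x_i y^{-1} \in R$, for every $i \in I$. If it does, then $x y^{-1} = \sum_{i \in I} x_i y^{-1}$ lies in $R$ because $R$ is closed under finite sums (including the empty one); conversely, if $x y^{-1} = \sum_{i \in I} x_i y^{-1} \in R$, then each $x_i y^{-1} \le x y^{-1} \in R$, so $x_i y^{-1} \in R$ by downward closure. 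Well-definedness now follows: given two such expressions $x = \sum_{i \in I} x_i = \sum_{j \in J} y_j$ with $x \ne 0$, choose $j_0 \in J$ with $[y_{j_0}] = \sum_{j \in J}[y_j]$; then $y_j y_{j_0}^{-1} \in R$ for every $j$ (since $[y_j] \le [y_{j_0}]$), so $x y_{j_0}^{-1} = \sum_{j \in J} y_j y_{j_0}^{-1} \in R$, and the biconditional gives $\sum_{i \in I}[x_i] \le [y_{j_0}] = \sum_{j \in J}[y_j]$; the reverse inequality holds by the symmetric argument, and when $x = 0$ both index sets are empty. Thus $v$ is well-defined, and the biconditional of the statement is exactly what has just been proved.

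It remains to check that $v$ is a surjective homomorphism of semirings. Surjectivity, together with $v(0) = 0$ and $v(1) = 1$, is immediate: $0$ is the empty sum, $v(1) = [1] = 1$, and $v(u) = [u]$ for every $u \in K^\times$, so the image is all of $\Gamma \cup \{0\} = \Gamma_{\max}$. Additivity holds because concatenating expressions of $x$ and $y$ as finite sums of units yields such an expression of $x + y$, whence $v(x+y) = v(x) + v(y)$; multiplicativity follows by expanding $xy = \sum_{(i,j) \in I \times J} x_i y_j$ and invoking distributivity in $\Gamma_{\max}$, which yields $v(xy) = \sum_{i,j}[x_i][y_j] = v(x)v(y)$. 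The one genuinely delicate step is well-definedness, and all of its force comes from the biconditional together with $R$ being at once additively closed and saturated; the rest is bookkeeping.
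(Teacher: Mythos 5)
Your proof is correct and follows essentially the same route as the paper's: you first establish the biconditional $v(x)\le[y]\iff xy^{-1}\in R$ for a fixed decomposition of $x$ into units (using saturation of $R$ for one direction and additive closure for the other), then derive well-definedness by comparing two decompositions through a maximal summand, and finally read off the homomorphism and surjectivity properties from the definition via distributivity. The only cosmetic difference is that you select the witnessing unit $y_{j_0}$ from among the summands of the second decomposition, whereas the paper takes an abstract $y$ with $v'(x)=[y]$; the underlying argument is identical.
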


\begin{proof}
Fix a decomposition $x=\sum_{i\in I}x_i$ as a sum of elements $x_i\in K^\times$ indexed by a finite set $I$.  Suppose $xy^{-1}\in R$.  Then since $x_iy^{-1}\leq xy^{-1}$, it follows that $x_iy^{-1}\in R$ for each $i$.  We may rewrite this as $[x_i]\preceq [y]$.  Then $\sum_{i\in I}[x_i]\leq [y]$ inside $\Gamma_{\max}$, so $v(x)\leq [y]$.  Now suppose that $v(x)\leq [y]$.  Then $\sum_{i\in I}[x_i]\leq [y]$.  Hence for each $i$, $[x_i]\leq [y]$, or equivalently $x_iy^{-1}\in R$.  Hence $xy^{-1}=\sum_{i\in I}x_iy^{-1}\in R$.  

To see that $v$ is well-defined, define $v'(x)$ in the same way as $v(x)$, but using a different sum decomposition.  If $x=0$, the only sum decomposition is the empty one, and there is nothing to prove.  Otherwise, we are decomposing $x$ as a sum over a nonempty set, so $v(x),v'(x)\neq 0$.  Then $v(x),v'(x)\in \Gamma=K^\times/R^\times$.  Hence there is some $y$ such that $v'(x)=[y]$.  Since $v'(x)\leq [y]$, $xy^{-1}\in R$, so $v(x)\leq [y]=v'(x)$.  The reverse inequality is similar.  

It is clear from the definition that $v$ is a semigroup homomorphism.  To check it preserves multiplication, note that by the distributive law, we may check this on a generating set, and in particular on $K^\times$.  Let $x,y\in K^\times$.  Then $v(xy)=[xy]=[x][y]=v(x)v(y)$.  It is also easy to see $v(1)=1$.  To see that it is surjective, note that it is surjective even when restricted to $K^\times\cup\{0\}$.
\end{proof}

\begin{thm}Let $R$ be a valuation subsemiring of a unitgenerated idempotent semiring $K$.  Let $\Gamma$ and $v$ be as in Proposition \ref{193}.  Then $R=\{x\in K\mid v(x)\leq 1\}$.
\end{thm}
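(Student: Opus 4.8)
The plan is to show the two inclusions $R\subseteq\{x\in K\mid v(x)\leq 1\}$ and $\{x\in K\mid v(x)\leq 1\}\subseteq R$ separately, exploiting Proposition \ref{193} with $y=1$. Note that $1\in K^\times$ and $[1]$ is the identity element of $\Gamma=K^\times/R^\times$, which is exactly the multiplicative unit $1$ of $\Gamma_{\max}$. So Proposition \ref{193} (applied with $y=1$) tells us that for any $x\in K$ we have $v(x)\leq 1$ if and only if $x\cdot 1^{-1}=x\in R$. This is literally the statement we want, so in fact the theorem is an immediate corollary of Proposition \ref{193}.

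To spell it out: first I would observe that $R$ is by hypothesis a saturated subsemiring of $K$, so the notation $xy^{-1}\in R$ in Proposition \ref{193} makes sense as membership in $R$ viewed inside $K$. Then I would invoke Proposition \ref{193} with the choice $y=1\in K^\times$: it gives $v(x)\leq[1]$ iff $x\cdot 1^{-1}\in R$, i.e. $v(x)\leq[1]$ iff $x\in R$. Finally I would note $[1]=1$ in $\Gamma=K^\times/R^\times$, and that under the identification of $\Gamma_{\max}$ this $[1]$ is the unit of $\Gamma_{\max}$, so $v(x)\leq[1]$ is the same condition as $v(x)\leq 1$. Combining, $x\in R\iff v(x)\leq 1$, which is the asserted equality of sets.

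There is essentially no obstacle here; the only thing to be slightly careful about is the bookkeeping of which ``$1$'' is meant — the unit of the group $\Gamma$, the unit of the semifield $\Gamma_{\max}$, and the unit $1\in K$ — and to confirm that $R$ being saturated is what lets us read $xy^{-1}\in R$ as a genuine membership statement rather than something weaker. One might also remark that this theorem, together with the preceding proposition that $\{x\in K\mid v(x)\leq 1\}$ is always a valuation subsemiring, establishes a bijective correspondence between valuation subsemirings of $K$ and (equivalence classes of) valuations on $K$, mirroring the classical dictionary between valuation rings and valuations on a field.

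\begin{proof}
Since $1\in K^\times$, we may apply Proposition \ref{193} with $y=1$. Writing $[1]\in\Gamma=K^\times/R^\times$ for the class of $1$, note that $[1]$ is the identity of the group $\Gamma$, hence also the multiplicative unit of $\Gamma_{\max}$; that is, $[1]=1$ in $\Gamma_{\max}$. Proposition \ref{193} then states that for any $x\in K$ we have $v(x)\leq[1]$ if and only if $x\cdot 1^{-1}\in R$, i.e. $v(x)\leq 1$ if and only if $x\in R$ (the membership making sense because $R$ is embedded in $K$ as a subsemiring). Therefore $R=\{x\in K\mid v(x)\leq 1\}$.
\end{proof}
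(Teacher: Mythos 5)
Your proof is correct, and it is cleaner than the paper's. You recognize that the theorem is literally the $y=1$ instance of the ``$v(x)\leq[y]$ iff $xy^{-1}\in R$'' criterion already proved in Proposition \ref{193}, together with the trivial identification $[1]=1$ in $\Gamma_{\max}$. The paper's proof does not invoke that iff directly: instead it re-traces the decomposition argument, writing $x=\sum_{i\in I}x_i$ as a sum of units, using saturatedness of $R$ to get $x_i\in R$ when $x\in R$, and using that a sum in $\Gamma_{\max}$ is a max to get $[x_i]\leq 1$ for each $i$ when $v(x)\leq 1$. So the paper effectively re-proves the relevant special case of Proposition \ref{193} rather than citing it; your version eliminates that redundancy. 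Both are correct, and your observation at the end---that this theorem together with the preceding proposition gives the valuation-subsemiring/valuation dictionary---is exactly the intended point of the section.
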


\begin{proof}Adopt the notation of Proposition \ref{193}.  Let $x\in K$, and write $x=\sum_{i\in I}x_i$ as a sum of elements $x_i\in K^\times$ indexed by a finite set $I$.  Suppose $x\in R$.  Then $x_i\in R$ for all $i\in I$, so $[x_i]\leq 1$.  Then $v(x)=\sum_{i\in I}[x_i]\leq 1$.  Conversely, suppose $v(x)\leq 1$.  Then for each $i$, $[x_i]\leq 1$, so $x_i\in R$.  Then $x=\sum_{i\in I}x_i\in R$.
\end{proof}

Our next goal is to understand the ideals of $R$ and the fractional ideals of $K$.

\begin{thm}\label{150}Let $R$ be a valuation subsemiring of a unitgenerated idempotent semiring $K$. Let $\Gamma$ and $v$ be as in Proposition \ref{193}.  To a saturated subsemigroup $U\subseteq \Gamma_{\max}$, associate the saturated $R$-submodule $v^{-1}(U)$ of $K$.  This yields a bijective order-preserving correspondence between saturated subsemigroups of $\Gamma_{\max}$ and saturated $R$-submodules of $K$.  The inverse correspondence sends $M\subseteq K$ to $v(M)$.
\end{thm}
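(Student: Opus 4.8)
The plan is to show that the two assignments $U \mapsto v^{-1}(U)$ and $M \mapsto v(M)$ are well-defined, order-preserving, and mutually inverse. First I would verify well-definedness. For $U \subseteq \Gamma_{\max}$ a saturated subsemigroup, the preimage $v^{-1}(U)$ is automatically a subsemigroup since $v$ is a semigroup homomorphism (Proposition \ref{193}); it is an $R$-submodule because $x \in v^{-1}(U)$ and $r \in R$ give $v(rx) = v(r)v(x) \leq v(x) \in U$, and $U$ is saturated (hence downward closed by the Proposition characterizing saturated subsemigroups of idempotent semigroups), so $v(rx) \in U$; and $v^{-1}(U)$ is itself saturated by the same downward-closure argument applied directly in $K$. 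Conversely, for $M \subseteq K$ a saturated $R$-submodule, I must check $v(M)$ is a saturated subsemigroup of $\Gamma_{\max}$. That it is a subsemigroup is clear since $v$ is additive and $M$ is closed under addition. For downward closure: if $a \in v(M)$, say $a = v(x)$ with $x \in M$, and $b \leq a$ in $\Gamma_{\max}$, I need $b \in v(M)$; if $b = 0$ then $0 = v(0) \in v(M)$, and otherwise $b = [y]$ for some $y \in K^\times$, and $b \leq a$ means $v(x) \leq [y]$... wait, that's backwards — I want $b \le a$, i.e. $[y] \le v(x)$; hmm, let me instead use that $v$ is surjective and take $y \in K^\times \cup \{0\}$ with $v(y) = b$, and note $v(xy \cdot (\text{something}))$... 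The clean approach: since $b \leq a = v(x)$ and $b = [y]$, Proposition \ref{193} applied with the roles arranged correctly shows $y \in K^\times$ with $[y] \le v(x)$, equivalently (negating) $v(x) \not< [y]$ is not what I want; rather I should pick $z = $ the element with $v(z)=b$ and observe $z$ can be taken in $M$ because $z \preceq x$ in the valuation order forces $zx^{-1}\cdot(\dots)$. Concretely: choose $y\in K^\times$ with $v(y)=b$; then $v(yx^{-1}x)=b\le v(x)$, but more directly, since $b \le v(x)$ we have by Proposition \ref{193} that... I would instead argue $M$ saturated and $R$-submodule gives: $x \in M$, and for any $r\in R$, $rx \le x$ hence $v(rx) \le v(x)$ ranges over all of $\{c : c \le v(x)\} \cap v(K^\times\cup\{0\})$ as $r$ ranges over $R$ (using surjectivity of $K^\times \to \Gamma$ and that $v(r)\le 1$ characterizes $R$), and since $rx\in M$, we get all such $c \in v(M)$. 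This handles downward closure, hence $v(M)$ is saturated.

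Next, order-preservation in both directions is immediate: $U_1 \subseteq U_2$ gives $v^{-1}(U_1) \subseteq v^{-1}(U_2)$, and $M_1 \subseteq M_2$ gives $v(M_1) \subseteq v(M_2)$.

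The main content is that the correspondences are mutually inverse. For one direction, $v(v^{-1}(U)) = U$: the inclusion $\subseteq$ is trivial, and $\supseteq$ follows from surjectivity of $v$ (every $a \in U$ is $v(x)$ for some $x \in K$, and such $x$ lies in $v^{-1}(U)$). For the other direction, $v^{-1}(v(M)) = M$: again $\supseteq$ is trivial. For $\subseteq$, suppose $v(x) \in v(M)$, so $v(x) = v(x_0)$ for some $x_0 \in M$. If $v(x) = 0$ then $x = \sum_{\emptyset} = 0 \in M$. Otherwise write $x = \sum_{i\in I} x_i$ with $x_i \in K^\times$; for each $i$, $[x_i] \le v(x) = v(x_0)$, and since $M$ is a saturated $R$-submodule with $x_0 \in M$, I claim $x_i \in M$: indeed $x_i x_0^{-1} \in K^\times$ has $v(x_i x_0^{-1}) = [x_i][x_0]^{-1} \le 1$, so $x_i x_0^{-1} \in R$ by the preceding theorem, whence $x_i = (x_i x_0^{-1}) x_0 \in M$ since $M$ is an $R$-submodule. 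Then $x = \sum_i x_i \in M$ since $M$ is closed under addition. This is the step I expect to be the crux — it is exactly where unitgeneratedness of $K$ is used (to write $x$ as a finite sum of units) together with the $R$-module structure of $M$ and the characterization $R = \{x : v(x) \le 1\}$. The rest is bookkeeping.
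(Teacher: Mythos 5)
Your overall strategy matches the paper's: you show both assignments are well defined, establish $v(v^{-1}(U)) = U$ via surjectivity of $v$, and reduce the crux $v^{-1}(v(M)) \subseteq M$ to a unit-decomposition argument in $K$. The write-up meanders, but the final arguments for well-definedness, saturation of $v(M)$, and order-preservation are sound.

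However, there is a genuine gap in the crux step. You pick $x_0 \in M$ with $v(x_0) = v(x)$ and then form $x_i x_0^{-1}$, writing $v(x_i x_0^{-1}) = [x_i][x_0]^{-1}$. This presumes $x_0 \in K^\times$, but $x_0$ is merely an element of $M$ realizing the value $v(x)$; it need not be a unit, and $[x_0]$ is undefined otherwise. The paper closes exactly this gap: it decomposes the element of $M$ as a finite sum of units $\sum_k y_k$, observes each $y_k \in M$ by saturation of $M$ (each $y_k \le$ the sum), and then replaces the element of $M$ by the summand $y_{k_0}$ with maximal valuation, which lies in $K^\times \cap M$ and still has the same image under $v$. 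You should do the same: after choosing $x_0 \in M$ with $v(x_0) = v(x)$, decompose $x_0$ into units, use saturation of $M$ to see each summand is in $M$, and replace $x_0$ by the summand of largest valuation. With that repair, your proof is correct and is essentially the paper's proof, just with the roles of "which element gets decomposed into units" slightly rearranged (you decompose the target $x$ first, the paper decomposes the element of $M$ first and formulates the key point as a reusable claim).
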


\begin{proof}For the moment, let $U\subseteq \Gamma_{\max}$ be a saturated subsemigroup.  To see that $v^{-1}(U)$ is a saturated $R$-submodule, note first that it is clearly a saturated subsemigroup.  Let $x\in v^{-1}(U)$ and $r\in R$.  Then $v(r)\leq 1$, so $v(rx)\leq v(x)$.  Since $v(x)\in U$, we conclude that $v(rx)\in U$ so $rx\in v^{-1}(U)$, and $U$ is a saturated $R$-submodule.

Let $M\subseteq K$ be a saturated $R$-submodule.  Let $x\in M$ and $y\in K$ with $v(y)\leq v(x)$.  I claim that $y\in M$.  Write $x=\sum_{i\in I}x_i$ with $x_i\in K^\times$ and $I$ finite.  For each $i$, $x_i\in M$.  Choose $k\in I$ to maximize $v(x_k)$ and observe that $v(y)\leq v(x)=\max_{i\in I} v(x_i)=v(x_k)$.  Hence in proving this claim we may assume with out loss of generality that $x=x_k\in K^\times\cap M$.  Now $v(x)=[x]$, and by Proposition \ref{193}, $yx^{-1}\in R$.  Since $M$ is an $R$-submodule, $y=(yx^{-1})x\in M$, as claimed.  

Now let $U$ be the image of $M$ under $v$.  $U\subseteq\Gamma_{\max}$ is a subsemigroup because $\Gamma_{\max}$ is totally ordered.  Suppose $u\in U$ and $t\leq u$.  We may choose $x\in M$ so $v(x)=u$.  Since $v$ is surjective, we may also choose $y\in K$ so $t=v(y)\leq v(x)$.  By the claim above, $y\in M$ so $t\in U$.  Hence $U$ is saturated.  Clearly $M\subseteq v^{-1}(v(M))=v^{-1}(U)$.  Let $y\in v^{-1}(U)$.  Then $v(y)\in U$ so there exists $x\in M$ with $v(y)=v(x)$.  By the claim above, $y\in M$.  Hence $M=v^{-1}(U)$, so the correspondence in the statement of the theorem is surjective.  Since $v$ is surjective, $v(v^{-1}(U))=U$.  Hence if $v^{-1}(U)=v^{-1}(V)$ then $U=V$, so the construction is injective.  The correspondence is order-preserving because it is given by taking the preimage.
\end{proof}

In particular, we may describe the saturated ideals of $R$.

\begin{cor}\label{163}Let $R$ be a valuation subsemiring of a unitgenerated idempotent semiring $K$. Let $\Gamma$ and $v$ be as in Proposition \ref{193}.  There is a bijective correspondence between saturated ideals of $R$ and saturated subsemigroups of $\{x\in \Gamma_{\max}\mid x\leq 1\}$.
\end{cor}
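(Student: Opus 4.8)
The plan is to deduce this from Theorem \ref{150} by restricting the bijection established there to the appropriate sub-poset on each side. Recall that Theorem \ref{150} gives an order-preserving bijection between saturated subsemigroups $U$ of $\Gamma_{\max}$ and saturated $R$-submodules $M$ of $K$, with $U\mapsto v^{-1}(U)$ and inverse $M\mapsto v(M)$. First I would observe that, since $v$ is surjective and $R=v^{-1}(\{x\in\Gamma_{\max}\mid x\leq 1\})$, a saturated $R$-submodule $M$ of $K$ satisfies $M\subseteq R$ if and only if $v(M)\subseteq\{x\in\Gamma_{\max}\mid x\leq 1\}$: the forward implication is immediate, and conversely if $v(M)\subseteq\{x\leq 1\}$ then $M\subseteq v^{-1}(v(M))\subseteq v^{-1}(\{x\leq 1\})=R$. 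Hence the bijection of Theorem \ref{150} restricts to an order-preserving bijection between saturated subsemigroups of $\Gamma_{\max}$ contained in $\{x\leq 1\}$ and saturated $R$-submodules of $K$ contained in $R$. One should also note that, because $\{x\in\Gamma_{\max}\mid x\leq 1\}$ is itself a saturated subsemigroup of $\Gamma_{\max}$ (if $a\leq 1$ and $a+b\leq 1$ then $b\leq a+b\leq 1$), a saturated subsemigroup of $\{x\leq 1\}$ is the same thing as a saturated subsemigroup of $\Gamma_{\max}$ that happens to be contained in $\{x\leq 1\}$.

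It then remains to identify saturated $R$-submodules of $K$ contained in $R$ with saturated ideals of $R$. Any $R$-submodule of $K$ contained in $R$ is precisely an ideal of $R$, so the only point needing care is the compatibility of the two notions of saturation, i.e. saturation inside $R$ versus saturation inside $K$. Since $R$ is by definition a saturated subsemigroup of $K$, I would check that a subsemigroup $I\subseteq R$ is saturated in $R$ if and only if it is saturated in $K$: if $I$ is saturated in $R$ and $x\in I$, $y\in K$ with $x+y\in I$, then $x+y\in R$ and $x\in R$ force $y\in R$ by saturation of $R$ in $K$, whence $y\in I$ by saturation of $I$ in $R$; the converse is trivial. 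Combining the two steps yields the asserted bijective correspondence, and order-preservation is inherited directly from Theorem \ref{150}.

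The bulk of the work — verifying that $v(M)$ and $v^{-1}(U)$ are genuinely objects of the required type — is already carried out in Theorem \ref{150}, so there is no substantial obstacle here; the only place demanding attention is the bookkeeping around the word \emph{saturated}, namely reading ``saturated ideal of $R$'' in the statement as ``saturated as a subsemigroup of $R$'' and reconciling this with saturation in $K$ as above.
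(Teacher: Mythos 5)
Your argument is correct and is essentially the same as the paper's: restrict the bijection of Theorem \ref{150} to the sub-posets on each side, using that $R=v^{-1}(\{x\leq 1\})$ and that $v$ is surjective, and identify saturated $R$-submodules of $K$ contained in $R$ with saturated ideals of $R$. You simply spell out the bookkeeping (in particular the compatibility of saturation in $R$ versus in $K$, which uses that $R$ is saturated in $K$) that the paper leaves implicit.
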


\begin{proof}Saturated ideals of $R$ are the same as saturated $R$-submodules $X\subseteq K$ such that $X\subseteq R$.  Under the correspondence of Theorem \ref{150}, these correspond to saturated subemigroups $Y\subseteq \Gamma_{\max}$ such that $Y\subseteq v(R)=\{x\in \Gamma_{\max}\mid x\leq 1\}$.
\end{proof}

\begin{cor}Let $R$ be a valuation subsemiring of a unitgenerated idempotent semiring $K$.  The ideals of $R$ are totally ordered.
\end{cor}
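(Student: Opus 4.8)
The plan is to reduce the statement to a triviality about totally ordered sets via Corollary \ref{163}. That corollary, together with Theorem \ref{150}, supplies an inclusion-preserving bijection, with inclusion-preserving inverse, between the (saturated) ideals of $R$ and the saturated subsemigroups of $\Delta := \{x\in\Gamma_{\max}\mid x\leq 1\}$; in particular it is an isomorphism of posets. So the chain condition on ideals of $R$ is equivalent to: the saturated subsemigroups of $\Delta$ are linearly ordered by inclusion.

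The first step is to identify the saturated subsemigroups of $\Delta$ with the down-sets of $\Delta$, i.e.\ the subsets $N$ with the property that $x\leq y$ and $y\in N$ imply $x\in N$. The proposition following Definition \ref{105b} already gives that a down-set which is a subsemigroup is saturated; conversely, if $N$ is a saturated subsemigroup and $x\leq y\in N$, then $x+y=y\in N$, so saturation (applied with the roles $``x"=y$, $``y"=x$) forces $x\in N$; and finally, since $\Gamma_{\max}$ is totally ordered, any down-set in $\Delta$ is automatically a subsemigroup because $x+y=\max(x,y)\in\{x,y\}$. The second step is the elementary observation that the down-sets of any totally ordered set form a chain: if $S\not\subseteq T$ are down-sets and $s\in S\setminus T$, then every $t\in T$ satisfies $t\leq s$ (otherwise $s<t\in T$ would put $s\in T$), whence $t\leq s\in S$ gives $t\in S$, so $T\subseteq S$. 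Transporting this back along Corollary \ref{163} finishes the proof.

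There is no genuine obstacle here; the only points that need a word of justification are the two halves of the identification ``saturated subsemigroup of a totally ordered idempotent semigroup $=$ down-set'', and both are immediate once the order is total. Alternatively one can argue directly without invoking Theorem \ref{150}: given saturated ideals $I\not\subseteq J$ of $R$, pick $x\in I\setminus J$; for each $y\in J$ the total order on $\Gamma$ gives $v(x)\leq v(y)$ or $v(y)\leq v(x)$, and the first option contradicts the key claim in the proof of Theorem \ref{150} applied to the saturated $R$-submodule $J$, so $v(y)\leq v(x)$ holds for all $y\in J$; applying that same claim to $I$ then yields $y\in I$ for all $y\in J$, i.e.\ $J\subseteq I$.
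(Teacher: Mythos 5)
Your proof is correct and takes essentially the same route as the paper: both reduce via Corollary \ref{163} to showing that saturated subsemigroups of $\{x\in\Gamma_{\max}\mid x\leq 1\}$ form a chain, and both establish this by the standard ``down-sets of a totally ordered set are linearly ordered by inclusion'' argument (the paper carries it out directly, without first naming the identification of saturated subsemigroups with down-sets, but the content is identical). Your explicit restriction to \emph{saturated} ideals is in fact what the paper's argument proves as well.
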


\begin{proof}Let $\Gamma$ be as in Proposition \ref{193}.  By Corollary \ref{163}, it suffices to show that the saturated subsemigroups of $\{x\in \Gamma_{\max}\mid x\leq 1\}$ are totally ordered.  Let $E=\{x\in \Gamma_{\max}\mid x\leq 1\}$.  Let $I,J\subseteq E$ be saturated subsemigroups.  We wish to show that either $I\subseteq J$ or $J\subseteq I$.  So suppose $J\not\subseteq I$.  Choose $x\in I$ with $x\not\in J$.  Let $y\in J$.  If $x\leq y$ then $x$ would be in $J$, so we have $x\not\leq y$.  Since $E$ is totally ordered, $y\leq x$.  Hence $y\in I$.  Since this holds for all $y\in J$, we have $I\subseteq J$.
\end{proof}

\section{Radical congruences on simple idempotent semirings}

Let $K$ be a simple idempotent semiring and $x,y\in K$.  Our goal is to determine when $v(x)=v(y)$ for every valuation $v$.  This will be used later, when we write the integral closure as an intersection of valuation semirings.  The below definitions come from \cite{joomin}.

\begin{defn}Let $R$ be an idempotent semiring.  A congruence $\sim$ on $R$ is \emph{prime} if $xy+zw\sim xw+zy$ implies that either $x\sim z$ or $y\sim w$ and if $0\not\sim 1$.  $\sim$ is \emph{QC} (or quotient cancellative) if $R/\mathnormal{\sim}$ is cancellative.  $\sim$ is \emph{radical} if it is the intersection of prime congruences.  $R$ is a \emph{domain} if equality is a prime congruence.
\end{defn}

\begin{lem}Let $f:R\rightarrow A$ be a morphism of idempotent semirings.  Let $\sim$ be a congruence on $A$.  Then the congruence $\equiv=f^{-1}(\sim)$ on $R$ is prime if $\sim$ is prime.  When $f$ is surjective and $\equiv$ is prime, then $\sim$ is prime.  In particular, if $\equiv$ is any congruence on $R$, then $R/\equiv$ is a domain if and only $\equiv$ is prime.
\end{lem}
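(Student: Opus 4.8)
The plan is to check everything directly from the definitions, deducing the third statement from the first two by applying them to a quotient map. The only preliminary observations needed are that a morphism of (idempotent) semirings satisfies $f(0)=0$ and $f(1)=1$, and that $\equiv\,=f^{-1}(\sim)$ — defined by $a\equiv b$ iff $f(a)\sim f(b)$ — is indeed a congruence on $R$: reflexivity, symmetry and transitivity are inherited from $\sim$, and compatibility with $+$ and $\cdot$ follows because $f$ respects both operations.

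For the first claim, suppose $xy+zw\equiv xw+zy$ in $R$. Applying $f$ and using that it is a homomorphism gives $f(x)f(y)+f(z)f(w)\sim f(x)f(w)+f(z)f(y)$ in $A$; primeness of $\sim$ then forces $f(x)\sim f(z)$ or $f(y)\sim f(w)$, i.e.\ $x\equiv z$ or $y\equiv w$. Moreover $0\not\equiv 1$, since $f(0)=0\not\sim 1=f(1)$. Hence $\equiv$ is prime. For the second claim, assume $f$ is surjective and $\equiv$ prime, and suppose $ab+cd\sim ad+cb$ in $A$. Choose lifts $x,y,z,w\in R$ with $f(x)=a$, $f(y)=b$, $f(z)=c$, $f(w)=d$; then $f(xy+zw)=ab+cd\sim ad+cb=f(xw+zy)$, so $xy+zw\equiv xw+zy$, and primeness of $\equiv$ gives $x\equiv z$ or $y\equiv w$, i.e.\ $a\sim c$ or $b\sim d$. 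If $0\sim 1$ held, then $0=f(0)\sim f(1)=1$ would give $0\equiv 1$, contradicting primeness of $\equiv$; so $0\not\sim 1$ and $\sim$ is prime.

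For the final statement, apply the above to the quotient map $q\colon R\to R/{\equiv}$, which is surjective, taking $\sim$ to be the equality congruence on $R/{\equiv}$; then $q^{-1}(\sim)$ is precisely $\equiv$. By definition $R/{\equiv}$ is a domain iff the equality congruence on it is prime, and the two claims above (used in the two directions, with $f=q$) show this holds iff $\equiv\,=q^{-1}(\sim)$ is prime.

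I do not expect a genuine obstacle here; the argument is a sequence of unwindings of the definition of ``prime.'' The only points that require a moment's attention are the bookkeeping around the clause $0\not\sim 1$ — which transfers in both directions precisely because semiring morphisms preserve $0$ and $1$ — and the (routine) verification that the pullback of a congruence along a homomorphism is again a congruence, which underlies the fact that $q^{-1}(\text{equality})$ is the congruence we started with.
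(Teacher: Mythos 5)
Your proof is correct and takes essentially the same route as the paper's: apply $f$ to transfer the prime condition forward, lift along a surjection to transfer it backward, and derive the domain characterization by taking $\sim$ to be equality on the quotient via the quotient map. The one place you are more careful than the paper is in explicitly tracking the clause $0\not\sim 1$ in both directions (and noting it transfers because semiring morphisms preserve $0$ and $1$); the paper's proof silently omits this check, so your version is, if anything, slightly more complete.
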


\begin{proof}The last statement comes from taking $\sim$ to be equality in the first two statements, so we need only prove them.  Suppose $\sim$ is prime.  Suppose $x,y,z,w\in R$ satisfy $xy+zw\equiv xw+zy$.  Then $f(x)f(y)+f(z)f(w)\sim f(x)f(w)+f(z)f(y)$.  Since $\sim$ is prime, either $f(x)\sim f(z)$ or $f(y)\sim f(w)$ and hence either $x\equiv z$ or $y\equiv w$.  Conversely, suppose $\equiv$ is prime and $f$ is surjective.  Let $x,y,z,w\in A$ be such that $xy+zw\sim xw+zy$.  Pick $\hat{x},\hat{y},\hat{z},\hat{w}\in R$ to be lifts (e.g. $f(\hat{x})=x$).  Then $\hat{x}\hat{y}+\hat{z}\hat{w}\equiv\hat{x}\hat{w}+\hat{z}\hat{y}$ so $\hat{x}\equiv\hat{z}$ or $\hat{y}\equiv\hat{w}$.  This implies $x\sim z$ or $y\sim w$ so $\sim$ is prime.
\end{proof}

The following two theorems come from \cite{joomin}.

\begin{thm}\label{248}QC congruences are radical.
\end{thm}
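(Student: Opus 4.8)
The plan is to show that a QC congruence $\sim$ on an idempotent semiring $R$ equals the intersection of the prime congruences containing it, which since $\sim$ is itself QC and the quotient by a prime is a domain, reduces to the following assertion: if $R$ is cancellative (i.e. equality is QC), then equality is the intersection of all prime congruences on $R$. Equivalently, I want to show that for any pair $a\neq b$ in a cancellative idempotent semiring $R$, there is a prime congruence $\sim$ with $a\not\sim b$. So first I would reduce to this separation statement by pulling back along the quotient map $R\to R/\mathnormal{\sim}$, using the earlier Lemma on pullbacks of prime congruences (a pullback of a prime is prime, and the intersection pulls back to the intersection), noting that a congruence is the intersection of the primes above it iff the corresponding property holds in the quotient.

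Next I would fix $a\neq b$ in the cancellative idempotent semiring $R$ and aim to build a prime congruence separating them. The natural move is a Zorn's lemma argument: consider the poset of congruences $\equiv$ on $R$ with $a\not\equiv b$, ordered by inclusion; this is nonempty (equality is in it) and closed under unions of chains (a directed union of congruences is a congruence, and if none identifies $a$ with $b$ then neither does the union), so it has a maximal element $\sim$. The crux is then to prove that a congruence $\sim$ maximal with respect to not identifying a fixed pair $a,b$ is automatically prime. Here one uses cancellativity of $R$ — note $R/\mathnormal{\sim}$ need not a priori be cancellative, so one may need to first arrange that $\sim$ is also QC, perhaps by instead running Zorn over QC congruences not identifying $a$ and $b$ (the QC condition is preserved under directed unions), and then showing maximality among QC congruences with $a\not\sim b$ forces primality.

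To see maximality forces primality: suppose $xy+zw\sim xw+zy$ but $x\not\sim z$ and $y\not\sim w$. By maximality of $\sim$, the congruence generated by $\sim$ together with the pair $(x,z)$ identifies $a$ and $b$, and likewise for $(y,w)$. The task is to derive a contradiction by combining these two forced identifications with the hypothesis $xy+zw\sim xw+zy$, using the QC/cancellative structure to "multiply out" — this is where one expects to manipulate expressions of the form: from $x\sim z$ one would get $a\sim b$ via some chain, and the determinant-like relation $xy+zw\sim xw+zy$ should let one splice the $(x,z)$-chain and the $(y,w)$-chain together. Controlling the congruence generated by adding one pair, in the idempotent (and QC) setting, is the delicate computational point, and is the main obstacle; I expect it to rely on an explicit description of the generated congruence (transitive–reflexive–symmetric closure of translates and scalings of the added pair, then QC-saturation) together with the cancellativity of the ambient $R$ to cancel the common factors that appear.

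Granting the separation statement, the theorem follows: given any QC congruence $\sim$ on $R$ and any $a\not\sim b$, apply the separation statement in $R/\mathnormal{\sim}$ to the images of $a,b$ to get a prime congruence on $R/\mathnormal{\sim}$ not identifying them, pull it back to a prime congruence on $R$ containing $\sim$ and not identifying $a,b$; hence the intersection of all primes containing $\sim$ does not identify $a,b$, and since that intersection visibly contains $\sim$, it equals $\sim$. Thus $\sim$ is radical.
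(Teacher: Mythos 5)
The paper does not prove this theorem; it is quoted from \cite{joomin} with no in-paper argument, so there is nothing internal to compare against. On its own terms, your outline captures the right preliminary reductions: passing to $R/\mathnormal{\sim}$, which is cancellative; using the pullback lemma to transport prime congruences; and observing that it suffices to show that in a cancellative idempotent semiring any two distinct elements are separated by some prime congruence. The Zorn set-up is also fine as far as it goes (directed unions of congruences are congruences, QC is preserved under directed unions, and avoiding a fixed pair is preserved under directed unions). But you explicitly leave the crux --- that a congruence maximal for avoiding $(a,b)$ is prime --- as ``the main obstacle,'' and that step is the entire content of the theorem; everything preceding it is routine bookkeeping. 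As written, this is a plan with a hole where the proof should be, not a proof.

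There is also a concrete reason to doubt that the particular Zorn set-up you chose can be pushed through. Taking a congruence maximal among those not containing the single pair $(a,b)$ is the analogue of taking an ideal maximal among those not containing a fixed element $s$, which need not be prime; the classical fix is to avoid the whole multiplicative set $\{1,s,s^2,\dots\}$. The corresponding device in \cite{joomin} is the \emph{twisted power} $\alpha^n$ of a pair $\alpha=(a,b)$ (the two halves of the expansion of $(a+b)^n$), and Zorn is applied to congruences avoiding every $\alpha^n$ rather than just $\alpha$. Cancellativity is what guarantees that this family is nonempty when $a\neq b$ --- that no twisted power of $(a,b)$ degenerates to an equality --- and it is this lemma, plus the computation that maximality for avoiding all twisted powers forces primality, that is absent from your sketch. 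Your variant of running Zorn over QC congruences does at least guarantee a cancellative maximal quotient, but nothing in your sketch shows that quotient is totally ordered (equivalently a domain, by Theorem~\ref{193b}); without that you do not obtain a prime. To complete the argument you would need to either reproduce the twisted-power machinery or find a substitute for it.
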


\begin{thm}\label{193b}An idempotent semiring is a domain if and only if it is totally ordered and cancellative.
\end{thm}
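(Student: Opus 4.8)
**Proof proposal for Theorem \ref{193b}: an idempotent semiring is a domain if and only if it is totally ordered and cancellative.**

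The plan is to prove both directions by unwinding the definition of a domain, namely that equality is a prime congruence: for all $x,y,z,w$, the relation $xy+zw = xw+zy$ forces $x=z$ or $y=w$, together with $0\neq 1$.

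For the forward direction, suppose $R$ is a domain. To get cancellativity, suppose $ab = ac$ with $a\neq 0$; I would apply primeness of equality to the instance $ab + 0\cdot c = ac + 0\cdot b$, i.e.\ take $(x,y,z,w) = (a,b,0,c)$, so $xy+zw = ab = ac = xw+zy$, whence $a = 0$ or $b = c$; since $a\neq 0$ this gives $b=c$. (One should also note $R$ has no zero divisors, but cancellativity as just stated is what is needed.) To get that $R$ is totally ordered, recall that in an idempotent semiring the canonical order is $x\leq y\iff x+y=y$, and totally ordered means any two elements are comparable. Given $x,y\in R$, I would feed the identity $x\cdot y + (x+y)\cdot(x+y) = x\cdot(x+y) + (x+y)\cdot y$ — or a similarly engineered instance — into the primeness hypothesis. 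Concretely, using idempotency, $x(x+y) + y(x+y) = (x+y)(x+y) = x+y$, and one checks $xy + (x+y)(x+y) = xy + x+y$ while $x(x+y)+(x+y)y = x+xy+xy+y = x+y+xy$; so these two are equal, and primeness applied to $(x, y, x+y, x+y)$ forces $x = x+y$ or $y = x+y$, i.e.\ $y\leq x$ or $x\leq y$. So $R$ is totally ordered.

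For the reverse direction, suppose $R$ is totally ordered and cancellative; I must show equality is prime. Given $xy+zw = xw+zy$, by total order I may assume without loss of generality $x\leq z$ (the case $z\leq x$ is symmetric), so $x+z = z$. The goal is to deduce $y=w$. Multiplying $x+z=z$ by $y$ and by $w$ gives $xy+zy = zy$ and $xw+zw = zw$. Adding $zy$ to both sides of the hypothesis and using these relations should let me collapse the equation: $xy+zw+zy = xw+zy+zy = xw+zy$, and the left side is $zw + (xy+zy) = zw + zy = z(w+y)$; meanwhile I want to compare with $z y$-augmented forms of the right side to arrive at $z(w+y) = z\cdot w$ or $z(w+y)=z\cdot y$ after a parallel manipulation, and then cancel $z$ (legitimate once one observes $z\neq 0$, else $x\leq z=0$ forces $x=0$ too and the conclusion is immediate, or one handles the degenerate case separately). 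Cancelling $z$ yields $w+y = w$ or $w+y=y$, i.e.\ $y\leq w$ or $w\leq y$; combining this with a symmetric inequality obtained by adding $zw$ instead should pin down $y=w$. Finally $0\neq 1$ holds in any totally ordered idempotent semiring since $0$ is the bottom element and a semiring with $0=1$ is the zero semiring, which one excludes (or which is not totally ordered in the required nondegenerate sense).

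The main obstacle is the bookkeeping in the reverse direction: turning the single equation $xy+zw = xw+zy$, under the extra hypothesis $x+z=z$, into something to which cancellativity applies cleanly. The right additive manipulation is the crux — one must find the combination of adding $zy$, $zw$, and using idempotency that isolates a product of $z$ with $(y+w)$, $y$, or $w$ on each side. I expect this to be a short but slightly delicate computation, and I would also take care over the degenerate cases $z=0$ and the (non)triviality condition $0\neq 1$ so that the cancellation step is always valid. The forward direction, by contrast, is routine once the right test-instances of primeness are chosen.
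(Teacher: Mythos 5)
First, note that the paper does not supply a proof of Theorem \ref{193b}; it is quoted from \cite{joomin}, so there is no in-text argument to compare yours against. Judged on its own merits, your plan is right but the write-up contains a computational error in the forward direction and a genuine gap in the reverse direction. In the forward direction, the test instance $(x,y,z,w)=(x,y,x+y,x+y)$ does satisfy $xy+zw=xw+zy$ (both sides equal $x^2+xy+y^2$), and primeness then forces $x=x+y$ or $y=x+y$, which is exactly totality. But your verification is wrong: $(x+y)(x+y)=x^2+xy+y^2$, \emph{not} $x+y$, and $x(x+y)=x^2+xy$, \emph{not} $x+xy$. Your two erroneous simplifications happen to agree with each other, so you land on the correct conclusion, but the intermediate claims as stated are false identities in an idempotent semiring.

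The reverse direction is not actually completed, and you say so yourself. After using $x+z=z$ and adding $zy$, you reach $z(y+w)=xw+zy$, but there is no visible factorization of $xw+zy$ through $z$ that permits cancellation, so the argument stalls. The missing ingredient is the elementary fact that in a \emph{totally ordered} idempotent semiring, $a+b\in\{a,b\}$ for all $a,b$; this is what lets the hypothesis collapse. Concretely: take WLOG $x\leq z$ and, further, WLOG $y\leq w$ (the case $w\leq y$ is symmetric). Then $xy\leq xw\leq zw$ and $xy\leq zy\leq zw$, so $xy+zw=zw$ and the hypothesis reads $zw=xw+zy$. Since $xw+zy\in\{xw,zy\}$, either $zw=xw$ (if $w\neq 0$ cancel to get $x=z$; if $w=0$ then $y\leq w=0$ gives $y=w$) or $zw=zy$ (if $z\neq 0$ cancel to get $y=w$; if $z=0$ then $x\leq z=0$ gives $x=z$). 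This, plus the symmetric case, finishes primeness. Finally, your handling of $0\neq 1$ needs a correction: the zero semiring \emph{is} totally ordered and (vacuously) cancellative but is not a domain, so the theorem tacitly excludes it; the claim that a semiring with $0=1$ ``is not totally ordered in the required nondegenerate sense'' is not accurate unless one builds that convention into the definition.
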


\begin{cor}Let $\sim$ be a congruence on an idempotent semiring $R$.  $\sim$ is prime if and only if $R/\mathnormal{\sim}$ is totally ordered and cancellative.  In particular, prime congruences are QC.
\end{cor}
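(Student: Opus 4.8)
The plan is to read this off directly from the preceding Lemma together with Theorem \ref{193b}, using Theorem \ref{248} only for the supplementary remark about radicality. First I would apply the last clause of the Lemma, taking the congruence there to be $\sim$ itself: it asserts that $R/\mathnormal{\sim}$ is a domain if and only if $\sim$ is prime. Then I would invoke Theorem \ref{193b}, which characterizes domains among idempotent semirings as exactly the totally ordered cancellative ones. Since $R/\mathnormal{\sim}$ is a quotient of the idempotent semiring $R$, it is itself idempotent, so Theorem \ref{193b} applies to it; composing the two equivalences gives that $\sim$ is prime if and only if $R/\mathnormal{\sim}$ is totally ordered and cancellative, which is the main assertion.

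For the final sentence, suppose $\sim$ is prime. By the equivalence just established, $R/\mathnormal{\sim}$ is in particular cancellative, and this is by definition what it means for $\sim$ to be QC. One may add that, combined with Theorem \ref{248}, this shows every prime congruence is radical, which is consistent with the definition of radical congruences as intersections of primes.

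The argument is essentially a matter of unwinding definitions, so I do not anticipate a real obstacle. The only points that deserve a moment's care are (i) checking that $R/\mathnormal{\sim}$ is again an idempotent semiring, so that Theorem \ref{193b} is legitimately applicable to it, and (ii) observing that ``QC'' was defined to mean precisely that the quotient is cancellative, so that the ``in particular'' clause is immediate rather than requiring additional work.
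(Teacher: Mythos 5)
Your proof is correct and takes exactly the approach the paper intends: the corollary is stated without a proof precisely because it is the immediate composition of the last clause of the preceding Lemma ($R/\mathnormal{\sim}$ is a domain iff $\sim$ is prime) with Theorem \ref{193b} (domains are exactly the totally ordered cancellative idempotent semirings), together with the definition of QC. Your two flagged points of care are also exactly the right ones to check.
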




We will also need a converse to Theorem \ref{248}.  There is no reason to expect this is true in general, but it is when $R$ is simple.

\begin{lem}Let $R$ be a simple idempotent semiring.  Radical congruences on $R$ are QC.
\end{lem}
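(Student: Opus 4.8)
The plan is to show that a radical congruence $\sim$ on a simple idempotent semiring $R$ is QC, i.e.\ that $R/\mathord{\sim}$ is cancellative. Since $\sim$ is radical, it is an intersection of prime congruences $\sim_\alpha$, and $R/\mathord{\sim}$ embeds into the product $\prod_\alpha R/\mathord{\sim_\alpha}$. Each $R/\mathord{\sim_\alpha}$ is totally ordered and cancellative by Theorem \ref{193b}, so the product of the underlying multiplicative monoids is cancellative except possibly at $0$: the obstruction is that a product of cancellative semirings can have zero divisors. Concretely, cancellativity of $R/\mathord{\sim}$ amounts to: whenever $ab \sim ac$ with $a \not\sim 0$, one has $b \sim c$. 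Fixing such $a,b,c$, for each prime $\sim_\alpha$ we have $ab \sim_\alpha ac$; if $a \not\sim_\alpha 0$ then cancellativity of the domain $R/\mathord{\sim_\alpha}$ gives $b \sim_\alpha c$, but if $a \sim_\alpha 0$ we get no information, and we must rule this case out or work around it.

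The key point where simplicity enters is exactly here. I would argue that if $a \not\sim 0$ then $a \not\sim_\alpha 0$ for \emph{every} $\alpha$. Indeed, consider the set $\mathfrak{a} = \{x \in R \mid x \sim_\alpha 0\}$ for a fixed $\alpha$. Since $\sim_\alpha$ is a prime (hence QC) congruence with $0 \not\sim_\alpha 1$, this set is a saturated ideal of $R$ not containing $1$: it is closed under addition and under multiplication by arbitrary elements because $0 \sim_\alpha 0$ and $\sim_\alpha$ is a congruence, it is saturated because in the totally ordered cancellative semiring $R/\mathord{\sim_\alpha}$ the element $0$ is the bottom and $\{0\}$ is a saturated subsemigroup, and $1 \notin \mathfrak{a}$ since $0 \not\sim_\alpha 1$. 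Because $R$ is simple its only saturated ideals are $0$ and $R$, so $\mathfrak{a} = 0$; that is, $x \sim_\alpha 0$ forces $x = 0$. In particular $a \not\sim 0$ (i.e.\ $a \neq 0$, using $\alpha$ ranges over a nonempty set — note a radical congruence is by definition an intersection of primes, so at least one exists, giving $0 \not\sim 1$ and hence $R \neq 0$) implies $a \neq 0$, hence $a \not\sim_\alpha 0$ for all $\alpha$.

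With this in hand the proof finishes quickly: given $ab \sim ac$ with $a \not\sim 0$, for each $\alpha$ we have $ab \sim_\alpha ac$ and $a \not\sim_\alpha 0$, so by Theorem \ref{193b} (cancellativity of the domain $R/\mathord{\sim_\alpha}$) we get $b \sim_\alpha c$. As this holds for every prime in the defining intersection, $b \sim c$. Hence $R/\mathord{\sim}$ is cancellative and $\sim$ is QC.

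I expect the main obstacle to be the verification that $\{x \in R \mid x \sim_\alpha 0\}$ is genuinely a \emph{saturated} ideal, so that simplicity can be invoked — in particular checking saturation, which translates (via the proposition characterizing saturated subsemigroups of an idempotent semigroup) to: $x \leq y$ and $y \sim_\alpha 0$ imply $x \sim_\alpha 0$. This follows because $x \leq y$ means $x + y = y$, so $x + y \sim_\alpha 0$, and in the cancellative totally ordered semiring $R/\mathord{\sim_\alpha}$ one has $\bar{x} \leq \bar{x} + \bar{y} = \bar{0}$, forcing $\bar{x} = \bar{0}$ since $\bar{0}$ is the least element. Everything else is routine bookkeeping about intersections of congruences.
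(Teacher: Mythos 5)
Your proof is correct and follows essentially the same route as the paper: pass to each prime congruence $\equiv$ in the intersection defining $\sim$, use cancellativity of the quotient $R/\mathord{\equiv}$ (a domain by Theorem \ref{193b}), and invoke simplicity to force the kernel of $R\to R/\mathord{\equiv}$ to be $0$, thereby ruling out $a\equiv 0$. The only difference is that you verify explicitly (via saturation of $\{0\}$ in a totally ordered quotient) that this kernel is a saturated ideal, whereas the paper cites this as a standing fact about kernels of semiring homomorphisms; the substance of the argument is identical.
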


\begin{proof}Suppose $\sim$ is radical and that $sx=sy$ for $s\neq 0$.  Then for each prime congruence $\equiv$ containing $\sim$, we have $sx\equiv sy$.  Since $\equiv$ is QC we conclude either $s\equiv 0$ or $x\equiv y$.  If $s\equiv 0$, then $s$ is in the kernel of $R\rightarrow R/\equiv$, which is a proper saturated ideal.  Since $R$ is simple, this implies $s=0$, a contradiction.  So we have $x\equiv y$ for each prime congruence containing $\sim$.  But $\sim$ is the intersection of such congruences so $x\sim y$.  Hence $\sim$ is QC.
\end{proof}

\begin{defn}Let $R$ be an idempotent semiring.  Its \emph{reduction}, \emph{$R_{\mathrm{red}}$} is the quotient of $R$ by the intersection of all prime congruences.  $R$ is \emph{reduced} if the intersection of all prime congruences is equality.
\end{defn}

\begin{cor}\label{215}Let $R$ be a simple idempotent semiring.  A quotient $R/\mathnormal{\sim}$ of $R$ is reduced if and only if $R/\mathnormal{\sim}$  is cancellative.  
\end{cor}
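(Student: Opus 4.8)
The plan is to reduce both sides of the equivalence to a statement about the equality congruence on $\bar R := R/{\sim}$ and then invoke the results already in hand. Write $\pi\colon R\to\bar R$ for the quotient map. The first step is to observe that $\bar R$ is again simple: given $\bar x\in\bar R$, choose a lift $x\in R$; since $R$ is simple, Proposition~\ref{87} provides $y\in R$ with $xy\ge 1$. Any semiring homomorphism preserves the order relation (since $a\le b$ means $a+b=b$, and $\pi(1)=1$), so applying $\pi$ gives $\bar x\,\pi(y)\ge 1$ in $\bar R$; by Proposition~\ref{87} again, $\bar R$ is simple.

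Next I would record the elementary observation that $\bar R$ is reduced if and only if the equality congruence on $\bar R$ is radical. Indeed, ``$\bar R$ reduced'' says precisely that the intersection of all prime congruences on $\bar R$ is equality; since equality is the finest congruence, it is automatically contained in that intersection, so the two coincide exactly when equality can be written as an intersection of prime congruences, i.e.\ exactly when the equality congruence is radical.

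With these two reductions the corollary follows immediately. If $\bar R$ is reduced, then equality on $\bar R$ is radical; since $\bar R$ is simple, the lemma proved above that radical congruences on a simple idempotent semiring are QC shows that equality on $\bar R$ is QC, which is to say that $\bar R$ is cancellative. Conversely, if $\bar R$ is cancellative, then equality on $\bar R$ is QC, hence radical by Theorem~\ref{248}, so $\bar R$ is reduced. I do not expect a genuine obstacle: the entire content sits in Theorem~\ref{248} and the lemma that on a simple semiring radical congruences are QC, and the only points requiring a little care are that simplicity passes to quotients and that ``reduced'' unwinds to ``the equality congruence is radical.''
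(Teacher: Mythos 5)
Your proof is correct. The one thing worth flagging is that it takes a slightly different path from what the paper's placement of the corollary suggests. The paper sets things up so that one works with the congruence $\sim$ directly on $R$: ``$R/{\sim}$ cancellative'' is literally the definition of ``$\sim$ is QC,'' and ``$R/{\sim}$ reduced'' unwinds to ``$\sim$ is radical'' via the earlier lemma that $f^{-1}$ gives a bijection between prime congruences on $R/{\sim}$ and prime congruences on $R$ containing $\sim$ (together with the fact that preimage commutes with intersection). Then Theorem~\ref{248} and the lemma preceding Corollary~\ref{215} finish the job with $R$ simple. You instead transport everything to the quotient $\bar R = R/{\sim}$: you prove that simplicity passes to quotients (correct, and a nice fact the paper does not isolate), observe that ``reduced'' is the same as ``equality is radical,'' and then apply the QC--radical equivalence to the equality congruence on $\bar R$. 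The trade is that your route avoids invoking the correspondence of prime congruences under a quotient but requires the stability-of-simplicity lemma; either way the real content is Theorem~\ref{248} plus the lemma that radical implies QC on simple semirings, and both routes use these correctly.
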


We have the following result.

\begin{prop}\label{225}Let $R$ be an idempotent semiring.  Then $R_\mathrm{red}$ is reduced, and every surjective homomorphism from $R$ to a reduced idempotent semiring factors uniquely through $R_{\mathrm{red}}$.
\end{prop}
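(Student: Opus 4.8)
The plan is to verify the two assertions separately, using the characterization of prime congruences as preimages of prime congruences (the Lemma preceding Theorem \ref{248}) together with the description of the intersection defining $R_{\mathrm{red}}$. Write $\rho$ for the intersection of all prime congruences on $R$, so $R_{\mathrm{red}}=R/\rho$, and let $\pi\colon R\to R_{\mathrm{red}}$ be the quotient map.

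First I would show $R_{\mathrm{red}}$ is reduced, i.e. that the intersection of all prime congruences on $R_{\mathrm{red}}$ is equality. By the Lemma above, the prime congruences on $R_{\mathrm{red}}=R/\rho$ are exactly the congruences $\sim/\rho$ where $\sim$ ranges over prime congruences on $R$ containing $\rho$; since $\rho$ is the intersection of \emph{all} prime congruences, every prime congruence on $R$ contains $\rho$, so the prime congruences on $R_{\mathrm{red}}$ are precisely $\{\sim/\rho : \sim \text{ prime on } R\}$. Now suppose $\pi(a)$ and $\pi(b)$ are identified by every prime congruence on $R_{\mathrm{red}}$; this says $a\sim b$ for every prime congruence $\sim$ on $R$, hence $a\,\rho\,b$, hence $\pi(a)=\pi(b)$. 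So the intersection of the prime congruences on $R_{\mathrm{red}}$ is equality, which is the definition of reduced.

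Next I would establish the universal property. Let $g\colon R\to S$ be a surjective homomorphism with $S$ reduced. Pulling back the prime congruences of $S$ along $g$ gives prime congruences on $R$ (again by the Lemma; note $g$ surjective is used in the reverse direction but not needed here — preimages of primes are prime with no surjectivity hypothesis), and since $S$ is reduced the intersection of its prime congruences is equality, so the kernel congruence of $g$ is the intersection over all primes $\mathfrak{q}$ of $S$ of $g^{-1}(\mathfrak{q})$. Each $g^{-1}(\mathfrak{q})$ is prime on $R$, hence contains $\rho$; therefore $\ker g \supseteq \rho$, and $g$ factors through $R/\rho=R_{\mathrm{red}}$. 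The factorization is a homomorphism since $g$ is, and it is unique because $\pi$ is surjective. Strictly, one should check $\ker g$ equals that intersection and not merely contains it: if $a,b\in R$ have $g(a)=g(b)$ then certainly $g(a)\mathrel{\mathfrak q}g(b)$ for all $\mathfrak q$, giving one inclusion, and conversely if $(a,b)\in g^{-1}(\mathfrak q)$ for all prime $\mathfrak q$ on $S$ then $g(a)$ and $g(b)$ agree in every prime quotient of $S$, hence are equal since $S$ is reduced. I do not foresee a serious obstacle here; the only point requiring care is keeping the correspondence between prime congruences on $R$ and on $R/\rho$ straight, and making sure the reducedness of $S$ is invoked in the form "intersection of primes is equality" rather than in some weaker form.
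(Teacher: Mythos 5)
Your proof is correct, and it is somewhat more direct than the paper's. Both arguments ultimately rest on the Lemma preceding Theorem~\ref{248} (preimages of prime congruences are prime, with the converse under a surjection), but the packaging differs. The paper first proves and then repeatedly invokes an intermediate characterization: an idempotent semiring is reduced if and only if homomorphisms into domains separate points. With that in hand, the paper shows reducedness of $R_{\mathrm{red}}$ by exhibiting separating homomorphisms into domains, and proves the universal property contrapositively by finding a domain quotient of $A$ that separates $f(x)$ and $f(y)$ and pulling back a prime congruence. You instead work directly with the lattice correspondence: prime congruences on $R/\rho$ correspond exactly to prime congruences on $R$ containing $\rho$ (which is all of them), so the intersection of primes on $R_{\mathrm{red}}$ is already equality; and for the universal property you pull back the primes of $S$ along $g$, note they all contain $\rho$, intersect, and conclude $\ker g\supseteq\rho$. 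Your route avoids the ``separating points'' detour and is closer to the standard argument for radical ideals in commutative algebra; the paper's route isolates a reformulation of reducedness that is perhaps useful elsewhere but is not strictly necessary here. One stylistic note: as in the paper, surjectivity of $g$ is not actually needed to obtain $\ker g\supseteq\rho$ (preimages of primes are prime without surjectivity), and uniqueness of the factorization follows from surjectivity of $\pi$ alone, as you observed.
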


\begin{proof}
I claim that an idempotent semiring $A$ is reduced if and only if homomorphisms from $A$ into domains separate points.  More precisely, $A$ is reduced if and only if for all $x,y\in A$ with $x\neq y$, there is a domain $B$ and homomorphism $f:A\rightarrow B$ such that $f(x)\neq f(y)$.  Suppose first that $A$ is reduced.  Since equality is the intersection of prime congruences and since $x\neq y$, there is some prime congruence $\sim$ with $x\not\sim y$.  One can take $B=A/{\sim}$ and take $f$ to be the quotient map.  Conversely, suppose homomorphisms into domains separate points.  Pick $x,y\in A$ so $x\sim y$ for every prime congruence $\sim$.  For the sake of contradiction, suppose $x\neq y$.  Then we may pick a homomorphism $f:A\rightarrow B$ into a domain $B$ such that $f(x)\neq f(y)$.  By Theorem \ref{193b}, $f(A)\subseteq B$ is also a domain.  Define $\sim$ by declaring $a\sim b$ to mean $f(a)=f(b)$. Note that $A/{\sim}\cong f(A)$ is a domain, so $\sim$ is prime.  Observe that $x\not\sim y$, which is a contradiction.  Hence $x=y$ and $A$ is reduced, which establishes the claim.

Let $\sim$ be the intersection of all prime congruences in $R$, so $R_\mathrm{red}=R/{\sim}$.  Let $x,y\in R/{\sim}$ be distinct.  Let $\hat{x},\hat{y}\in R$ be lifts.  Then $\hat{x}\not\sim\hat{y}$.  Then there is some prime congruence $\equiv$ in $R$ such that $\hat{x}\not\equiv\hat{y}$.  Let $\pi:R\rightarrow R/\equiv$ and $\phi:R\rightarrow R/{\sim}$ be the quotient maps.  Since $\sim$ is in $\equiv$, we have a quotient map $\psi:R/{\sim}\rightarrow R/\equiv$ with $\pi=\psi\phi$.  We know $\psi(x)=\psi(\phi(\hat{x}))=\pi(\hat{x})\neq \pi(\hat{y})=\psi(y)$ so $\psi$ separates $x$ and $y$.  Since $\equiv$ is prime, $R/\equiv$ is a domain.  Hence, homomorphisms from $R_\mathrm{red}=R/{\sim}$ into domains separate points.

Let $A$ be a reduced idempotent semiring and $f:R\rightarrow A$ be a homomorphism.  Let $\sim$ be the intersection of all prime congruences.  Let $x,y\in A$.  We wish to show that if $x\sim y$ then $f(x)=f(y)$.  We will prove the contrapositive, so we assume $f(x)\neq f(y)$.  Then there is a morphism $g:A\rightarrow B$ into a domain $B$ such that $g(f(x))\neq g(f(y))$.  Let $C$ be the image of $g\circ f$ and let $h:R\rightarrow C$ be induced by $g\circ f$.  Then $C\subseteq B$ is a domain since $B$ is a domain.  Furthermore $h$ is surjective and $h(x)\neq h(y)$.  Define $\equiv$ by declaring $u\equiv v$ to mean $h(u)=h(v)$.  Then $\equiv$ is a prime congruence and $x\not\equiv y$.  Hence $x\not\sim y$.
\end{proof}

\begin{cor}Let $R$ be a simple idempotent semiring.  Then $R_\mathrm{red}$ is cancellative and every morphism from $R$ to a cancellative idempotent semiring factors uniquely through $R_\mathrm{red}$.
\end{cor}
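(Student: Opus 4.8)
The plan is to deduce this corollary by combining Proposition~\ref{225} with Corollary~\ref{215}; the only point requiring care is that Proposition~\ref{225} is stated for surjective homomorphisms, whereas here we must handle an arbitrary morphism into a cancellative semiring.

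First I would check that $R_{\mathrm{red}}$ is cancellative. By Proposition~\ref{225}, $R_{\mathrm{red}}$ is reduced, and by definition it is a quotient of $R$. Since $R$ is simple, Corollary~\ref{215} applies to this quotient and shows that, being reduced, it is cancellative. This disposes of the first assertion.

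For the universal property, let $f:R\to A$ be a morphism of idempotent semirings with $A$ cancellative, and let $S=f(R)\subseteq A$ be its image, a subsemiring of $A$. A subsemiring of a cancellative semiring is cancellative: if $s,x,y\in S$ satisfy $s\neq 0$ and $sx=sy$, the equation holds in $A$, so $x=y$. Writing $S\cong R/{\sim}$, where $\sim$ is the congruence defined by $a\sim b$ if and only if $f(a)=f(b)$, Corollary~\ref{215} (again using that $R$ is simple) shows that $S$ is reduced because it is cancellative. The induced surjection $g:R\to S$ therefore factors, uniquely, as $g=\bar g\circ\pi$ with $\pi:R\to R_{\mathrm{red}}$ the quotient map and $\bar g:R_{\mathrm{red}}\to S$ a homomorphism, by Proposition~\ref{225}. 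Composing $\bar g$ with the inclusion $S\hookrightarrow A$ produces a factorization of $f$ through $R_{\mathrm{red}}$. Uniqueness of the factorization of $f$ is then automatic, since $\pi$ is surjective and any two maps $R_{\mathrm{red}}\to A$ that agree after precomposition with $\pi$ agree on all of $R_{\mathrm{red}}$.

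The main obstacle, such as it is, is the reduction to the surjective case via the image $S=f(R)$: one needs both that $S$ is cancellative (hence, by Corollary~\ref{215}, reduced) and that passing to $S$ loses no information about factorizations of $f$. Once this is in place, the statement is a direct consequence of the cited results.
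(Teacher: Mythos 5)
Your proof is correct and follows essentially the same approach as the paper: first combine Proposition~\ref{225} and Corollary~\ref{215} to handle surjective morphisms, then reduce the general case to the surjective one by factoring $f$ through its image $f(R)$. The paper's version is terser but the key step (passing to the image subsemiring, which is cancellative and hence reduced) is identical.
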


\begin{proof}By Proposition \ref{225} and Corollary \ref{215}, $R_\mathrm{red}$ is cancellative and every surjective morphism from $R$ to a cancellative idempotent semiring factors uniquely through $R_\mathrm{red}$.  If $f:R\rightarrow A$ is any morphism to a cancellative idempotent semiring, then the surjective morphism $R\rightarrow f(R)$ factors uniquely through $R_\mathrm{red}$, and hence so does $f$.
\end{proof}

\begin{prop}Let $R$ be a simple idempotent semiring.  Let $f:R\rightarrow R_\mathrm{red}$ be the universal map.  Then $f(x)=f(y)$ if and only if every homomorphism $v:R\rightarrow \Gamma_{\mathrm{max}}$ into a totally ordered idempotent semifield satisfies $v(x)=v(y)$.
\end{prop}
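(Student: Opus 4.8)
The plan is to treat the two implications separately. The ``only if'' direction is purely formal, while the ``if'' direction rests on one genuinely new ingredient: every domain embeds into $\Gamma_{\max}$ for a suitable totally ordered abelian group $\Gamma$.

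For ``only if'', suppose $f(x)=f(y)$ and let $v\colon R\to\Gamma_{\max}$ be a homomorphism into a totally ordered idempotent semifield. The semiring $\Gamma_{\max}$ is (multiplicatively) cancellative, so by the corollary preceding this proposition $v$ factors through the universal map $f$, say $v=\bar v\circ f$; hence $v(x)=\bar v(f(x))=\bar v(f(y))=v(y)$. (One could equally observe that $a\sim b\iff v(a)=v(b)$ is a congruence with $R/{\sim}\cong v(R)$ totally ordered and cancellative, hence a domain by Theorem~\ref{193b}, so $\sim$ is prime and therefore contains the defining congruence of $R_{\mathrm{red}}$.)

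For ``if'' I prove the contrapositive: assuming $f(x)\neq f(y)$, I construct a homomorphism $v\colon R\to\Gamma_{\max}$ with $v(x)\neq v(y)$. Since $f$ is the quotient by the intersection of all prime congruences, there is a prime congruence $\equiv$ with $x\not\equiv y$; let $q\colon R\to D:=R/{\equiv}$ be the quotient, so $q(x)\neq q(y)$, and $D$ is a domain, hence totally ordered and cancellative by Theorem~\ref{193b}. The crux is to embed $D$ into some $\Gamma_{\max}$. Since $D$ is a domain it has no zero divisors, so $D^*:=D\setminus\{0\}$ is a cancellative commutative monoid; its total order (inherited from $D$) is translation-invariant, and multiplication by any element of $D^*$ is order-reflecting (cancel the factor), so the Grothendieck group $G$ of $D^*$ carries a total order, defined by $a/s\le b/t\iff at\le bs$, for which $D^*\hookrightarrow G$ is an order-embedding. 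Put $\Gamma_{\max}=G\cup\{0\}$ with $0$ the adjoined least element acting as additive identity. The map $\iota\colon D\to\Gamma_{\max}$ which is the inclusion on $D^*$ and sends $0$ to $0$ is then a semiring homomorphism, the only nonobvious point being additivity, which holds because $a+b=\max(a,b)$ in any totally ordered idempotent semiring and $\iota|_{D^*}$ preserves order; and $\iota$ is clearly injective. Taking $v:=\iota\circ q$ finishes the argument.

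I expect the main obstacle to be the embedding $D\hookrightarrow\Gamma_{\max}$, specifically verifying that the relation $a/s\le b/t\iff at\le bs$ on the Grothendieck group is well defined, total, transitive and translation-invariant; this is routine for totally ordered cancellative commutative monoids but must be checked since $D^*$ need not itself be a group. Everything else is bookkeeping with the results of this section: the identification of prime congruences with quotient domains (the lemma preceding Theorem~\ref{248}), the characterization of domains (Theorem~\ref{193b}), and the universal property of $R_{\mathrm{red}}$ (Proposition~\ref{225} and its corollary).
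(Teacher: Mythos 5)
Your proposal is correct and follows essentially the same route as the paper. The ``only if'' direction is handled the same way (the parenthetical ``one could equally observe'' remark is in fact the paper's argument: $v$ induces a prime congruence, which must contain the defining congruence of $R_{\mathrm{red}}$). For the ``if'' direction, the paper simply writes $R\to\mathop{Frac}(R/{\equiv})$ and asserts this lands in a totally ordered idempotent semifield; your Grothendieck-group construction with the order $a/s\le b/t\iff at\le bs$ is precisely what $\mathop{Frac}$ of a cancellative totally ordered idempotent semiring is, so you are supplying the details the paper leaves implicit rather than taking a different path. Your verification that multiplication in $D^*$ is order-reflecting (via cancellation) and that the order descends to the group of fractions is the right routine check, and the rest is bookkeeping exactly as you say.
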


\begin{proof}Given a prime congruence $\sim$, one may construct a map $R\rightarrow \mathop{Frac}(R/\mathnormal{\sim})$ into a totally ordered idempotent semifield.  Conversely given such a map $v:R\rightarrow \Gamma_{\mathrm{max}}$, one can define the prime congruence $\sim$ via $x\sim y$ when $v(x)=v(y)$.  This yields a canonically split surjection from the set of homomorphisms landing in totally ordered idempotent semifields to the set of prime congruences.
If $f(x)=f(y)$, then every prime congruence satisfies $x\sim y$, so every homomorphism $v:R\rightarrow \Gamma_{\mathrm{max}}$ into a totally ordered idempotent semifield satisfies $v(x)=v(y)$.  Conversely, suppose every homomorphism $v:R\rightarrow \Gamma_{\mathrm{max}}$ into a totally ordered idempotent semifield satisfies $v(x)=v(y)$.  Then every prime congruence satisfies $x\sim y$, so $f(x)=f(y)$.
\end{proof}




\begin{prop}\label{236}Let $R$ be a simple idempotent semiring.  Let $f:R\rightarrow R_\mathrm{red}$ be the universal map.  Then $f(x)=f(y)$ if and only if there exists some $s\neq 0$ in $R$ with $sx=sy$.
\end{prop}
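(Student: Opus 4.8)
The plan is to study the relation $\equiv$ on $R$ defined by $x\equiv y$ if and only if $sx=sy$ for some nonzero $s\in R$. It will suffice to prove three things: that $\equiv$ is a semiring congruence, that the quotient $R/{\equiv}$ is cancellative, and that the congruence kernel of $f$ is exactly $\equiv$. Once $R/{\equiv}$ is known to be cancellative, the universal property of $R_{\mathrm{red}}$ among cancellative idempotent semirings (the corollary following Proposition \ref{225}) will give a factorization of the quotient map $q\colon R\to R/{\equiv}$ through $f$, and the third point is precisely the statement to be proved.

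First I would check that $\equiv$ is a congruence. Reflexivity uses $1\neq 0$, which holds because $R$ is simple (so $0$ and $R$ are distinct ideals); symmetry is immediate; and from $sx=sy$ one gets $s(x+c)=s(y+c)$ and $s(xc)=s(yc)$ with the same witness, so $\equiv$ is compatible with both operations. The one delicate axiom is transitivity: from $sa=sb$ and $tb=tc$ one obtains $(st)a=(st)c$, which is useful only if $st\neq 0$. This is where simplicity is essential, via Proposition \ref{87}: picking $s',t'$ with $ss'\geq 1$ and $tt'\geq 1$ gives $(st)(s't')\geq 1$ by monotonicity of multiplication, so $st=0$ would force $0\geq 1$ and hence $R=0$, a contradiction. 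Thus the nonzero elements of a simple idempotent semiring form a multiplicative set, and $\equiv$ is transitive.

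Next I would verify that $R/{\equiv}$ is cancellative. If $[u]\neq 0$ and $[u][x]=[u][y]$, then $u$ is annihilated by no nonzero element, while there is a nonzero $s$ with $s(ux)=s(uy)$; the element $su$ is nonzero (otherwise $s$ would annihilate $u$) and witnesses $[x]=[y]$. Since $[1]\neq 0$ as well, $R/{\equiv}$ is a cancellative idempotent semiring, so the universal property quoted above yields $g\colon R_{\mathrm{red}}\to R/{\equiv}$ with $q=g\circ f$. Consequently $f(x)=f(y)$ implies $q(x)=q(y)$, i.e.\ $x\equiv y$, which produces the desired nonzero $s$. For the converse, suppose $sx=sy$ with $s\neq 0$, and let $\sim$ be any prime congruence on $R$. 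Then $sx\sim sy$, and $\sim$ is QC; moreover $s\not\sim 0$, since otherwise the kernel of $R\to R/{\sim}$ would be a saturated ideal, proper because $0\not\sim 1$, containing the nonzero element $s$, contradicting simplicity. Cancelling in the cancellative semiring $R/{\sim}$ gives $x\sim y$; as this holds for every prime congruence and $f$ is the quotient by their intersection, $f(x)=f(y)$.

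I expect the main obstacle to be isolating the exact role of simplicity. The relation $\equiv$ can be written down on any idempotent semiring, but without the multiplicative closure of the nonzero elements it need not even be transitive, and that closure is precisely what Proposition \ref{87} supplies for simple $R$; similarly, the factorization of reduced (equivalently, cancellative) quotients through $R_{\mathrm{red}}$ that drives the forward implication is a feature of the simple case. Once these two ingredients are in hand, the remaining checks are routine.
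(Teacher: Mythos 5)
Your proof is correct, and it is essentially the paper's argument with the localization step unpacked. For the forward direction, the paper maps $R$ into $K=R_{(0)}$, the localization at the set of nonzero elements (a multiplicative set precisely because $R$ is simple), asserts that $K$ is cancellative, and factors $R\to K$ through $f$ by the universal property of $R_{\mathrm{red}}$; the resulting identity $\frac{x}{1}=\frac{y}{1}$ in $K$ then unwinds to the existence of a nonzero $s$ with $sx=sy$. Your congruence $\equiv$ is exactly the congruence kernel of $R\to R_{(0)}$, and you verify by hand that it is a congruence and that $R/{\equiv}$ is cancellative before invoking the same universal property, with simplicity entering in the same two places (multiplicative closure of the nonzero elements, and properness of the kernel). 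The payoff of your version is self-containment: you do not presuppose that $R_{(0)}$ is cancellative, which the paper uses without comment. For the converse the paper argues more tersely, directly from cancellativity of $R_{\mathrm{red}}$ together with $\ker f=0$; your prime-congruence-by-prime-congruence argument is a correct expansion of that same cancellation, just a bit longer.
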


\begin{proof}Suppose $f(x)=f(y)$.  Let $K=R_{(0)}$ be the localization of $R$ at the set of nonzero elements and $g:R\rightarrow K$ be the localization map.  Since $K$ is cancellative $g$ factors uniquely through $f$, so we write $g=hf$ for some $f$.  Then $\frac{x}{1}=g(x)=h(f(x))=g(y)=\frac{y}{1}$.  Hence there is some $s\neq 0$ with $sx=sy$.

Now suppose there is some nonzero $s\in R$ with $sx=sy$.  Then $f(s)f(x)=f(s)f(y)$.  Since $R_\mathrm{red}$ is cancellative and $\ker f=0$, we conclude $f(x)=f(y)$.
\end{proof}

Combining the above results yields the following.


\begin{thm}Let $R$ be a simple idempotent semiring and $x,y\in R$.  Then there exists some $s\neq 0$ in $R$ with $sx=sy$ if and only if every homomorphism $v:R\rightarrow \Gamma_{\max}$ whose target is a totally ordered idempotent semifield satisfies $v(x)=v(y)$.
\end{thm}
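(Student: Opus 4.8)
The plan is to read off the statement immediately from the two propositions that precede it, since each of them characterizes the same intermediate condition. Let $f\colon R\to R_{\mathrm{red}}$ be the universal map. Proposition \ref{236} asserts that $f(x)=f(y)$ holds if and only if there is a nonzero $s\in R$ with $sx=sy$. The unnumbered proposition immediately before it asserts that $f(x)=f(y)$ holds if and only if every homomorphism $v\colon R\to\Gamma_{\max}$ into a totally ordered idempotent semifield satisfies $v(x)=v(y)$. Chaining these two biconditionals through the common middle term $f(x)=f(y)$ gives exactly the claimed equivalence.

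Unwinding the two implications: if $sx=sy$ for some $s\neq 0$, then $f(x)=f(y)$ by Proposition \ref{236}, and hence $v(x)=v(y)$ for every homomorphism $v$ into a totally ordered idempotent semifield; conversely, if $v(x)=v(y)$ for all such $v$, then $f(x)=f(y)$ by the valuation-separation proposition, and hence $sx=sy$ for some nonzero $s\in R$, again by Proposition \ref{236}. Both inputs are stated for an arbitrary simple idempotent semiring $R$, so the composition is legitimate with no additional hypotheses, and there is no real obstacle to overcome here — the work was done in establishing those two propositions (which in turn rest on Proposition \ref{225}, Corollary \ref{215}, and the fact that simplicity forces $\ker f=0$).

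If one wished to present a self-contained argument instead, the only nontrivial direction is that $v(x)=v(y)$ for all $v$ forces $sx=sy$ for some nonzero $s$: one localizes $R$ at its nonzero elements to get the cancellative semifield $K=R_{(0)}$ together with a homomorphism into the totally ordered semifield $\mathrm{Frac}(K/{\sim})$ for each prime congruence, and uses that equality in $R_{\mathrm{red}}$ is detected by these. But given the material already developed, simply invoking the two propositions is cleanest.
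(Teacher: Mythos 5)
Your argument is exactly the one the paper intends: the theorem appears under the heading ``Combining the above results yields the following,'' and the two results being combined are precisely the valuation-separation proposition and Proposition~\ref{236}, chained through $f(x)=f(y)$. Your write-up is correct and matches the paper's approach.
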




Noting that $sx\leq s1$ may be written as $s(x+1)=s(1)$, and that $v(x)\leq 1$ may be written as $v(x+1)=v(1)$, we obtain the following result, which essentially characterizes an intersection of valuation subsemirings.

\begin{cor}\label{334}Let $R$ be a simple idempotent semiring and $x\in R$.  Then there exists some $s\neq 0$ in $R$ with $sx\leq s$ if and only if every homomorphism $v:R\rightarrow \Gamma_{\max}$ whose target is a totally ordered idempotent semifield satisfies $v(x)\leq 1$.
\end{cor}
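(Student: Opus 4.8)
The plan is to obtain this as an immediate corollary of the theorem immediately preceding it (the one characterizing when there is a nonzero $s$ with $sx = sy$) by specializing $y$ to $1$, after translating both the algebraic condition $sx \leq s$ and the condition $v(x) \leq 1$ on homomorphisms into statements about sums, using the canonical order on an idempotent semigroup.

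First I would record the two routine reformulations. In any idempotent semigroup $a \leq b$ means precisely $a + b = b$; hence for $s, x \in R$ the inequality $sx \leq s$ holds if and only if $sx + s = s$, and by distributivity together with $s = s \cdot 1$ this says exactly $s(x+1) = s \cdot 1$. Similarly, for a homomorphism $v \colon R \to \Gamma_{\max}$ whose target is a totally ordered idempotent semifield we have $v(1) = 1$, so $v(x) \leq 1$ means $v(x) \leq v(1)$, i.e.\ $v(x) + v(1) = v(1)$, which by additivity of $v$ is exactly $v(x+1) = v(1)$.

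It then remains only to invoke the preceding theorem with the elements $x+1$ and $1$ in place of its $x$ and $y$: it yields that there is a nonzero $s \in R$ with $s(x+1) = s(1)$ if and only if every homomorphism $v \colon R \to \Gamma_{\max}$ into a totally ordered idempotent semifield satisfies $v(x+1) = v(1)$. Rewriting each side via the reformulations above turns this into the assertion of the corollary. There is no real obstacle here; the only points that require a moment's attention are that both reformulations are genuine equivalences (so that an ``if and only if'' input produces an ``if and only if'' output) and that $1 \in R$ is a legitimate value of $y$, which it is since $R$ is a unital semiring.
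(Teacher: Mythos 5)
Your proof is correct and is exactly the paper's own argument: the authors note in the sentence immediately preceding the corollary that $sx\leq s$ rewrites as $s(x+1)=s(1)$ and $v(x)\leq 1$ rewrites as $v(x+1)=v(1)$, then apply the preceding theorem with $x+1$ and $1$. Nothing to add.
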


\section{Two notions of integrality}

\begin{defn}Let $R$ be an idempotent semiring.  Let $A$ be an $R$-algebra.  Let $x\in A$. Let $R\angles{x}$ be the smallest saturated subsemiring of $A$ containing both $x$ and the image of $R$ (or equivalently the saturated submodule generated by powers of $x$).  Then $x$ is said to be \emph{integral} over $R$ if $R\angles{x}$ is a {\sfinite} module over $R$.  $A$ is \emph{integral} over $R$ if every element of $A$ is integral over $R$.
\end{defn}

\begin{prop}Let $R$ be an idempotent semiring.  Let $A$ be an $R$-algebra.  Let $x\in A$.  $x$ is integral over $R$ if and only if there exists some $n>1$ and $c_0,\ldots,c_{n-1}\in R$ such that $x^n\leq c_0+c_1x+\ldots+c_{n-1}x^{n-1}$.
\end{prop}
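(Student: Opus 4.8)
The plan is to work with the concrete description of $R\angles{x}$ as the saturated $R$-submodule of $A$ generated by the powers of $x$; explicitly,
$R\angles{x}=\{a\in A : a\le\sum_{i=0}^{m}r_ix^i\text{ for some }m\ge 0\text{ and }r_0,\dots,r_m\in R\}$,
since the downward closure of the ordinary submodule generated by $\{x^i\}_{i\ge 0}$ is already a saturated submodule. Throughout I will use freely that multiplication is monotone ($a\le b$ implies $ac\le bc$), that a submodule of an idempotent module is saturated precisely when it is downward closed, and that $x^k\le\sum_{i=0}^{n-1}x^i$ whenever $k<n$.

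For the forward implication, suppose $x$ is integral, so $R\angles{x}$ is {\sfinite} with some generator $w\in R\angles{x}$: every $z\in R\angles{x}$ satisfies $z\le rw$ for a suitable $r\in R$. Since $w\in R\angles{x}$, write $w\le\sum_{i=0}^{m}s_ix^i$ with $s_i\in R$. Applying the generating property to $x^{m+1}\in R\angles{x}$ produces $r\in R$ with $x^{m+1}\le rw\le\sum_{i=0}^{m}(rs_i)x^i$; putting $c_i=rs_i$ and $n=m+1$ gives the required inequality when $m\ge 1$. If instead $m=0$ we obtain $x\le c_0$ with $c_0\in R$, and multiplying by $x$ yields $x^2\le 0+c_0x$, which is the desired inequality with $n=2$.

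For the converse, suppose $x^n\le c_0+c_1x+\cdots+c_{n-1}x^{n-1}$ with $n>1$ and $c_i\in R$. Set $e=\sum_{i=0}^{n-1}x^i$ and let $M=\{a\in A : a\le re\text{ for some }r\in R\}$. A direct check shows $M$ is a saturated $R$-submodule of $A$ with witness element $e$, so it is {\sfinite}; and $M\subseteq R\angles{x}$ because $e\in R\angles{x}$. The crux is to prove $x^k\in M$ for all $k\ge 0$, by induction on $k$. For $k<n$ this follows from $x^k\le e$ and saturation of $M$. For $k\ge n$, multiply the hypothesis by $x^{k-n}$ to get $x^k\le\sum_{i=0}^{n-1}c_ix^{k-n+i}$; every exponent $k-n+i$ lies in $\{0,1,\dots,k-1\}$, so each $x^{k-n+i}$ is in $M$ by the inductive hypothesis, hence so is $\sum_{i=0}^{n-1}c_ix^{k-n+i}$ since $M$ is a submodule, and therefore $x^k\in M$ by saturation. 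Thus $M$ is a saturated submodule containing every power of $x$, so $M=R\angles{x}$; since $M$ is {\sfinite}, so is $R\angles{x}$, i.e.\ $x$ is integral.

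The step I expect to be the main obstacle is the inductive step of the converse: one must verify carefully that multiplying the degree-$n$ relation by $x^{k-n}$ genuinely yields only terms of exponent at most $k-1$ (so that the inductive hypothesis applies), and invoke the submodule and saturation properties of $M$ in the correct order. The $n=1$ edge case of the forward direction is a minor wrinkle, dealt with by passing to $n=2$ as above; and the bookkeeping identifying $R\angles{x}$ with the stated downward-closed set should be recorded once and reused in both directions.
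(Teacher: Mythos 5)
Your proof is correct and follows essentially the same strategy as the paper: for the ``integral implies relation'' direction you extract the finiteness witness of $R\angles{x}$, dominate it by a polynomial in $x$, and bound the next power; for the converse you form the saturated submodule generated by $1,x,\dots,x^{n-1}$, show it is stable under multiplication by $x$ via the given inequality, and conclude it equals $R\angles{x}$. The only difference is that you spell out the downward-closure description of $R\angles{x}$, the induction on $k$, and the $m=0$ edge case more explicitly, all of which the paper leaves implicit.
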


\begin{proof}Suppose there exists some $n>1$ and $c_0,\ldots,c_{n-1}\in R$ such that $x^n\leq c_0+c_1x+\ldots+c_{n-1}x^{n-1}$.  Let $M$ be the saturated submodule of $A$ spanned by $1,x,\ldots,x^{n-1}$.  Then $c_0+c_1x+\ldots+c_{n-1}x^{n-1}\in M$ so $x^n\in M$.  Since $x,x^2,\ldots,x^n\in M$, multiplying any generator of $M$ by $x$ yields an element of $M$, i.e. $xM\subseteq M$.  Hence $x^k\in x^kM\subseteq M$ for $k\geq 0$.  From this, one easily checks that $R\angles{x}=M$.

Conversely, suppose $x$ is integral.  Because $R\angles{x}$ is {\sfinite}, there exists $u\in R\angles{x}$ such that for all $t\in R\angles{x}$ there exists $r\in R$ with $t\leq ru$.  Without loss of generality, we may replace $u$ with a larger element, and assume $u$ is a polynomial in $x$.  Write $u=a_0+\ldots+a_{n-1}x^{n-1}$ for some $n$, where the coefficients lie in $R$.  Then there exists $r$ with $x^n\leq ru=ra_0+\ldots+ra_{n-1}x^{n-1}$.
\end{proof}

\begin{defn}Let $R$ be an idempotent semiring.  An $R$-module $M$ is \emph{faithful} if $rM\neq 0$ for all $r\in R$.\end{defn}

\begin{defn}\label{362}Let $R$ be an idempotent semiring.  Let $A$ be an $R$-algebra.  Let $x\in A$.  Let $R\angles{x}$ be the smallest saturated subsemiring of $A$ containing both $x$ and the image of $R$.  $x$ is \emph{quasiintegral} over $R$ if there exists an $R\angles{x}$-submodule $M\subseteq A$ which is {\sfinite} as an $R$-module and which is faithful as an $R\angles{x}$-module.  $A$ is \emph{quasiintegral} over $R$ if every element of $A$ is quasiintegral over $R$.
\end{defn}

For the moment, we denote by $R[x]$ the smallest subsemiring of $A$ containing both $x$ and the image of $R$.  It is easy to see that the definition of quasiintegral does not change if instead of $R\angles{x}$ we use $R[x]$.

\begin{prop}\label{367}Let $R$ be an idempotent semiring.  Let $A$ be an $R$-algebra.  Let $x\in A$ be integral over $R$.  Then $x$ is quasiintegral over $R$.
\end{prop}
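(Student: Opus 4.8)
The plan is to take as the witness module $M := R\angles{x}$ itself. Since $R\angles{x}$ is, by definition, a subsemiring of $A$ containing the image of $R$, it is in particular closed under multiplication by its own elements, hence is an $R\angles{x}$-submodule of $A$. Thus everything reduces to verifying the two defining conditions in Definition \ref{362}: that $M$ is {\sfinite} as an $R$-module, and that $M$ is faithful as an $R\angles{x}$-module.

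The first condition holds by hypothesis: it is exactly the statement that $x$ is integral over $R$, so there is nothing to check. For the second, I would observe that $M$ contains the image of $R$, and so contains $1$ (the image of $1\in R$, which is the unit of $A$). Consequently, for any nonzero $r\in R\angles{x}$ we have $r = r\cdot 1\in rM$, so $rM\neq 0$, which is precisely faithfulness. (If $1 = 0$ in $A$ then $A$ is the zero semiring and the claim is vacuous.) Hence $M = R\angles{x}$ satisfies all the requirements of Definition \ref{362}, and $x$ is quasiintegral over $R$.

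I do not expect a genuine obstacle here: the proposition amounts to unwinding the two definitions and noticing that the {\sfinite} module furnished by integrality can be reused verbatim as the faithful module demanded by quasiintegrality, with no need to modify or enlarge it. The only point that needs to be recorded explicitly is that $1\in R\angles{x}$, since this is what makes the faithfulness verification automatic.
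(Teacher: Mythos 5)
Your proposal is correct and takes exactly the same approach as the paper, which simply says ``Take $M=R\angles{x}$''; you have merely spelled out the routine verifications (that $R\angles{x}$ is an $R\angles{x}$-submodule, that finiteness is the integrality hypothesis, and that $1\in R\angles{x}$ gives faithfulness) that the paper leaves implicit.
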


\begin{proof}Take $M=R\angles{x}$.
\end{proof}

In the theory of rings, there is a simple proof that quasiintegral elements are integral using the Cayley-Hamilton theorem.  Unfortunately, it is not clear whether there is a Cayley-Hamilton theorem for idempotent semirings.  We get around this by imposing some additional hypotheses, but hopefully these hypotheses are actually unneccessary.

\begin{lem}\label{375}Let $R$ be an idempotent semiring.  Let $A$ be an $R$-algebra.  Suppose furthermore that $A$ is simple.  Let $x\in A$ be quasiintegral over $R$.  One can choose $M$ as in Definition \ref{362} such that in addition, one has $R\angles{x}\subseteq M$.
\end{lem}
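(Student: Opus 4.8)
The plan is to replace the given witness $M$ by the saturated $R$-submodule of $A$ generated by a single well-chosen product; the hypothesis that $A$ is simple is exactly what makes such a product available. We may assume $A\neq 0$, since otherwise $R\angles{x}=0=A$ and $M=0$ already works. As $M$ is sfinite over $R$, fix $u\in M$ with the property that every $y\in M$ satisfies $y\le ru$ for some $r\in R$; as $A$ is simple, Proposition~\ref{87} provides $b\in A$ with $ub\ge 1$. I claim that
\[ M':=\{a\in A\mid a\le r(bu)\text{ for some }r\in R\}, \]
the saturated $R$-submodule of $A$ generated by $bu$, meets every requirement of Definition~\ref{362} and in addition contains $R\angles{x}$.

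Two elementary observations drive the verification. First, since $M$ is an $R\angles{x}$-submodule containing $u$, we have $x^{k}u\in M$ for every $k\ge 0$, hence $x^{k}u\le r_{k}u$ for suitable $r_{k}\in R$; multiplying through by $b$ and using commutativity gives $x^{k}(bu)\le r_{k}(bu)$ for all $k$. Second, $ub\ge 1$ says $1\le bu=1\cdot(bu)$, so $1\in M'$.

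With these observations in hand, I would verify the three requirements of Definition~\ref{362} for $M'$ together with the containment $R\angles{x}\subseteq M'$, all of which are now routine. Sfiniteness over $R$ holds by construction, $bu$ being a generator. The case $k=1$ of the first observation gives $xM'\subseteq M'$; since $\{a\in A\mid aM'\subseteq M'\}$ is a saturated subsemiring of $A$ containing the image of $R$ and the element $x$, it contains $R\angles{x}$, so $M'$ is an $R\angles{x}$-submodule of $A$. Because $1\in M'$ and $M'$ is an $R\angles{x}$-module, $R\angles{x}=R\angles{x}\cdot 1\subseteq M'$, which is the extra property sought; and faithfulness of $M'$ over $R\angles{x}$ follows at once, since $r\cdot 1=r\neq 0$ for every nonzero $r\in R\angles{x}$.

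The one place that requires an idea rather than bookkeeping is the choice of $M'$: the naive enlargements of $M$ that visibly contain $R\angles{x}$, such as $M+R\angles{x}$ or the idealizer $\{a\in A\mid aM\subseteq M\}$, are not obviously sfinite over $R$. Simplicity of $A$ is precisely what lets us trade ``$M$ controls the powers of $x$ up to a factor in $R$'' for ``the single element $bu$ controls the powers of $x$ up to a factor in $R$'', via the inequality $ub\ge 1$; this is the only point at which simplicity of $A$ is used, and it stands in for the Cayley--Hamilton argument available in the ring-theoretic setting.
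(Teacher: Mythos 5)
Your proof is correct and follows essentially the same route as the paper's: take the same $M'$ (the saturated submodule generated by $bu$ where $ub\ge 1$ comes from simplicity), observe that $M'$ absorbs $x$ and contains $1$, and conclude. The one cosmetic difference is that you package the verification that $M'$ is an $R\angles{x}$-submodule via the idealizer $\{a\in A\mid aM'\subseteq M'\}$, whereas the paper checks it directly from a bound $z\le c_0+\cdots+c_nx^n$; both are the same computation in substance.
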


\begin{proof}Let $M$ be as in Definition \ref{362}.  Since $M$ is {\sfinite} over $R$, there is some $m\in M$ such that every element $\theta\in M$ satisfies $\theta\leq rm$ for some $r\in R$.  Since $M$ is a saturated submodule of $A$, any $\theta\in A$ with $\theta\leq rm$ satisfies $\theta\in M$.  Hence $M=\{\theta\in A\mid \exists r\in R\,\mathrm{such\,that}\, \theta\leq rm\}$.  Since $M$ is an $R\angles{x}$-submodule, for each $k$ we have $x^km\in M$ so $x^km\leq r_km$ for some $r_k\in R$.  Since $A$ is simple, we may choose $v\in A$ with $1\leq mv$.  Then for each $k\in\mathbb{N}$, $x^k\leq x^kvm\leq r_k(vm)$.  Let $M'$ be the saturated submodule of $A$ generated by $vm$.  Then $x^k\in M'$ for all $k$, and hence $R\angles{x}\subseteq M'$.  Note also that $M'$ is {\sfinite} as an $R$-module by construction.

Note also that since $x^km\leq r_k m$, $x^k(vm)\leq r_k (vm)$.  Pick $y\in M'$.  Then there is some $r\in R$ such that $y\leq rvm$.  Then for any $k$, $x^ky\leq rx^k(vm)\leq rr_k(vm)\in M'$.  If $z\in R\angles{x}$, we may write some inequality of the form $z\leq c_0+c_1x+\ldots+c_nx^n$ with $c_0,\ldots,c_n\in R$.  Since $c_ix^iy\in M'$ for each $i$, we have $(c_0+\ldots+c_nx^n)y\in M'$.  Since $zy\leq (c_0+\ldots+c_nx^n)y$, $zy\in M'$.  Hence $M'$ is an $R\angles{x}$-submodule.  It is faithful because it contains $1$.
\end{proof}



\begin{prop}\label{383}Let $R$ be an idempotent semiring.  Let $A$ be a simple $R$-algebra which is seminoetherian as an $R$-module.  Let $x\in A$ be quasiintegral over $R$.  Then $x$ is integral over $R$.
\end{prop}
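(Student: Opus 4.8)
The plan is to reduce the statement to Lemma \ref{375} together with the chain of definitions relating \emph{quasiintegral}, \emph{{\sfinite}}, \emph{noetherian} and \emph{seminoetherian}; in fact most of the work has already been done in Lemma \ref{375}, and what remains is to unwind definitions.

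First, since $A$ is simple, I would invoke Lemma \ref{375} to choose an $R\angles{x}$-submodule $M\subseteq A$ which is {\sfinite} as an $R$-module, faithful as an $R\angles{x}$-module, and which moreover satisfies $R\angles{x}\subseteq M$. The only reason for passing to this particular $M$ (rather than the one furnished directly by the definition of quasiintegral) is precisely that it contains $R\angles{x}$, so that $R\angles{x}$ becomes visible as a submodule of $M$. Next I would observe that $M$ is a {\sfinite} submodule of the $R$-module $A$, and that $A$ is seminoetherian as an $R$-module by hypothesis; hence $M$ is noetherian as an $R$-module. By the definition of noetherian, every $R$-submodule of $M$ is then {\sfinite}. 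Now $R\angles{x}$ is a saturated subsemiring of $A$ containing the image of $R$, hence a saturated $R$-submodule of $A$, and it is contained in $M$, so it is an $R$-submodule of $M$. Therefore $R\angles{x}$ is {\sfinite} as an $R$-module, which by definition says exactly that $x$ is integral over $R$.

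There is no serious obstacle here once Lemma \ref{375} is available; the one point that deserves a moment's care is that \emph{{\sfinite}} is a statement about the intrinsic partial order of a module, so I should remark that the order $R\angles{x}$ inherits as a subsemigroup of $A$ (equivalently, as a subsemigroup of $M$) is just the restriction of the order on $A$ — this is automatic because in an idempotent semigroup the order $a\leq b\iff a+b=b$ is computed from the addition, which is the same in a subsemigroup. This makes the three occurrences of \emph{{\sfinite}} — for $M$ over $R$, for $R$-submodules of $M$, and for $R\angles{x}$ over $R$ — mutually compatible, so that the definitional chain of the previous paragraph goes through without friction. Everything else is a direct reading off of the definitions of {\sfinite}, noetherian, seminoetherian and integral.
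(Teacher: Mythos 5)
Your proof is correct and follows essentially the same route as the paper's: invoke Lemma \ref{375} to obtain an $R\angles{x}$-submodule $M$ containing $R\angles{x}$ that is {\sfinite} over $R$, then use the seminoetherian hypothesis to conclude $R\angles{x}$ is {\sfinite}. You simply spell out the definitional unwinding (seminoetherian $\Rightarrow$ $M$ noetherian $\Rightarrow$ its submodule $R\angles{x}$ is {\sfinite}) that the paper leaves implicit.
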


\begin{proof}Use Lemma \ref{375} to pick an $R\angles{x}$-submodule $M\subseteq K$ with $R\angles{x}\subseteq M$ such that $M$ is {\sfinite} over $R$.  By the seminoetherian condition, $R\angles{x}$ is {\sfinite} as well.
\end{proof}

\section{Valuation semirings and quasiintegral closure}\label{s392}

\begin{defn}Let $R$ be an idempotent semiring.  Let $A$ be an $R$-algebra.  The \emph{quasiintegral closure} of $R$ in $A$ is the set of elements which are quasiintegral over $R$.
\end{defn}

\begin{defn}\label{411}Let $R$ be an idempotent semiring.  Let $M$  be an $R$-module.  The \emph{$R$-contraction} of $M$, denoted $M_{R\leq 1}$ is given by the following universal property.  There is a homomorphism $\phi:M\rightarrow M_{R\leq 1}$ such that $\phi(rx)\leq \phi(x)$ for all $r\in R$ and $x\in M$, and furthermore, if $\psi:A\rightarrow B$ satisfies $\psi (rx)\leq \psi(x)$ for all $r\in R$ and $x\in M$, then $\psi$ factors uniquely through $\phi$.
\end{defn}

\begin{lem}\label{317}Let $R$ be an idempotent semiring and $M$ be an $R$-module.  For any $x\in M$ (resp. $x\in R$) we use $\bar{x}$ to denote the image of $x$ in $M_{R\leq 1}$ (resp. $R_{R\leq 1}$).  Let $x,y\in M$.  Then $\bar{x}=\bar{y}$ if and only if there exists $r,s\in R$ with $x\leq ry$ and $sx\leq y$.
\end{lem}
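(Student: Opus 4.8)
The plan is to treat the two implications separately. The ``if'' direction is formal, and for ``only if'' I would exhibit an explicit congruence and feed it into the universal property of Definition~\ref{411}.

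For the ``if'' direction the argument is immediate: given $r,s\in R$ with $x\leq ry$ and $y\leq sx$, note that $\phi\colon M\to M_{R\leq 1}$ is an order-preserving $R$-module homomorphism satisfying $\phi(rm)\leq\phi(m)$ for all $r\in R$, $m\in M$; applying it yields $\bar x\leq\phi(ry)\leq\bar y$ and, symmetrically, $\bar y\leq\bar x$, so $\bar x=\bar y$.

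For ``only if'' I would introduce the relation $\approx$ on $M$ given by $a\approx b$ iff there exist $r,s\in R$ with $a\leq rb$ and $b\leq sa$, and proceed in three steps. First, $\approx$ is an $R$-module congruence on $M$: it is reflexive (take $r=s=1$), symmetric by construction, and transitive by composing witnesses; it is compatible with addition since $a\leq rb$, $a'\leq r'b'$ give $a+a'\leq rb+r'b'\leq(r+r')(b+b')$ (and symmetrically for the reverse inequalities defining $\approx$); and it is compatible with the $R$-action since $a\leq rb$ gives $ta\leq r(tb)$ for any $t\in R$ (and symmetrically). Second, the quotient map $q\colon M\to M/{\approx}$ satisfies $q(rm)\leq q(m)$ for all $r\in R$, $m\in M$, because this is equivalent to $rm+m\approx m$, which holds as $rm+m=(r+1)m$ and $m\leq rm+m$. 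Third, since $q$ is an $R$-module homomorphism with $q(rm)\leq q(m)$, the universal property of $M_{R\leq 1}$ factors it as $q=\rho\circ\phi$ for a (unique) homomorphism $\rho$; hence $\bar x=\bar y$ forces $q(x)=q(y)$, i.e.\ $x\approx y$, which is precisely the asserted condition.

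I expect the only genuine work to be the verification in the first step that $\approx$ is a congruence compatible with \emph{both} the additive and the $R$-module structures --- this dual compatibility being exactly what is needed for the universal property, which concerns $R$-module homomorphisms, to apply --- together with the elementary identities $rm+m=(r+1)m$ and $m\leq rm+m$ underlying the second step. Granting these, the conclusion follows purely formally from the universal property.
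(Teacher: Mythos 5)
Your proof is correct and follows essentially the same route as the paper's: define the explicit relation $\approx$, check it is an $R$-module congruence on $M$ for which the quotient map $q$ satisfies $q(rm)\leq q(m)$, invoke the universal property of Definition~\ref{411} to factor $q$ through $\phi$, and read off both directions. One small remark: the lemma as printed says ``$sx\leq y$'', which is a typo for ``$y\leq sx$'' (the paper's own ``Conversely'' paragraph uses the latter); you silently read it the intended way, and the ``if'' direction genuinely needs the corrected form, since $sx\leq y$ only gives $\overline{sx}\leq\bar{y}$, which yields no information beyond $\bar{x}\leq\bar{y}$.
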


\begin{proof}Write $x\sim y$ if there exists $r,s\in R$ with $x\leq ry$ and $sx\leq y$.  If $x\sim y$ and $y\sim z$, then we choose $r,r',s,s'$ with $x\leq ry$, $y\leq sx$, $y\leq r'z$, and $z\leq s'y$.  Then $x\leq rr'z$ and $z\leq ss'x$ so $x\sim z$.  It is easy to verify $\sim$ is symmetric and reflexive.  Suppose $x\sim y$ and $z\in M$.  Choose $r,s$ so $x\leq ry$ and $sx\leq y$.  Then $x+z\leq ry+z\leq (r+1)(y+z)$ and $y+z\leq (s+1)(x+z)$.  Hence $\sim$ is compatible with addition.  Now suppose $x\sim y$ and let $t\in R$.  Then we have $r,s\in R$ such that $tx\leq rty$ and $stx\leq ty$ so $tx\sim ty$.  Hence $\sim$ is compatible with scalar multiplication.  Hence the semimodule structure on $M/\mathnormal{\sim}$ is well-defined.  

Let $x\in M$ and $t\in R$.  Then $x\leq (t+1)x$ and $(t+1)x\leq (t+1)x$ so $x\sim (t+1)x$.  Let $y\in M/\mathnormal{\sim}$ be the image of $x$.   Then $y=ty+y$ so $ty\leq y$.  Hence the canonical map $\pi:M\rightarrow M/\mathnormal{\sim}$ factors uniquely through the universal map $\phi:M\rightarrow M_{R\leq 1}$.  If $\bar{x}=\bar{y}$ (i.e. $\phi(x)=\phi(y)$), this implies $\pi(x)=\pi(y)$ and so $x\sim y$.

Conversely, suppose $x\sim y$.  Let $r,s$ be such that $x\leq ry$ and $y\leq sx$.  Then $\bar{x}\leq \overline{ry}\leq \bar{y}$ and $\bar{y}\leq \overline{sx}\leq\bar{x}$.  Hence $\bar{x}=\bar{y}$.
\end{proof}


\begin{lem}\label{423}Let $R$ be an idempotent semiring and $M$ be an $R$-module.  For any $x\in M$ (resp. $x\in R$) we use $\bar{x}$ to denote the image of $x$ in $M_{R\leq 1}$ (resp. $R_{R\leq 1}$).  For $\bar{a}\in M_{R\leq 1}$, the construction $N=\{b\in M\mid \bar{b}\leq \bar{a}\}$ yields a {\sfinite} saturated $R$-submodule of $M$.  Furthermore every {\sfinite} saturated submodule has this form.
\end{lem}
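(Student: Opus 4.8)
The plan is to translate the order on $M_{R\leq 1}$ back into an inequality in $M$, after which everything is bookkeeping. The key claim is that for $a,b\in M$ one has $\bar{b}\leq\bar{a}$ in $M_{R\leq 1}$ if and only if $b\leq ra$ for some $r\in R$. One direction is immediate from Definition \ref{411}: $b\leq ra$ gives $\bar{b}\leq\overline{ra}\leq\bar{a}$. For the other, $\bar{b}\leq\bar{a}$ unwinds to $\overline{b+a}=\bar{a}$, and Lemma \ref{317} then yields $r,s\in R$ with $b+a\leq ra$ and $s(b+a)\leq a$; the first inequality gives $b\leq ra$. I expect this translation to be essentially all of the content.

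Granting the claim, $N=\{b\in M\mid\bar{b}\leq\bar{a}\}$ is exactly $\{b\in M\mid b\leq ra\text{ for some }r\in R\}$. This is a saturated $R$-submodule: it contains $0$ and $a$; it is closed under addition since $b\leq ra$ and $b'\leq r'a$ give $b+b'\leq(r+r')a$; it is closed under the $R$-action since $b\leq ra$ gives $sb\leq(sr)a$; and it is downward closed, hence saturated. It is \sfinite with generator $a$, which lies in $N$ because $\bar{a}\leq\bar{a}$. (One can also note this is the smallest saturated $R$-submodule of $M$ containing $a$.)

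For the converse, let $N'\subseteq M$ be a \sfinite saturated submodule and pick a generator $a\in N'$, so that every $b\in N'$ satisfies $b\leq ra$ for some $r\in R$; hence $N'\subseteq\{b\mid b\leq ra\text{ for some }r\in R\}$. Conversely, if $b\leq ra$ then $ra\in N'$ since $a\in N'$ and $N'$ is a submodule, and then $b\in N'$ since $N'$ is saturated; so $N'=\{b\mid b\leq ra\text{ for some }r\in R\}$, which by the key claim equals $\{b\mid\bar{b}\leq\bar{a}\}$, as desired.

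The main point requiring care is the correct unpacking of ``$\bar{b}\leq\bar{a}$'' as ``$\overline{b+a}=\bar{a}$'' so that Lemma \ref{317} applies, together with confirming that the generator $a$ used in each half genuinely lies in the module it is supposed to generate; neither is a serious obstacle, so I anticipate the proof to be short.
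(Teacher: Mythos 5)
Your proof is correct and follows essentially the same route as the paper: the core is the translation $\bar{b}\leq\bar{a}$ iff $b\leq ra$ for some $r\in R$, obtained by rewriting $\bar{b}\leq\bar{a}$ as $\overline{a+b}=\bar{a}$ and invoking Lemma \ref{317}, after which the characterization of $N$ and the converse are routine bookkeeping. Your version supplies slightly more detail in the final verification steps, which the paper compresses to ``it is clear.''
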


\begin{proof}Let $\bar{a},\bar{b}\in M_{R\leq 1}$.  Observe that $\bar{b}\leq \bar{a}$ if and only if $\overline{a+b}=\bar{a}$.  This in turn occurs if and only if there exist $r,s\in R$ with $a+b\leq ra$ and $a\leq s(a+b)$.  Since we can always take $s=1$, this is equivalent to the statement that $a+b\leq ra$ for some $r\in R$.  If $a+b\leq ra$ then $b\leq ra$, while if $b\leq ra$ then $a+b\leq (r+1)a$.  Hence $a+b\leq ra$ for some $r\in R$ if and only if $b\leq ra$ for some $r\in R$.  Hence $\bar{b}\leq \bar{a}$ if and only if there is some $r\in R$ with $b\leq ra$.

Observe that $\{b\in M\mid \bar{b}\leq \bar{a}\}=\{b\in M\mid b\leq ra\,\mathrm{for\,some}\,r\in R\}$.  It is clear that this is a {\sfinite} saturated submodule and that every {\sfinite} saturated submodule has this form.
\end{proof}

\begin{prop}\label{399}Let $R$ be an idempotent semiring and $A$ be an $R$-algebra.  Suppose $A$ is simple.  For any $x\in A$ (resp. $x\in R$) we use $\bar{x}$ to denote the image of $x$ in $A_{R\leq 1}$ (resp. $R_{R\leq 1}$).  Then $x\in A$ is quasiintegral over $R$ if and only if there is some nonzero $s\in A_{R\leq 1}$ with $s\bar{x}\leq s$.
\end{prop}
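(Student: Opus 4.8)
The plan is to reduce everything to the dictionary between {\sfinite} saturated submodules of $A$ and elements of $A_{R\le 1}$ supplied by Lemmas~\ref{317} and~\ref{423}. I would first record that $A_{R\le 1}$ is not merely an $R$-module but an idempotent semiring: by Lemma~\ref{317}, $\bar a=\overline{a'}$ implies $\overline{ab}=\overline{a'b}$ for every $b\in A$ (multiply the two defining inequalities by $b$), so the multiplication of $A$ descends and $\phi\colon A\to A_{R\le 1}$ is a surjective semiring homomorphism; in particular $\overline{xy}=\bar x\,\bar y$, which is what makes the product $s\bar x$ in the statement meaningful. I would also extract from the proof of Lemma~\ref{423} the concrete form $\{b\in A\mid \bar b\le\bar a\}=\{b\in A\mid b\le ra\text{ for some }r\in R\}$, and note the upshot: a {\sfinite} saturated $R$-submodule $M=\{b\mid b\le rm_0\}$ of $A$, with generator $m_0$ and class $s=\overline{m_0}$, satisfies $xM\subseteq M$ exactly when $s\bar x\le s$. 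The degenerate case $A=0$ is vacuous on both sides, so I assume $A\neq 0$.

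For the forward implication, suppose $x$ is quasiintegral over $R$. Using that $A$ is simple, Lemma~\ref{375} (and its proof) provides a witnessing module $M$ which may in addition be taken to be a saturated {\sfinite} $R$-submodule of $A$; write $M=\{b\mid \bar b\le\bar a\}$ with $\bar a=\overline{m_0}$ and put $s=\bar a$. Since $m_0\in M$ and $M$ is an $R\angles{x}$-submodule, $xm_0\in M$, so $s\bar x=\overline{xm_0}\le\overline{m_0}=s$. Faithfulness of $M$ forces $M\neq 0$, hence $M=\{b\mid \bar b\le s\}$ is nonzero, which (using $\bar b=0\iff b=0$, immediate from Lemma~\ref{317}) forces $s\neq 0$.

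For the converse, suppose $s\neq 0$ and $s\bar x\le s$. Surjectivity of $\phi$ gives $s=\bar a$ with $a\in A$, and $a\neq 0$ since $s\neq 0$. By simplicity of $A$ and Proposition~\ref{87}, choose $v\in A$ with $va\ge 1$. Unwinding $s\bar x\le s$ through the dictionary yields $ax\le r_1 a$ for some $r_1\in R$, hence $x^k a\le r_1^{\,k}a$ by induction, and therefore $x^k\le v(x^k a)\le r_1^{\,k}(va)$ for all $k\ge 0$. Now let $M$ be the {\sfinite} saturated $R$-submodule of $A$ generated by $va$, i.e.\ $M=\{b\in A\mid b\le r(va)\text{ for some }r\in R\}$. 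Then $1\le va$ gives $1\in M$; the bounds just obtained give $x^k\in M$ for all $k$, so $R\angles{x}\subseteq M$; and if $b\le r(va)$ then $xb\le rv(ax)\le rr_1(va)$, so $xM\subseteq M$. A saturated $R$-submodule closed under multiplication by $x$ is automatically an $R\angles{x}$-submodule (bound an element of $R\angles{x}$ by a polynomial in $x$ with coefficients in $R$), and it is faithful as such because it contains $1$. Hence $x$ is quasiintegral over $R$.

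The main obstacle is the converse: one has to manufacture a faithful {\sfinite} $R\angles{x}$-module out of the single inequality $s\bar x\le s$. The decisive device --- lifting $s$ to $a\in A$ and multiplying by some $v$ with $va\ge 1$, so that a copy of $1$ sits inside the module built from $va$ --- is exactly the trick used to prove Lemma~\ref{375}, and this is the one place where simplicity of $A$ is genuinely needed. The forward direction and the semiring structure on $A_{R\le 1}$ are routine by comparison; the only point requiring care there is the non-vanishing of $s$.
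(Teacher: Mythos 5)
Your proof is correct, and it differs from the paper's most visibly in the converse. In the forward direction both arguments travel through the dictionary of Lemma~\ref{423}: choose a saturated {\sfinite} $R\angles{x}$-submodule $M$, identify it with $\{b\mid \bar b\le s\}$, observe $xm_0\in M$ to get $s\bar x\le s$, and use faithfulness for $s\neq 0$. You route the saturation explicitly through Lemma~\ref{375} and its proof, while the paper simply asserts a saturated $M$ can be chosen (you can also just pass to the saturation of an arbitrary witnessing $M$, which preserves finiteness, the $R\angles{x}$-module structure, and faithfulness). For the converse, the paper takes $M=\{m\mid \bar m\le s\}$ directly, shows $xM\subseteq M$ from $\bar x s\le s$, promotes this to an $R\angles{x}$-module structure by the ``bound by a polynomial in $x$'' argument, and finishes with faithfulness via absence of zero divisors (a consequence of simplicity) together with $M\neq 0$. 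You instead re-run the $v$-with-$va\ge 1$ device from Lemma~\ref{375}: lift $s=\bar a$, choose $v$ with $va\ge 1$ by Proposition~\ref{87}, build $M'$ on $va$, and get faithfulness for free from $1\in M'$. Both uses of simplicity are legitimate; the paper's is shorter since it needn't import $1$ into $M$. Your preliminary observation that $A_{R\le 1}$ inherits a semiring structure and $\phi$ is a semiring homomorphism is a point the paper leaves implicit, and is worth recording for the statement $s\bar x\le s$ to parse.
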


\begin{proof}Suppose $s\bar{x}\leq s$.  Let $M=\{m\in A\mid \bar{m}\leq s\}$.  By lemma \ref{423}, $M$ is a saturated $R$-submodule of $K$ and is {\sfinite} as an $R$-module.  Note as well that if $m\in M$, then $\overline{xm}=\bar{x}\bar{m}\leq \bar{x}s\leq s$ so $xm\in M$.  As in the proof of Lemma \ref{375}, this implies that $M$ is an $R\angles{x}$-submodule of $A$.  $M$ is faithful because there are no zero divisors.  Hence $x$ is quasiintegral over $R$.

Suppose $x\in A$ is quasiintegral.  Choose a saturated $R\angles{x}$-submodule $M\subseteq A$ which is faithful as an $R\angles{x}$ submodule and {\sfinite} as an $R$-module.  There is some $s\in A_{R\leq 1}$ such that $M=\{m\in A\mid \bar{m}\leq s\}$.  Write $s=\bar{a}$.  Then $a\in M$ and so $xa\in M$.  Then $\bar{x}s=\bar{xa}\leq s$.  If $s=0$ then $M=0$ which contradicts the assumption of faithfulness.
\end{proof}

\begin{thm}\label{432}Let $R$ be an idempotent semiring.  Let $K$ be a simple $R$-algebra.  Let $x\in K$.  Then $x$ is quasiintegral over $R$ if and only if $v(\bar{x})\leq 1$ for every homomorphism $v:K_{R\leq 1}\rightarrow \Gamma_{\mathrm{max}}$ into a totally ordered idempotent semifield, where $\bar{x}\in K_{R\leq 1}$ is the class of $x$.
\end{thm}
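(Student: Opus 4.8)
The plan is to read off the theorem from two results already in hand: Proposition~\ref{399}, which translates quasiintegrality into a statement internal to $K_{R\leq 1}$, and Corollary~\ref{334}, which matches such an internal statement with the behaviour of valuations. The only ingredient not immediately available is that $K_{R\leq 1}$ is itself a simple idempotent semiring, so that Corollary~\ref{334} may be applied to it; this is the one point of the argument that requires an actual (if routine) verification, and I expect it to be the only real work.

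So the first step would be to check that $K_{R\leq 1}$ is a simple idempotent semiring. The universal map $\phi\colon K\to K_{R\leq 1}$ realizes $K_{R\leq 1}$ as a quotient of $K$ --- this is how it is constructed via the congruence in Lemma~\ref{317}, a congruence which also respects the multiplication of $K$, so that $K_{R\leq 1}$ inherits a semiring structure for which $\phi$ is a surjective homomorphism (the same structure already used implicitly in the proof of Proposition~\ref{399}). If $J\subseteq K_{R\leq 1}$ is a saturated ideal, then $\phi^{-1}(J)$ is a saturated ideal of $K$, since $\phi$ is an order-preserving homomorphism. As $K$ is simple, $\phi^{-1}(J)$ is $0$ or $K$; if it is $K$ then $J=\phi(K)=K_{R\leq 1}$ by surjectivity, and if it is $0$ then, again by surjectivity, every element of $J$ is the image under $\phi$ of an element of $\phi^{-1}(J)=\{0\}$, so $J=0$. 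Hence the only saturated ideals of $K_{R\leq 1}$ are $0$ and $K_{R\leq 1}$.

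With that established, the rest is a concatenation of equivalences. By Proposition~\ref{399} applied to the $R$-algebra $K$, the element $x\in K$ is quasiintegral over $R$ if and only if there is a nonzero $s\in K_{R\leq 1}$ with $s\bar x\leq s$. By Corollary~\ref{334} applied to the simple idempotent semiring $K_{R\leq 1}$ and its element $\bar x$, such an $s$ exists if and only if every homomorphism $v\colon K_{R\leq 1}\to\Gamma_{\max}$ into a totally ordered idempotent semifield satisfies $v(\bar x)\leq 1$. Chaining these two equivalences gives exactly the statement of the theorem.
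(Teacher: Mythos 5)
Your proof is correct and follows the same route as the paper's: combining Proposition~\ref{399} with Corollary~\ref{334} applied to $K_{R\leq 1}$. You are in fact a bit more careful than the paper, which invokes Corollary~\ref{334} on $K_{R\leq 1}$ without explicitly pausing to verify that $K_{R\leq 1}$ is a simple idempotent semiring; your check of that point (via $\phi^{-1}$ of saturated ideals and surjectivity of $\phi$) fills in a step the paper leaves implicit.
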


\begin{proof}First note that since $K$ is simple, it has no zero divisors.  Combine Corollary \ref{334} and Proposition \ref{399} to conclude that $x$ is quasiintegral over $R$ if and only if $v(\bar{x})\leq 1$ for every homomorphism $v:K_{R\leq 1}\rightarrow \Gamma_{\mathrm{max}}$ into a totally ordered idempotent semifield.  
\end{proof}

Observing that homomorphisms $v:K_{R\leq 1}\rightarrow \Gamma_{\mathrm{max}}$ are in bijective correspondence with homomorphisms $v:K\rightarrow \Gamma_{\mathrm{max}}$ which have the property that $v(r)\leq 1$ for all $r\in R$, we obtain the following result.

\begin{cor}Let $R$ be an idempotent semiring.  Let $K$ be a simple $R$-algebra.  Let $x\in K$.  Then $x$ is quasiintegral over $R$ if and only if every homomorphism $v:K\rightarrow \Gamma_{\mathrm{max}}$ into a totally ordered idempotent semifield which satisfies $v(r)\leq 1$ for all $r\in R$ also satisfies $v(x)\leq 1$.
\end{cor}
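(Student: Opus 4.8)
The plan is to deduce the statement directly from Theorem~\ref{432} by making precise the bijection asserted in the remark preceding it. Throughout, write $\phi\colon K\to K_{R\leq 1}$ for the universal contraction map of Definition~\ref{411} and $\bar x=\phi(x)$ for $x\in K$; recall that $\phi$ is surjective (this follows from Lemma~\ref{317}, which identifies $K_{R\leq 1}$ with a quotient of $K$).

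First I would isolate the key elementary observation: a semiring homomorphism $v\colon K\to\Gamma_{\max}$ into a totally ordered idempotent semifield satisfies $v(rz)\leq v(z)$ for all $r\in R$, $z\in K$ \emph{if and only if} $v(r)\leq 1$ for all $r\in R$. The forward implication follows by setting $z=1$. For the converse, $v(rz)=v(r)v(z)$, which is $\leq v(z)$ trivially when $v(z)=0$, while when $v(z)\neq 0$ multiplication by $v(z)$ is order-preserving on $\Gamma_{\max}$, so $v(r)\leq 1$ yields $v(r)v(z)\leq v(z)$. Combined with the universal property in Definition~\ref{411}, this shows that $\tilde v\mapsto \tilde v\circ\phi$ is a bijection between homomorphisms $\tilde v\colon K_{R\leq 1}\to\Gamma_{\max}$ and homomorphisms $v\colon K\to\Gamma_{\max}$ with $v(r)\leq 1$ for all $r\in R$: given such a $v$ it factors uniquely as $v=\tilde v\circ\phi$, and conversely $\tilde v\circ\phi$ always has the required property since $\phi$ itself satisfies $\phi(rz)\leq\phi(z)$. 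Since $\phi$ is surjective and $v(x)=\tilde v(\bar x)$ under this correspondence, the condition ``$v(x)\leq 1$'' on $v$ is equivalent to the condition ``$\tilde v(\bar x)\leq 1$'' on $\tilde v$.

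Putting these together: every homomorphism $v\colon K\to\Gamma_{\max}$ into a totally ordered idempotent semifield with $v(r)\leq 1$ for all $r\in R$ satisfies $v(x)\leq 1$ if and only if every homomorphism $\tilde v\colon K_{R\leq 1}\to\Gamma_{\max}$ into a totally ordered idempotent semifield satisfies $\tilde v(\bar x)\leq 1$, and by Theorem~\ref{432} the latter holds exactly when $x$ is quasiintegral over $R$.

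There is no substantial difficulty, as the result is a formal consequence of Theorem~\ref{432}; the only point that warrants attention is that Definition~\ref{411} is phrased for $R$-modules while it is invoked here for the $R$-algebra $K$. One should check that $K_{R\leq 1}$ inherits a semiring structure for which $\phi$ is a semiring homomorphism and for which the factorization $\tilde v$ is a semiring homomorphism rather than merely a module map. This is automatic because, by Lemma~\ref{317}, $K_{R\leq 1}$ is the quotient of $K$ by the congruence $x\sim y\iff(\exists\,r,s\in R)\ x\leq ry,\ sx\leq y$, which is a semiring congruence when $K$ is a semiring. A wholly routine secondary point is the handling of the absorbing element $0\in\Gamma_{\max}$ in the elementary observation above.
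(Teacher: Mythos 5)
Your argument is correct and takes essentially the same route the paper does: the paper derives this corollary from Theorem~\ref{432} via the one-line remark that homomorphisms $K_{R\leq 1}\to\Gamma_{\max}$ correspond bijectively to homomorphisms $K\to\Gamma_{\max}$ bounded by $1$ on $R$, and your write-up is simply a careful verification of that bijection (including the point that the $R$-contraction of the semiring $K$ is again a semiring, since the congruence of Lemma~\ref{317} respects multiplication).
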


\begin{thm}Let $R$ be an idempotent semiring.  Let $K$ be a unitgenerated $R$-algebra.  The quasiintegral closure of $R$ in $K$ is the intersection of all valuation subsemirings which contain $R$.
\end{thm}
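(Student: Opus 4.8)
The plan is to read the statement off from two results already in hand: the dictionary between valuation subsemirings of a unitgenerated idempotent semiring $K$ and homomorphisms $v\colon K\to\Gamma_{\max}$ into totally ordered idempotent semifields (developed in Section \ref{s143}), together with the valuative criterion for quasiintegrality given by the corollary to Theorem \ref{432}. First I would note that $K$, being unitgenerated, is simple, so that corollary applies: an element $x\in K$ is quasiintegral over $R$ if and only if every homomorphism $v\colon K\to\Gamma_{\max}$ into a totally ordered idempotent semifield with $v(r)\le 1$ for all $r$ in the image of $R$ also satisfies $v(x)\le 1$. It then suffices to match up ``valuation subsemiring of $K$ containing $R$'' with ``such a homomorphism $v$'', in both directions.

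For the inclusion of the quasiintegral closure in the intersection, I would fix $x$ quasiintegral over $R$ and an arbitrary valuation subsemiring $S\subseteq K$ containing the image of $R$. By Proposition \ref{193} and the theorem immediately following it, $S$ induces a homomorphism $v\colon K\to\Gamma_{\max}$ with $\Gamma=K^\times/S^\times$ totally ordered and $S=\{y\in K\mid v(y)\le 1\}$; since the image of $R$ lies in $S$ we get $v(r)\le 1$ for all $r$ in $R$, hence $v(x)\le 1$ by the criterion, i.e.\ $x\in S$. As $S$ is arbitrary, $x$ lies in the intersection.

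For the reverse inclusion, I would fix $x$ lying in every valuation subsemiring of $K$ that contains $R$ and then verify the criterion. Given any homomorphism $v\colon K\to\Gamma_{\max}$ into a totally ordered idempotent semifield with $v(r)\le 1$ for all $r$ in $R$, the set $S=\{y\in K\mid v(y)\le 1\}$ is --- because $K$ is unitgenerated --- a valuation subsemiring of $K$ (the relevant unlabelled proposition of Section \ref{s143}), and it contains the image of $R$; by hypothesis $x\in S$, so $v(x)\le 1$. Since $v$ was arbitrary, $x$ is quasiintegral over $R$. I do not anticipate a genuine obstacle here: all the substance was proved in the earlier sections, and the only thing to watch is that the two directions of the correspondence lean on different earlier results --- Proposition \ref{193} (together with the theorem after it) in one direction and the ``$\{v\le 1\}$ is a valuation subsemiring'' proposition in the other --- each of which needs precisely the standing assumption that $K$ is unitgenerated. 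One may also observe that the family of valuation subsemirings containing $R$ is nonempty, as $K$ is itself such a subsemiring, so the intersection is a genuine one; the equality then incidentally exhibits the quasiintegral closure as a saturated subsemiring of $K$.
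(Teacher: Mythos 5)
Your proof is correct and is essentially the intended one: the paper gives this theorem no displayed proof precisely because it follows by combining the corollary to Theorem \ref{432} (the valuative criterion for quasiintegrality over a simple $R$-algebra, applicable here since unitgenerated implies simple) with the Section \ref{s143} dictionary --- Proposition \ref{193} together with the theorem after it in one direction, and the unlabelled ``$\{v\le 1\}$ is a valuation subsemiring'' proposition in the other. You have identified exactly the right two-way translation and the role of the unitgenerated hypothesis in each direction.
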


We conclude this section with another application of Proposition \ref{399}.

\begin{prop}\label{307}Let $R$ be an idempotent semiring.  Let $A$ be an $R$-algebra which is a semifield.  Let $x\in A$ be quasiintegral over $R$.  Then $x$ is integral over $R$.
\end{prop}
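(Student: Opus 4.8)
The strategy is to extract from the quasiintegrality of $x$ a single inequality $x \leq r$ holding in $A$, where $r$ denotes the image in $A$ of some element of $R$. Granted this, multiplying by $x$ --- which preserves the canonical order $a \leq b \iff a + b = b$, as does multiplication by any element of an idempotent semiring (and $A$ is idempotent, being an algebra over the idempotent semiring $R$) --- gives $x^{2} \leq rx = 0 + rx$, with $0, r \in R$. The characterization of integrality established above then shows at once that $x$ is integral over $R$, so the whole problem reduces to producing that one inequality.

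To produce it, I would work directly from Definition~\ref{362}, without needing Lemma~\ref{375} or Proposition~\ref{399}. Choose an $R\angles{x}$-submodule $M \subseteq A$ which is {\sfinite} as an $R$-module and faithful as an $R\angles{x}$-module. Applying faithfulness to $1$ shows $M \neq 0$. Since $M$ is {\sfinite} over $R$, there is an $m \in M$ such that every $\theta \in M$ satisfies $\theta \leq rm$ for some $r \in R$; as $M \neq 0$ this forces $m \neq 0$, and since $A$ is a semifield, $m$ is invertible. Because $x \in R\angles{x}$ and $M$ is an $R\angles{x}$-submodule, $xm \in M$, hence $xm \leq rm$ for some $r \in R$; multiplying by $m^{-1}$ gives $x \leq r$, which is the inequality sought.

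The whole content of the argument is this last step, so the ``main obstacle'' is simply noticing it: rather than seeking a Cayley--Hamilton substitute or a noetherian finiteness hypothesis as in Proposition~\ref{383}, one inverts the generator $m$ of the witnessing module to pull the relevant inequality back into $R$ --- and the invertibility of $m$ is the sole place the semifield hypothesis is used. The only loose end is the degenerate case $A = \{0\}$, which is harmless: a semifield has $1 \neq 0$, and in any event $\{0\}$ admits no faithful module, so the assumption that $x$ is quasiintegral is then vacuous.
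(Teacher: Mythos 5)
Your proof is correct, and it takes a genuinely more elementary route than the paper's. The paper passes to the $R$-contraction $A_{R\leq 1}$: it invokes Proposition~\ref{399} (which in turn rests on Lemmas~\ref{423} and~\ref{375}) to produce a nonzero $s\in A_{R\leq 1}$ with $s\bar x\leq s$, notes $A_{R\leq 1}$ is again a semifield so $s$ is invertible and $\bar x\leq 1$, and then applies Lemma~\ref{317} to lift this back to an inequality $x\leq r$ with $r\in R$; finally it argues that $R\angles{x}$ is the saturated submodule generated by~$1$. You instead stay entirely inside $A$: you take the witnessing $R\angles{x}$-submodule $M$ from Definition~\ref{362}, observe $M\neq 0$ by faithfulness (applied to $1\in R\angles{x}$), extract the {\sfinite} generator $m$, note it is nonzero and hence invertible since $A$ is a semifield, and divide $xm\leq rm$ by $m$ to get $x\leq r$ directly. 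Your closing step ($x^2\leq rx$, so take $n=2$, $c_0=0$, $c_1=r$ in the polynomial characterization) also differs cosmetically from the paper's (bounding $R\angles{x}$ by elements of $R$), but both are fine. What your approach buys is a shorter, self-contained argument that avoids the $R$-contraction machinery entirely and isolates exactly where the semifield hypothesis enters --- inverting the module generator $m$ --- whereas the paper's route factors through Proposition~\ref{399}, which has the advantage of applying to general simple algebras and is reused elsewhere (e.g.\ Lemma~\ref{519}).

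Two small remarks, neither affecting correctness: the multiplication step could be stated without first multiplying by $x$, since $x\leq r$ already implies every element of $R\angles{x}$ is dominated by an element of $R$; and the final aside about $A=\{0\}$ is indeed moot, as a semifield has $1\neq 0$ and the faithfulness hypothesis already rules out $M=0$.
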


\begin{proof}Since $A$ is a semifield, so is $A_{R\leq 1}$.  By Proposition \ref{399}, there is some nonzero $s\in A_{R\leq 1}$ such that $s\bar{x}\leq s$.  Since $s$ is a unit, $\bar{x}\leq 1$.  Equivalently, $\overline{x+1}=1$.  By Lemma \ref{317}, there is some $r\in R$ such that $x+1\leq r$.  This implies $x\leq r$.  This in turn implies that every element of $R\angles{x}$ is bounded by some element of $R$ so $R\angles{x}$ is generated by $1$.  Hence $x$ is integral.
\end{proof}

\section{The space of valuation orders}

\begin{defn}\label{448}Let $R$ be an idempotent semiring.  A \emph{valuation order} on $R$ is a relation $\preceq$ such that the following hold.

\begin{enumerate}
 \item\label{451} For all $x,y,z\in R$, if $x\preceq y\preceq z$ then $x\preceq z$.
 \item\label{452} For all $x,y\in R$, $x\preceq y$ or $y\preceq x$.
 \item\label{453} For all $x,y,z\in R$, if $x\preceq y$ then $x+z\preceq y+z$.
 \item\label{454} For all $x,y\in R$, if $x\leq y$ then $x\preceq y$.
 \item\label{455} For all $x,y,z\in R$, if $x\preceq y$ then $xz\preceq yz$.
 \item\label{456} For all $x,y,z\in R$, if $xz\preceq yz$ then either $x\preceq y$ or $z\preceq 0$.
\end{enumerate}

\end{defn}

\begin{defn}Let $X$ be a set.  We let $\mathcal{P}(X)$ be the set of subsets of $X$.  For any $x\in X$, we let $\mathrm{ev}_x:\mathcal{P}(X)\rightarrow \{0,1\}$ given by $\mathrm{ev}_x(Y)=1$ if $x\in Y$ and $0$ otherwise.  $\mathcal{P}(X)$ is equipped with the weakest topology such that each map $\mathrm{ev}_x$ is continuous.
\end{defn}

$\mathcal{P}(X)$ may be identified with a product of copies of $\{0,1\}$, so is compact.

\begin{defn}Let $R$ be an idempotent semiring.  We may view valuation orders on $R$ as subsets of $R\times R$ by identifying $\preceq$ with $\{(x,y)\in R^2\mid x\preceq y\}$.  The \emph{space of valuation orders} on $R$ is the subspace of $\mathcal{P}(R\times R)$ whose points are the valuation orders on $R$.
\end{defn}

A ring theoretic analogue of the following proposition is used as a lemma in the proof that adic spaces are spectral.  The space of valuation orders is roughly the same as the valuation spectrum $\mathop{Spv}R$ equipped with its constructible topology.

\begin{prop}Let $R$ be an idempotent semiring.  The space of valuation orders on $R$ is compact.
\end{prop}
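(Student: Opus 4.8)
The plan is to exhibit the space of valuation orders as a closed subspace of $\mathcal{P}(R\times R)$ and then invoke the compactness of $\mathcal{P}(R\times R)$ observed above, together with the fact that a closed subspace of a compact space is compact. So the whole task reduces to showing that the set of subsets $\preceq\,\subseteq R\times R$ which satisfy all six conditions of Definition \ref{448} is closed in $\mathcal{P}(R\times R)$.

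The key point is that, for each fixed pair $(a,b)\in R\times R$, both $\{P\in\mathcal{P}(R\times R)\mid (a,b)\in P\}=\mathrm{ev}_{(a,b)}^{-1}(1)$ and its complement are clopen, since $\mathrm{ev}_{(a,b)}$ is continuous and $\{0\},\{1\}$ are clopen in $\{0,1\}$. Each axiom of Definition \ref{448} has the shape ``for all tuples in $R$, a finite conjunction of (non)membership statements about the relation implies another membership statement''. For each fixed tuple this implication carves out a finite union of sets of the form $\mathrm{ev}_{(a,b)}^{-1}(\epsilon)$ — e.g. condition \ref{451} carves out, for each $(x,y,z)$, the set $\mathrm{ev}_{(x,y)}^{-1}(0)\cup\mathrm{ev}_{(y,z)}^{-1}(0)\cup\mathrm{ev}_{(x,z)}^{-1}(1)$ — hence a clopen set; intersecting over all relevant tuples shows that the locus where that axiom holds is closed. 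Condition \ref{454} is the same, except that the implication is only imposed for pairs $(x,y)$ lying in the fixed subset $\{(x,y)\mid x\leq y\}$ of $R\times R$, which changes nothing.

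The space of valuation orders is the intersection of these six closed sets, hence closed, hence compact. I do not anticipate a real obstacle; the only thing to be careful about is the translation of each axiom into an intersection of clopen sets, and in particular to notice that axioms such as \ref{451} and \ref{456}, in which the relation $\preceq$ appears in both hypothesis and conclusion, still define clopen conditions one tuple at a time — membership of a given pair in $P$ is a clopen condition on $P$ no matter which side of the implication it sits on.
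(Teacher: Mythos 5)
Your proposal is correct and follows essentially the same approach as the paper's proof: identify the space of valuation orders as a closed subspace of the compact space $\mathcal{P}(R\times R)$ by expressing the locus where each axiom holds as an intersection, over all tuples, of finite unions of sets of the form $\mathrm{ev}_{(a,b)}^{-1}(\epsilon)$. The paper even uses your exact decomposition for Axiom \ref{451}.
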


\begin{proof}We write subsets of $R\times R$ as relations on $R$; In other words we identify a set $S\subseteq R\times R$ with the relation $\preceq$ defined by $x\preceq y$ if $(x,y)\in S$.  Hence $\mathcal{P}(R\times R)$ is the set of all relations on $R$.  Let $X$ be the space of valuation orders on $R$.  It suffices to show $X$ is closed in $\mathcal{P}(R\times R)$.  Note that $X=X_1\cap\ldots\cap X_6$ where $X_i\subseteq\mathcal{P}(R\times R)$ is the set of relations satisfying the $i$th axiom of Definition \ref{448}.  Hence it suffices to show each $X_i$ is closed.  For any $x,y\in R$, $\mathrm{ev}_{(x,y)}^{-1}(0)\cup\mathrm{ev}_{(y,z)}^{-1}(0)\cup\mathrm{ev}_{(x,z)}^{-1}(1)$ is a finite union of closed sets, so is closed. Then $X_1=\bigcap_{x,y,z\in R}(\mathrm{ev}_{(x,y)}^{-1}(0)\cup\mathrm{ev}_{(y,z)}^{-1}(0)\cup\mathrm{ev}_{(x,z)}^{-1}(1))$ is closed.  $X_2=\bigcap_{x,y\in R}(\mathrm{ev}_{(x,y)}^{-1}(1)\cup\mathrm{ev}_{(y,x)}^{-1}(1))$ is also closed.  $X_3=\bigcap_{x,y,z\in R} (\mathrm{ev}_{(x,y)}^{-1}(0)\cup\mathrm{ev}_{(x+z,y+z)}^{-1}(1))$ is likewise closed, as is $X_4=\bigcap_{x,y\in R;x\leq y}\mathrm{ev}_{(x,y)}^{-1}(1)$.  Similar arguments apply to $X_5$ and $X_6$.
\end{proof}

We now investigate the link between valuation orders and valuations.

\begin{prop}Let $\Gamma$ be a totally ordered abelian group.  Let $R$ be an idempotent semiring.  Let $v:R\rightarrow \Gamma_{\mathrm{max}}$ be a homomorphism.  Write $x\preceq y$ to mean $v(x)\leq v(y)$.  Then $\preceq$ is a valuation order.
\end{prop}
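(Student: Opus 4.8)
The plan is to verify the six axioms of Definition \ref{448} directly, using only that $\Gamma_{\max}$ is a totally ordered idempotent semifield, that $v$ is a semiring homomorphism (so it respects $0$, $1$, addition and multiplication), and the elementary fact that in any idempotent semiring the canonical order is compatible with both operations: if $a\leq b$ then $a+c\leq b+c$ and $ac\leq bc$, since $a+b=b$ forces $(a+c)+(b+c)=(a+b)+c=b+c$ and $ac+bc=(a+b)c=bc$. I will apply this fact inside $\Gamma_{\max}$ throughout.

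Axioms \ref{451} and \ref{452} (transitivity and totality) are immediate, since $x\preceq y$ means by definition $v(x)\leq v(y)$ and $\leq$ on $\Gamma_{\max}$ is a total order. For axiom \ref{453}: if $x\preceq y$ then $v(x)\leq v(y)$, hence $v(x+z)=v(x)+v(z)\leq v(y)+v(z)=v(y+z)$ by additive monotonicity, so $x+z\preceq y+z$. For axiom \ref{454}: if $x\leq y$ in $R$ then $x+y=y$, and applying $v$ gives $v(x)+v(y)=v(y)$, i.e. $v(x)\leq v(y)$, i.e. $x\preceq y$. For axiom \ref{455}: if $x\preceq y$ then $v(x)\leq v(y)$, hence $v(xz)=v(x)v(z)\leq v(y)v(z)=v(yz)$ by multiplicative monotonicity, so $xz\preceq yz$.

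The one axiom that needs a little more than monotonicity is \ref{456}. Suppose $xz\preceq yz$, i.e.\ $v(x)v(z)\leq v(y)v(z)$. If $v(z)=0=v(0)$, then $z\preceq 0$ and we are done. Otherwise $v(z)$ is a nonzero element of $\Gamma_{\max}$, hence invertible in $\Gamma$; multiplying the inequality $v(x)v(z)\leq v(y)v(z)$ by $v(z)^{-1}$ — again a monotone operation — and using $v(z)v(z)^{-1}=1$ yields $v(x)\leq v(y)$, that is $x\preceq y$. This exhausts all six axioms, so $\preceq$ is a valuation order.

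I do not expect any genuine obstacle here: the argument is a routine check, and the only point that requires care is the case distinction in axiom \ref{456} on whether $v(z)$ vanishes, which is exactly the disjunction ``$x\preceq y$ or $z\preceq 0$'' that axiom is built to accommodate.
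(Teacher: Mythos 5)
Your proof is correct and follows essentially the same route as the paper: verify each axiom of Definition \ref{448} directly from the fact that $v$ is a semiring homomorphism into the totally ordered cancellative semiring $\Gamma_{\max}$. If anything, you are more careful than the paper on Axioms \ref{453} and \ref{455}, where the paper's text only treats the case $v(x)=v(y)$ rather than the full hypothesis $v(x)\leq v(y)$; your appeal to additive and multiplicative monotonicity of $\leq$ in $\Gamma_{\max}$ handles the general case cleanly. For Axiom \ref{456} your invertibility-of-$v(z)$ argument is the same content as the paper's appeal to cancellativity of $\Gamma_{\max}$.
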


\begin{proof}Let $x,y,z\in R$.  If $v(x)\leq v(y)\leq v(z)$ then $v(x)\leq v(z)$ so Axiom \ref{451} of Definition \ref{448} holds.  Since $\Gamma_{\mathrm{max}}$ is totally ordered, $v(x)\leq v(y)$ or $v(y)\leq v(x)$ so Axiom \ref{452} holds.  If $v(x)=v(y)$, then $v(x+z)=v(x)+v(z)=v(y)+v(z)=v(y+z)$ so Axiom \ref{453} holds.  If $x\leq y$ then $v(x)\leq v(y)$ so Axiom \ref{454} holds.  If $v(x)=v(y)$ then $v(xz)=v(yz)$ so Axiom \ref{455} holds.  If $v(xz)\leq v(yz)$ then $v(x)v(z)\leq v(y)v(z)$, and since $\Gamma_{\mathrm{max}}$ is cancellative, either $v(x)\leq v(y)$ or $v(z)=0$.  Hence Axiom \ref{456} holds.
\end{proof}

\begin{prop}Let $R$ be an idempotent semiring.  Let $\preceq$ be a valuation order.  Let $\sim$ be the relation given by declaring that $x\sim y$ when $x\preceq y\preceq x$.  Then $R/\mathnormal{\sim}$ is a totally ordered cancellative idempotent semiring.  Furthermore, the canonical order on $R/\mathnormal{\sim}$ agrees with that induced by $\preceq$.
\end{prop}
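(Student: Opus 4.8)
The plan is to run through four stages: (i) show that $\sim$ is a congruence, so that $R/\mathnormal{\sim}$ is an idempotent semiring; (ii) transport $\preceq$ to a total order on $R/\mathnormal{\sim}$; (iii) identify that order with the canonical order, which in particular shows the latter is total; and (iv) deduce multiplicative cancellativity from Axiom \ref{456}. Throughout, ``totally ordered'' is read in the sense of Theorem \ref{193b}, namely that the canonical order $a\leq b\iff a+b=b$ is total, so no hypothesis on $0\neq 1$ is required.

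For (i), reflexivity $x\preceq x$ comes from Axiom \ref{454} applied to $x\leq x$ (valid by idempotency), symmetry is built into the definition of $\sim$, and transitivity is two applications of Axiom \ref{451}. Compatibility with the operations is just as quick: if $x\sim y$ then Axiom \ref{453} gives $x+z\preceq y+z$ and $y+z\preceq x+z$, whence $x+z\sim y+z$, and similarly Axiom \ref{455} gives $xz\sim yz$; chaining these handles congruence in both arguments simultaneously. Hence $R/\mathnormal{\sim}$ is an idempotent semiring.

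For (ii), I would set $[x]\preceq[y]$ iff $x\preceq y$; well-definedness follows because replacing $x$ or $y$ by a $\sim$-equivalent element and composing with Axiom \ref{451} does not change whether the relation holds. The resulting relation is reflexive and transitive by Axiom \ref{451}, total by Axiom \ref{452}, and antisymmetric because $[x]\preceq[y]\preceq[x]$ unwinds to exactly $x\sim y$, i.e. $[x]=[y]$. Step (iii) is the crux: I claim $x+y\sim y\iff x\preceq y$. If $x\preceq y$, Axiom \ref{453} gives $x+y\preceq y+y=y$, while $y\leq x+y$ with Axiom \ref{454} gives $y\preceq x+y$, so $x+y\sim y$. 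Conversely, $x\leq x+y$ with Axiom \ref{454} gives $x\preceq x+y$, and if $x+y\sim y$ then $x+y\preceq y$, so $x\preceq y$ by Axiom \ref{451}. Thus the canonical order on $R/\mathnormal{\sim}$ coincides with the order induced by $\preceq$, settling the ``furthermore'' clause and showing $R/\mathnormal{\sim}$ is totally ordered.

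For (iv), suppose $[x][z]=[y][z]$ with $[z]\neq[0]$; this says $xz\sim yz$ and $z\not\sim 0$. Since $0\leq z$ always, Axiom \ref{454} gives $0\preceq z$, so $z\not\sim 0$ forces $z\not\preceq 0$. Applying Axiom \ref{456} to $xz\preceq yz$ then yields $x\preceq y$ (the escape clause $z\preceq 0$ being excluded), and symmetrically $yz\preceq xz$ yields $y\preceq x$; hence $x\sim y$ and $[x]=[y]$. The one place that needs genuine care is precisely this last step --- using $[z]\neq[0]$ to kill the ``$z\preceq 0$'' alternative in Axiom \ref{456} --- since everything else is a mechanical unwinding of the six axioms of Definition \ref{448}.
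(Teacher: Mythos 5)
Your proposal is correct and follows essentially the same route as the paper: show $\sim$ is a congruence via the axioms, establish the key equivalence $x+y\sim y\iff x\preceq y$ to identify the canonical order with the order induced by $\preceq$ (whence totality follows from Axiom~\ref{452}), and derive cancellativity from Axiom~\ref{456} after noting that $z\not\sim 0$ is equivalent to $z\not\preceq 0$. The only cosmetic difference is that you first transport $\preceq$ to an explicit order on the quotient and then match it with the canonical one, whereas the paper proves the two orders coincide directly.
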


\begin{proof}Because $\preceq$ is a preorder, $\sim$ is an equivalence relation.  Axioms \ref{453} and \ref{455} of Definition \ref{448} then imply that $\sim$ is a congruence.  Hence $R/\mathnormal{\sim}$ is an idempotent semiring.  For any element $x\in R$ we shall write $\bar{x}$ for the corresponding element of $R/\mathnormal{\sim}$.  

Let $x,y\in R$.  Suppose $\bar{x}\leq \bar{y}$. Then $\overline{x+y}=\bar{y}$.  Hence there is some $z\sim y$ such that $x\leq z$ (e.g. take $z=x+y$).  Then $x\preceq z\preceq y$ by Axiom \ref{454}.  Conversely, suppose $x\preceq y$.  Then $x+y\preceq y+y=y$.  On the other hand $y\leq x+y$ so $y\preceq x+y$.  Hence $x+y\sim y$. This means $\bar{x}+\bar{y}=\bar{y}$, and hence $\bar{x}\leq \bar{y}$.  This implies that the order induced by $\preceq$ is the canonical order.

Axiom \ref{456} implies that if $xz\sim yz$ then $x\sim y$ or $z\preceq 0$.  If $z\preceq 0$ then $\bar{z}\leq 0$ so $\bar{z}= 0$.  Hence if $\bar{x}\bar{z}=\bar{y}\bar{z}$ then $\bar{x}=\bar{y}$ or $\bar{z}=0$.  Hence $R/\mathnormal{\sim}$ is cancellative.  Axiom \ref{452} implies that $R/\mathnormal{\sim}$ is totally ordered.
\end{proof}

\begin{cor}Let $R$ be an idempotent semiring.  Let $\preceq$ be a valuation order.  Then there is an totally ordered abelian group $\Gamma$ and a homomorphism $v:R\rightarrow \Gamma_{\mathrm{max}}$ such that $x\preceq y$ if and only if $v(x)\leq v(y)$.
\end{cor}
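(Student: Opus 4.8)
The plan is to read off essentially everything from the preceding proposition and then pass to a semifield of fractions. Set $x\sim y$ when $x\preceq y\preceq x$; by that proposition $S:=R/{\sim}$ is a totally ordered cancellative idempotent semiring, and, writing $q\colon R\to S$ for the quotient map, the canonical order on $S$ is precisely the order induced by $\preceq$. Pulling this back through $q$, one has $x\preceq y$ in $R$ if and only if $q(x)\le q(y)$ in $S$ (both implications are already contained in the proof of that proposition). Thus it remains only to embed $S$ in an order-faithful way into a semiring of the form $\Gamma_{\max}$.

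For that I would form $\mathrm{Frac}(S)$, the localization of $S$ at its set of nonzero elements, exactly as in the proof of Proposition \ref{236}. (If $S=0$ the corollary is trivial, with $\Gamma$ the trivial group, so we may assume $0\ne 1$ in $S$.) Because $S$ is cancellative, $\mathrm{Frac}(S)$ is again a cancellative idempotent semiring, in fact a semifield, and for nonzero $b,d\in S$ one checks that $a/b\le c/d$ in $\mathrm{Frac}(S)$ if and only if $ad\le cb$ in $S$; specializing to $b=d=1$ shows the localization map $\iota\colon S\to\mathrm{Frac}(S)$ satisfies $\iota(a)\le\iota(b)$ iff $a\le b$, and since $S$ is totally ordered so is $\mathrm{Frac}(S)$. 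By the remarks in the introduction, $\mathrm{Frac}(S)$, being a totally ordered idempotent semifield, is isomorphic to $\Gamma_{\max}$ for $\Gamma=\mathrm{Frac}(S)^{\times}$.

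Finally I would take $v=\iota\circ q\colon R\to\Gamma_{\max}$, a homomorphism of semirings as a composite of homomorphisms. For $x,y\in R$ this gives
\[v(x)\le v(y)\iff \iota(q(x))\le\iota(q(y))\iff q(x)\le q(y)\iff x\preceq y,\]
the middle equivalence by the order-faithfulness of $\iota$ and the last by the preceding proposition, which is the conclusion of the corollary.

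I do not expect a genuine obstacle: the substantive work is done in the preceding proposition, which already produces a totally ordered cancellative idempotent semiring carrying the relation $\preceq$ as its order. The one point requiring care is that $\iota$ \emph{reflects} the order and not merely preserves it, since the direction of the biconditional that recovers $x\preceq y$ from $v(x)\le v(y)$ rests on exactly this, and it is where cancellativity of $S$ gets used.
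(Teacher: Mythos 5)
Your proof is correct and matches the paper's own argument essentially line for line: pass to the quotient $R/{\sim}$ by the equivalence relation $x\sim y\iff x\preceq y\preceq x$, invoke the preceding proposition to see it is a totally ordered cancellative idempotent semiring whose canonical order reflects $\preceq$, embed it in its semifield of fractions (which is $\Gamma_{\max}$ for $\Gamma$ its unit group), and take $v$ to be the composite. The only addition on your side is the explicit verification that the localization map reflects order, which the paper leaves implicit; the degenerate case $S=0$ that you flag is glossed over in the paper as well (and, incidentally, in that case no such $v$ exists since any homomorphism to $\Gamma_{\max}$ separates $0$ and $1$, so it is really a gap in Definition \ref{448} rather than a trivial subcase).
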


\begin{proof}Let $\sim$ be the equivalence relation induced by $\preceq$.  Then $R/\mathnormal{\sim}$ is a totally ordered cancellative idempotent semiring and $\mathop{Frac}(R/\mathnormal{\sim})$ is a totally ordered idempotent semifield.  Let $\Gamma=\mathop{Frac}(R/\mathnormal{\sim})^\times$ so $\mathop{Frac}(R/\mathnormal{\sim})=\Gamma_{\mathrm{max}}$.  Define $v$ to be the composition of the quotient map $\pi:R\rightarrow R/\mathnormal{\sim}$ and the inclusion $i:R/\mathnormal{\sim}\rightarrow \mathop{Frac}(R/\mathnormal{\sim})$.  Then $v(x)\leq v(y)$ if and only if $\pi(x)\leq \pi(y)$ which in turn holds if and only if $x\preceq y$.
\end{proof}

\begin{defn}Let $R$ be an idempotent semiring.  Let $S\subseteq R\times R$.  $S$ is \emph{admissible} if there is a totally ordered abelian group $\Gamma$, and a homomorphism $v:R\rightarrow \Gamma_{\mathrm{max}}$ such that $v(y)<v(x)$ for all $(x,y)\in S$.
\end{defn}

\begin{thm}\label{505}Let $R$ be an idempotent semiring.  Let $S\subseteq R\times R$.  Suppose each finite subset of $S$ is admissible.  Then $S$ is admissible.
\end{thm}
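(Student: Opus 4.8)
The plan is to derive this by a compactness argument from the Proposition above asserting that the space of valuation orders on $R$ is compact. The first step is to reinterpret admissibility in terms of valuation orders. Recall from the Proposition and Corollary above that valuation orders on $R$ are exactly the relations of the form $x \preceq y \iff v(x)\le v(y)$ for homomorphisms $v\colon R\to \Gamma_{\max}$ into totally ordered idempotent semifields. For such a $v$ and a pair $(x,y)\in R\times R$, the inequality $v(y)<v(x)$ holds if and only if $\lnot\big(v(x)\le v(y)\big)$, i.e. if and only if $(x,y)$ does not belong to the associated valuation order when the latter is viewed as a subset of $R\times R$; this uses only that $\Gamma_{\max}$ is totally ordered. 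Consequently, a subset $T\subseteq R\times R$ is admissible if and only if there is a point $\preceq$ of the space of valuation orders with $T\cap{\preceq}=\emptyset$.

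The second step packages this topologically. Let $X\subseteq\mathcal{P}(R\times R)$ be the space of valuation orders, and for each $p\in R\times R$ put $C_p=X\cap\mathrm{ev}_p^{-1}(0)$, the set of valuation orders to which $p$ does not belong. Since $\mathrm{ev}_p^{-1}(0)$ is closed (indeed clopen) in $\mathcal{P}(R\times R)$, each $C_p$ is closed in $X$. By the first step, for any $T\subseteq R\times R$ we have that $T$ is admissible if and only if $\bigcap_{p\in T}C_p\neq\emptyset$. In particular, the hypothesis that every finite subset $F\subseteq S$ is admissible says exactly that every finite subfamily of $\{C_p:p\in S\}$ has nonempty intersection (for $F$ nonempty this is the hypothesis; for $F=\emptyset$ the intersection is $X$, which is nonempty since, for instance, the full relation $R\times R$ satisfies the axioms of Definition \ref{448}).

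Finally, since $X$ is compact by the Proposition above and the sets $C_p$ are closed in $X$, the finite intersection property yields $\bigcap_{p\in S}C_p\neq\emptyset$; any valuation order lying in this intersection, together with its associated homomorphism into a totally ordered idempotent semifield, witnesses that $S$ is admissible. The only point requiring genuine care is the equivalence in the first step --- matching the strict inequality $v(y)<v(x)$ with the negation of the total preorder attached to $v$ --- which is immediate from totality of $\Gamma_{\max}$; everything else is the standard fact that in a compact space a family of closed sets with the finite intersection property has nonempty intersection, so the only content beyond point-set topology is already contained in the earlier results on valuation orders.
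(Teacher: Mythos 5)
Your proof is correct and follows essentially the same route as the paper's: translate admissibility into the statement that the closed sets $\mathrm{ev}_{(x,y)}^{-1}(0)$, $(x,y)\in S$, have nonempty intersection inside the (compact) space of valuation orders, invoke the finite intersection property, and then use the correspondence between valuation orders and homomorphisms into totally ordered idempotent semifields to recover a witness for $S$. The only cosmetic differences are that you index the closed sets by individual pairs rather than by finite subsets of $S$, and you note explicitly that the $F=\emptyset$ case is harmless (which the paper leaves implicit, and which is in any case vacuous since for $S=\emptyset$ the theorem's hypothesis and conclusion coincide).
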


\begin{proof}Let $T\subseteq S$ be finite.  Let $w_T$ be a homomorphism into a totally ordered idempotent semifield such that $w_T(y)<w_T(x)$ for all $(x,y)\in T$.  Then for all $(x,y)\in T$, we have $w_T(x)\not\leq w_T(y)$.  Let $\preceq_T$ be given by $x\preceq_T y$ if $w_T(x)\leq w_T(y)$.  Then for all $(x,y)\in T$, we have $x\not\preceq_T y$ so $\mathrm{ev}_{(x,y)}(\preceq_T)=0$.  Hence $\preceq_T\in \bigcap_{(x,y)\in T} \mathrm{ev}_{(x,y)}^{-1}(0)$.

For any finite subset $T\subseteq S$, we define a subset of the space of valuation orders by $A_T=\bigcap_{(x,y)\in T} \mathrm{ev}_{(x,y)}^{-1}(0)$.  The above paragraph shows each $A_T$ is nonempty.  Furthermore $A_{T_1}\cap\ldots\cap A_{T_n}=A_{T_1\cup\ldots\cup T_n}\neq \emptyset$.  Hence any finite intersection of the sets $A_T$ is nonempty.  By construction, $A_T$ is closed.  By the compactness of the space of valuation orders, $\bigcap_{T\subseteq S\,\mathrm{finite}} A_T\neq\emptyset$.  It is not hard to see that $\bigcap_{T\subseteq S\,\mathrm{finite}}A_T=\bigcap_{(x,y)\in S}\mathrm{ev}_{(x,y)}^{-1}(0)$, so this subset is nonempty.

Let $\preceq$ be a valuation order which lies in $\mathrm{ev}_{(x,y)}^{-1}(0)$ for all $(x,y)\in S$.  Let $w$ be a homomorphism into a totally ordered idempotent semifield such that $x\preceq y$ if and only if $w(x)\leq w(y)$.  Then for all $(x,y)\in S$ we have $x\not\preceq y$ so $w(x)\not\leq w(y)$ and $w(y)<w(x)$.  Hence $S$ is admissible.
\end{proof}

\section{Extensions of valuations}

We first consider extension of valuations in the case of an extension of idempotent semifields $K\rightarrow L$ where $K$ is totally ordered and the valuation on $K$ is the identity map.

\begin{lem}\label{519}Let $K$ be a totally ordered idempotent semifield.  Let $L$ be an idempotent semifield equipped with a homomorphism $f:K\rightarrow L$.  Let $\mathcal{O}_K=\{x\in K\mid x\leq 1\}$ and let $\mathcal{O}_L$ be its quasiintegral closure in $L$.  Then $\mathcal{O}_L=\{x\in L\mid x\leq 1\}$.  Furthermore $\mathcal{O}_L^\times=\{1\}$.
\end{lem}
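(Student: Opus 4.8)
The plan is to establish $\mathcal{O}_L = \{x\in L\mid x\le 1\}$ by two inclusions, using that $L$, being an idempotent semifield, is unitgenerated and hence simple, and --- crucially --- that the $R$-contraction $L_{R\le 1}$ is again a semifield. Throughout I write $R:=\mathcal{O}_K$ and regard $L$ as an $R$-algebra via $R\hookrightarrow K\xrightarrow{f}L$; since $f$ is a semiring homomorphism it preserves the canonical order, so the image of $R$ in $L$ is contained in $\{x\in L\mid x\le 1\}$.

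For the inclusion $\{x\in L\mid x\le 1\}\subseteq\mathcal{O}_L$, I would take $x\in L$ with $x\le 1$. Then every power of $x$ is $\le 1$, and since the image of $R$ is also $\le 1$, the saturated $R$-submodule $R\angles{x}$ generated by the powers of $x$ is contained in $\{y\in L\mid y\le 1\}$; as $1\in R\angles{x}$ and $\{y\in L\mid y\le 1\}$ is downward closed, in fact $R\angles{x}=\{y\in L\mid y\le 1\}$, which is generated as a saturated $R$-module by the single element $1$. Hence $R\angles{x}$ is {\sfinite} over $R$, so $x$ is integral over $R$, and therefore (Proposition \ref{367}) quasiintegral over $R$.

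For the reverse inclusion, let $x\in L$ be quasiintegral over $R$. Since $L$ is simple, Proposition \ref{399} applies and yields a nonzero $s\in L_{R\le 1}$ with $s\bar{x}\le s$. Because $L$ is a semifield, so is $L_{R\le 1}$; thus $s$ is invertible, and multiplying through by $s^{-1}$ gives $\bar{x}\le\bar{1}$ in $L_{R\le 1}$. By the description of the order on a contraction (Lemma \ref{423}), this says $x\le r\cdot 1=f(r)$ in $L$ for some $r\in R$, and $f(r)\le 1$, so $x\le 1$. This proves $\mathcal{O}_L=\{x\in L\mid x\le 1\}$.

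Finally, if $x\in\mathcal{O}_L^\times$, then $x$ and $x^{-1}$ both lie in $\mathcal{O}_L$, so $x\le 1$ and $x^{-1}\le 1$; multiplying the second inequality by $x$ gives $1\le x$, whence $x=1$ by antisymmetry of the order, so $\mathcal{O}_L^\times=\{1\}$. There is no serious obstacle here; the one point meriting care is to keep the canonical orders on $K$, $L$, and $L_{R\le 1}$ distinct, and in particular to confirm that $L_{R\le 1}$ is a \emph{nonzero} semifield --- which follows from $0\ne 1$ in $L$ via the description of equality in the contraction (Lemma \ref{317}/Lemma \ref{423}) --- so that the element $s$ produced by Proposition \ref{399} is genuinely invertible.
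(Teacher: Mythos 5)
Your proof is correct, and the core step---applying Proposition \ref{399} to produce a nonzero $s$ with $s\bar{x}\le s$ and then invoking invertibility of $s$---is the same one the paper uses.  Two small comments.  First, the paper's proof only carries out the inclusion $\mathcal{O}_L\subseteq\{x\in L\mid x\le 1\}$ and the computation of $\mathcal{O}_L^\times$, leaving the easy reverse inclusion implicit; you supply it explicitly via $R\angles{x}=\{y\in L\mid y\le 1\}$, which is a sound argument.  Second, you work inside $L_{\mathcal{O}_K\le 1}$ and then use Lemma \ref{423} to pull $\bar{x}\le\bar{1}$ back to an inequality $x\le f(r)\le 1$ in $L$; this is correct but slightly roundabout.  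The paper streamlines this by observing (as it records explicitly in the next proof, Theorem \ref{440}) that since every $r\in\mathcal{O}_K$ satisfies $f(r)\le 1$, the universal property of the contraction is already satisfied by the identity on $L$, so $L_{\mathcal{O}_K\le 1}=L$; then the $s$ from Proposition \ref{399} lives directly in $L$ and $s^{-1}\cdot(sy\le s)$ gives $y\le 1$ with no transfer step.  Your care about $L_{R\le 1}$ being a nonzero semifield is exactly the point that this observation would have made unnecessary, but the verification you give (via Lemma \ref{317}) is valid.
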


\begin{proof}Let $y\in\mathcal{O}_L$.  Then $y$ is quasiintegral over $\mathcal{O}_K$.  Since each element of $\mathcal{O}_K$ is less than or equal to one, we apply Proposition \ref{399} to see that there is some nonzero $s\in L$ with $sy\leq s$.  Multiplying both sides by $s^{-1}$ yields $y\leq 1$.  If furthermore, $y\in\mathcal{O}_L^\times$, we also get $y^{-1}\leq 1$ so $y=1$.
\end{proof}

\begin{thm}\label{440}Let $K$ be a totally ordered idempotent semifield.  Let $L$ be an idempotent semifield equipped with an injective homomorphism $f:K\rightarrow L$.  Then there is a totally ordered idempotent semifield $E$ and a morphism $v:L\rightarrow E$ such that the composite map $v\circ f:K\rightarrow E$ is injective.
\end{thm}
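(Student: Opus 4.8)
The plan is to reduce the statement to Theorem \ref{505} via an appropriate subset of $L\times L$, handling each finite piece by collapsing it to a single element using that $K$ is totally ordered. \textbf{Reformulation.} First I would set $S=\{(f(y),f(x))\mid x,y\in K^\times,\ x<y\}\subseteq L\times L$ and observe that it suffices to prove $S$ is admissible. An admissibility witness is a homomorphism $v\colon L\to\Gamma_{\max}$ with $v(f(x))<v(f(y))$ whenever $x<y$ in $K^\times$; taking $E=\Gamma_{\max}$, the composite $v\circ f$ is injective because two distinct elements of $K$ are comparable, the case of two nonzero elements is exactly the defining inequality of $S$, and the case involving $0$ follows since a homomorphism out of the semifield $L$ sends nothing but $0$ to $0$.

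\textbf{Finite reduction.} By Theorem \ref{505} it is then enough to show every finite $T\subseteq S$ is admissible. Such a $T$ corresponds to finitely many relations $x_i<y_i$ in $K^\times$; writing $t_i=y_ix_i^{-1}>1$ in $K$ and $t=\min_i t_i$ (which still satisfies $t>1$ because $K$ is totally ordered), I would show that any homomorphism $w\colon L\to\Gamma_{\max}$ into a totally ordered idempotent semifield with $w(f(t))>1$ already witnesses admissibility of $T$: from $t\le t_i$ one gets $f(t)\le f(t_i)$, hence $1<w(f(t))\le w(f(t_i))$, and multiplying through by the nonzero element $w(f(x_i))$ recovers $w(f(x_i))<w(f(y_i))$.

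\textbf{Producing the valuation.} It remains to find, for a fixed $t>1$ in $K$, a homomorphism $w\colon L\to\Gamma_{\max}$ into a totally ordered idempotent semifield with $w(f(t))>1$. I would argue by contradiction: if there were no such $w$, then --- such targets being totally ordered --- \emph{every} homomorphism $w\colon L\to\Gamma_{\max}$ into a totally ordered idempotent semifield would satisfy $w(f(t))\le 1$. Since $L$ is a semifield, it is unitgenerated, hence simple, so Corollary \ref{334} applied to $f(t)\in L$ produces a nonzero $s\in L$ with $s\,f(t)\le s$; as $s$ is a unit this forces $f(t)\le 1$ in $L$. But $t>1$ in $K$ gives $f(t)\ge 1$ (apply $f$ to $t+1=t$), while $f$ injective with $f(1)=1$ gives $f(t)\ne 1$; antisymmetry of the partial order on $L$ then yields the contradiction $f(t)=1$.

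The step I expect to be the crux is the finite reduction: the observation that within each finite stage the whole finite family can be replaced by the single element $t=\min_i t_i$ is exactly what makes Corollary \ref{334} directly applicable (one could instead invoke Lemma \ref{519} together with the description of the quasiintegral closure as an intersection of valuation subsemirings, but the single-element reduction is still the essential point). The passage from finite to arbitrary subsets of $S$ is then just the compactness packaged in Theorem \ref{505}, and the remaining verifications --- that semiring homomorphisms preserve $\le$, that semifields are simple, and the bookkeeping in the reformulation step --- are routine.
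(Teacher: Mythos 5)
Your proof is correct and follows essentially the same route as the paper's: reduce to admissibility of a subset of $L\times L$, collapse each finite subcase to a single element of $K$ via the total order, use the quasiintegral-closure machinery to produce the needed valuation, and invoke Theorem~\ref{505}. The only differences are cosmetic: the paper takes $S=\{(1,f(x))\mid x<1\}$ and recovers injectivity on general pairs afterwards from $v(f(xy^{-1}))<1$, whereas you build the general pairs into $S$ from the start and collapse via $t=\min_i y_ix_i^{-1}>1$ rather than via $t=\sum_{u\in U}u<1$; and you appeal directly to Corollary~\ref{334} instead of routing through Lemma~\ref{519} (which, as you note, amounts to the same thing). Both packagings are equally valid.
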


\begin{proof}Let $\mathcal{O}_K=\{x\in K\mid x\leq 1\}$ and let $\mathcal{O}_L$ be its quasiintegral closure in $L$.  Note that $L_{\mathcal{O}_K\leq 1}=L$.  Apply Theorem \ref{432} to see that $\mathcal{O}_L=\bigcap_{w} \{x\in L\mid w(x)\leq 1\}$ where $w$ ranges over all homomorphisms from $L$ into a totally ordered idempotent semifield.  Apply Lemma \ref{519} to see that $\mathcal{O}_L=\{x\in L\mid x\leq 1\}$ and in particular that $\mathcal{O}_L^\times=1$.

Let $U\subseteq \mathcal{O}_K$ be finite and suppose $1\not\in U$.  I claim there is a homomorphism $w_U$ from $L$ into a totally ordered idempotent semifield such that $w_U(f(u))<1$ for all $u\in U$.  Observe that $u<1$ for all $u\in U$.  Let $t=\sum_{u\in U} u$, and note $t<1$.  Suppose for the moment that every homomorphism $w$ from $L$ into a totally ordered idempotent semifield satisfies $w(f(t))=1$.  Then $w(f(t^{-1}))\leq 1$ for all such $w$, so $f(t^{-1})\in \mathcal{O}_L$ and $f(t)\in \mathcal{O}_L$.  Then $f(t)\in\mathcal{O}_L^\times$ so $f(t)=1$.  Since $f$ is injective, $t=1$, contradicting $t<1$.  Hence we may find some homomorphism $w_U:L\rightarrow (\Gamma_U)_\mathrm{max}$ into a totally ordered idempotent semifield such that $w_U(f(t))\neq 1$.  For each $u\in U$, $u\leq 1$ so $w_U(f(u))\leq 1$.  If $w_U(f(u))=1$ for some $u$ then $w_U(f(t))=\sum_{u\in U}w_U(f(u))=1$, a contradiction.  Hence $w_U(f(u))<1$.

Let $S\subseteq L\times L$ be given by $S=\{(1,f(x))\mid x<1\}$.  Let $T\subseteq S$ be finite.  Then $T=\{(1,f(u))\mid u\in U\}$ for some finite set $U\in K$.  Furthermore, each $u\in U$ satisfies $u<1$ so $U\subseteq \mathcal{O}_K$ and $1\not\in U$.  Choose a homomorphism $w_U$ such that $w_U(f(u))<1=w_U(1)$ for all $u\in U$.  Then $w_U(y)<w_U(x)$ for all $(x,y)\in T$.  Hence $T$ is admissible.  By Theorem \ref{505}, $S$ is admissible.  Hence there is some homomorphism $v:L\rightarrow E$ into a totally ordered idempotent semifield $E$ such that $v(f(x))<1$ for all $x<1$.

It remains to show $v\circ f$ is injective.  Let $x,y\in K$ and suppose $v(f(x))=v(f(y))$.  Since $K$ is totally ordered, either $x\leq y$ or $y\leq x$.  Without loss of generality, we assume $x\leq y$.  If $y=0$ then $x=0$ and we are done.  Otherwise, $xy^{-1}\leq 1$.  If $xy^{-1}=1$, we are done, so we may suppose $xy^{-1}<1$.  Then $v(f(xy^{-1}))<1$ and hence $v(f(x))<v(f(y))$, which is a contradiction.  Hence $v\circ f$ is injective.
\end{proof}

\begin{defn}Let $R$ be an idempotent semiring and $A$ be an $R$-algebra.  $A$ is an \emph{extensible} $R$-algebra if every element of $A$ which is quasiintegral over $R$ is integral over $R$.
\end{defn}

Propositions \ref{383} and \ref{307} give some sufficient conditions for an $R$-algebra to be extensible.

\begin{lem}Let $R$ be an idempotent semiring and $A$ be an $R$-algebra.  Let $x\in R$ and suppose $x$ is a unit in $A$.  Suppose furthermore that $A$ is integral over $R$.  Then there exists $y\in R$ with $xy\geq 1$.
\end{lem}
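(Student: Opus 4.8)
The plan is to apply the characterization of integral elements (the proposition immediately following the definition of ``integral'') to the element $x^{-1}$. Since $x$ is a unit in $A$, the element $x^{-1}$ lies in $A$; and since $A$ is integral over $R$, it is integral over $R$. Hence there exist some $n>1$ and $c_0,\dots,c_{n-1}\in R$ with
\[
(x^{-1})^n\leq c_0+c_1x^{-1}+\dots+c_{n-1}(x^{-1})^{n-1}.
\]

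Next I would clear denominators by multiplying through by $x^n$. Multiplication by any fixed element of an idempotent semiring is order-preserving for the canonical order: if $a\leq b$, i.e. $a+b=b$, then $ca+cb=cb$, i.e. $ca\leq cb$. Applying this with the element $x^n\in A$, and using $x^n x^{-n}=1$ together with $x^n x^{-i}=x^{n-i}$, the displayed inequality becomes
\[
1\leq c_0x^n+c_1x^{n-1}+\dots+c_{n-1}x.
\]
Every exponent of $x$ occurring on the right-hand side is at least $1$ (since $n>1$), so the right-hand side equals $x\big(c_0x^{n-1}+c_1x^{n-2}+\dots+c_{n-1}\big)$. Setting $y=c_0x^{n-1}+c_1x^{n-2}+\dots+c_{n-1}$, which lies in $R$ because $x\in R$ and each $c_i\in R$, we obtain $1\leq xy$, which is the desired conclusion.

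There is essentially no obstacle here. The only point to be mildly careful about is that the multiplication used to clear denominators takes place in $A$, where $x$ is invertible, rather than in $R$; one must then check separately that the element $y$ produced lies in (the image of) $R$, which it does because it is a polynomial in $x$ whose coefficients $c_i$ are all pulled back from $R$.
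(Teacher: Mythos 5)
Your proof is correct and matches the paper's argument essentially line for line: apply the integrality characterization to $x^{-1}$, multiply through by $x^n$, and take $y=c_0x^{n-1}+\dots+c_{n-1}\in R$. The extra care you take in justifying that multiplication is order-preserving and that $y$ lies in $R$ is sound but adds nothing beyond what the paper leaves implicit.
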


\begin{proof}Let $u\in A$ be the inverse of $x$.  Then $u$ is integral over $R$ so $u^n\leq c_0+\ldots +c_{n-1}u^{n-1}$ for some $n$ and some $c_0,\ldots,c_{n-1}\in R$.  This yields $1\leq c_0x^n+\ldots+c_{n-1}x$.  Let $y=c_0x^{n-1}+\ldots+c_{n-1}\in R$.  Then $1\leq xy$.
\end{proof}

\begin{prop}Let $L$ be a simple idempotent semiring and let $K\subseteq L$ be a subsemiring.  Suppose $K$ is a totally ordered idempotent semifield.  Suppose furthermore that $L$ is extensible as an algebra over $\mathcal{O}_K=\{t\in K\mid t\leq 1\}$.  Then the induced map $f:K\rightarrow L_{\mathrm{red}}$ is injective.
\end{prop}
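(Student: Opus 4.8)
The plan is to show that $f$ does not identify any two distinct elements of $K$. Since $K$ is totally ordered, it suffices to take $x,y\in K$ with $x\le y$ and $f(x)=f(y)$ and prove $x=y$. As $L$ is simple, $f$ is the composite of the inclusion $K\hookrightarrow L$ with the universal map $L\to L_{\mathrm{red}}$, so Proposition \ref{236} applies to $L$: $f(x)=f(y)$ gives a nonzero $s\in L$ with $sx=sy$. If $x=0$ this says $sy=0$; since simple idempotent semirings have no zero divisors, $y=0$ and we are done. So assume $x\ne 0$, hence $x,y\in K^{\times}$, and put $z=xy^{-1}\in K$, so $z\le 1$; it remains to rule out $z<1$.

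Multiplying $sx=sy$ by $y^{-1}$ yields $sz=s$, and multiplying by $z^{-1}$ (legitimate since $z$ is a unit of $K$, hence of $L$) yields $sz^{-1}=s$. Passing to $L_{\mathcal{O}_K\le 1}$, the image $\bar s$ of $s$ satisfies $\bar s\,\overline{z^{-1}}=\bar s\le\bar s$, and $\bar s\ne 0$ by Lemma \ref{317} (the only way to have $\bar s=0$ would be $s\le 0$, i.e. $s=0$). Since $L$ is simple, Proposition \ref{399} now shows $z^{-1}$ is quasiintegral over $\mathcal{O}_K$. By the hypothesis that $L$ is extensible over $\mathcal{O}_K$, $z^{-1}$ is in fact integral over $\mathcal{O}_K$, so there are $n>1$ and $c_0,\dots,c_{n-1}\in\mathcal{O}_K$ with $(z^{-1})^{n}\le c_0+c_1z^{-1}+\dots+c_{n-1}(z^{-1})^{n-1}$.

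Finally I would multiply this inequality by the unit $z^{n-1}$ to get $z^{-1}\le c_0z^{n-1}+c_1z^{n-2}+\dots+c_{n-1}$ in $L$. Each $c_i\le 1$ in $K$ and $z\le 1$ in $K$, so each summand $c_iz^{n-1-i}$ is $\le 1$ in $L$; and a finite sum of elements $\le 1$ is again $\le 1$ in an idempotent semiring, so $z^{-1}\le 1$ in $L$. But $z<1$ in $K$ forces $z^{-1}>1$ in $K$, and since $K\subseteq L$ is a subsemiring the order on $K$ is the restriction of the order on $L$, so $z^{-1}>1$ in $L$ — contradicting $z^{-1}\le 1$. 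Hence $z=1$, i.e. $x=y$, and $f$ is injective. The step needing care is the passage through Propositions \ref{236} and \ref{399}: recognizing that the nearly trivial identity $sz^{-1}=s$ exhibits $z^{-1}$ as quasiintegral, so that extensibility can be used to upgrade it to a genuine integral dependence relation, is the crux; the remaining manipulations of the order are routine.
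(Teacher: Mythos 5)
Your proof is correct and follows essentially the same route as the paper's: both obtain a nonzero $s$ with $sx = sy$ via Proposition~\ref{236}, recognize $yx^{-1}$ as quasiintegral over $\mathcal{O}_K$ via Proposition~\ref{399}, upgrade to integrality by extensibility, and derive $xy^{-1} = 1$ from the resulting integral dependence. The only cosmetic difference is that you unwind the integral dependence relation by hand (multiplying by $z^{n-1}$ and bounding each summand by $1$), whereas the paper packages that same computation into the unnamed lemma preceding Proposition~\ref{476}, which produces $t \in \mathcal{O}_K$ with $(xy^{-1})t \geq 1$.
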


\begin{proof}Suppose $x,y\in K$ satisfy $f(x)=f(y)$.  If $x=0$ then $y=0$ and vice versa, since $\ker f$ is a saturated ideal in a semifield.  Otherwise $x$ and $y$ are units.  Since $K$ is totally ordered, either $x\leq y$ or $y\leq x$.  We suppose without loss of generality that $x\leq y$ so $xy^{-1}\in \mathcal{O}_K$.  Since $f(x)=f(y)$, there is some $s\in L$ with $sx=sy$, which implies $s(xy^{-1})\leq s$ and $s(yx^{-1})\leq s$.   Let $\mathcal{O}_L$ be the quasiintegral closure of $\mathcal{O}_K$.  $\mathcal{O}_L$ is the set of $t\in L$ satisfying $st\leq s$ for some $s\in L$, which includes both $xy^{-1}$ and $yx^{-1}$.  Hence $xy^{-1}\in\mathcal{O}_L^\times\cap \mathcal{O}_K$.  Because $L$ is extensible as an $\mathcal{O}_K$-algebra, $\mathcal{O}_L$ is integral over $\mathcal{O}_K$.  Hence there is some $t\in\mathcal{O}_K$ such that $(xy^{-1})t\geq 1$.  Since $1$ is maximal in $\mathcal{O}_K$, $t\leq 1$ so $xy^{-1}\geq xy^{-1}t\geq 1$.  This implies $xy^{-1}=1$ so $x=y$.  Hence $f$ is injective.
\end{proof}

\begin{cor}\label{471}Let $L$ be a simple idempotent semiring and let $K\subseteq L$ be a subsemiring.  Suppose $K$ is a totally ordered idempotent semifield.  Suppose furthermore that $L$ is extensible as an algebra over $\mathcal{O}_K=\{t\in K\mid t\leq 1\}$.  Then the induced map $g:K\rightarrow L_{(0)}$ is injective.
\end{cor}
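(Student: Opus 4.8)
The plan is to reduce this corollary to the proposition immediately preceding it, which says the induced map $f\colon K\to L_{\mathrm{red}}$ is injective, using Proposition~\ref{236} as the bridge between $L_{\mathrm{red}}$ and the localization $L_{(0)}$.

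First I would note that, since $L$ is simple, it has no zero divisors (as already used in the proof of Theorem~\ref{432}), so the nonzero elements of $L$ form a multiplicative set and $L_{(0)}$ is a legitimate localization. Then, for $x,y\in K$, unwinding the definition of localization shows that $g(x)=g(y)$ holds in $L_{(0)}$ if and only if there is a nonzero $s\in L$ with $sx=sy$. By Proposition~\ref{236} applied to the simple idempotent semiring $L$, the existence of such an $s$ is equivalent to $x$ and $y$ having equal images under the universal map $L\to L_{\mathrm{red}}$. Because $K\subseteq L$, that universal map restricts on $K$ to precisely the map $f$ of the preceding proposition, so $g(x)=g(y)$ forces $f(x)=f(y)$, hence $x=y$ by that proposition. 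Thus $g$ is injective.

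I do not anticipate a real obstacle here; the argument is essentially bookkeeping. The only point requiring attention is identifying the map $f$ of the preceding proposition with the restriction to $K$ of the universal map $L\to L_{\mathrm{red}}$, so that Proposition~\ref{236}, which is stated for the ambient semiring, applies directly. As an alternative one could factor $g$ as $K\xrightarrow{f}L_{\mathrm{red}}\to L_{(0)}$---the second arrow exists because $L_{(0)}$ is cancellative and hence $L\to L_{(0)}$ factors through $L_{\mathrm{red}}$---and observe that $L_{\mathrm{red}}\to L_{(0)}$ is injective since $L_{\mathrm{red}}$ is cancellative and localizing a cancellative multiplicative monoid is injective; combined with injectivity of $f$ this again yields the claim.
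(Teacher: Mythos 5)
Your proposal is correct and takes essentially the same route as the paper: assume $g(x)=g(y)$, unwind the localization to obtain a nonzero $s\in L$ with $sx=sy$, apply Proposition~\ref{236} to conclude $f(x)=f(y)$, and finish by injectivity of $f$ from the preceding proposition. The extra remarks about $L_{(0)}$ being a legitimate localization and the alternative factorization through $L_{\mathrm{red}}$ are fine but not needed beyond what the paper records.
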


\begin{proof}Let $f:K\rightarrow L_{\mathrm{red}}$ be the map induced by the inclusion $K\subseteq L$.  Suppose $g(x)=g(y)$.  Then $sx=sy$ for some nonzero $s$.  Then Proposition \ref{236} implies $f(x)=f(y)$.  Since $f$ is injective, $x=y$ so $g$ is injective.
\end{proof}

\begin{prop}\label{476}Let $L$ be a simple idempotent semiring and let $K\subseteq L$ be a subsemiring.  Suppose $K$ is a totally ordered idempotent semifield.  Suppose furthermore that $L$ is extensible as an algebra over $\mathcal{O}_K=\{t\in K\mid t\leq 1\}$.  Let $i:K\rightarrow L$ be the inclusion.  Then there is a homomorphism $v:L\rightarrow E$ into a totally ordered idempotent semifield $E$ such that $v\circ i$ is injective.
\end{prop}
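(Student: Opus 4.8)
The plan is to reduce to the semifield situation already settled in Theorem~\ref{440} by localizing $L$ at its nonzero elements. First I would record that a simple idempotent semiring has no zero divisors: if $xy=0$ with $y\neq 0$, then Proposition~\ref{87} gives $z$ with $1\leq xz$, hence $y\leq y(xz)=(yx)z=0$, which forces $y=0$. Therefore $S=L\setminus\{0\}$ is multiplicatively closed, no nonzero element of $L$ maps to $0$ in $L_{(0)}=S^{-1}L$ (since $x/1=0/1$ would require $ux=0$ for some $u\in S$), and every element of $S$ becomes invertible there; so $L_{(0)}$ is an idempotent semifield. Write $\ell:L\rightarrow L_{(0)}$ for the localization map.

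By Corollary~\ref{471}, under the hypotheses of the proposition the induced map $g=\ell\circ i:K\rightarrow L_{(0)}$ is injective. This places us exactly in the setting of Theorem~\ref{440}: $K$ is a totally ordered idempotent semifield, $L_{(0)}$ is an idempotent semifield, and $g$ is an injective homomorphism. Applying Theorem~\ref{440} yields a totally ordered idempotent semifield $E$ and a homomorphism $w:L_{(0)}\rightarrow E$ such that $w\circ g$ is injective. Setting $v=w\circ\ell:L\rightarrow E$, we get $v\circ i=w\circ\ell\circ i=w\circ g$, which is injective, so $v$ and $E$ are as required.

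There is no serious obstacle: the substance is entirely carried by Theorem~\ref{440} and Corollary~\ref{471}. The only point needing care is the verification that $L_{(0)}$ is genuinely a semifield rather than a collapsed quotient, which is precisely the no-zero-divisors observation above; note also that one must not expect $L_{(0)}$ to be totally ordered, but Theorem~\ref{440} only requires the larger semifield to be a semifield, so this is harmless.
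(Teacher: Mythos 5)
Your proof is correct and follows the same route as the paper's: localize $L$ at its nonzero elements, use Corollary~\ref{471} to see that $K\rightarrow L_{(0)}$ is injective, apply Theorem~\ref{440}, and compose. The only difference is that you spell out the (correct and worthwhile) verification that simplicity rules out zero divisors so that $L_{(0)}$ is indeed a semifield, which the paper leaves implicit.
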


\begin{proof}Let $u:L\rightarrow L_{(0)}$ be the canonical map.  Corollary \ref{471} implies $u\circ i$ is injective.  Applying Theorem \ref{440} gives a homomorphism $w:L_{(0)}\rightarrow E$ into a totally ordered idempotent semifield such that $w\circ(u\circ i)$ is injective.  Take $v=w\circ u$.
\end{proof}

We would like to relax the hypothesis that $K$ is a totally ordered idempotent semifield.  To do this we study the $\mathcal{O}_K$-contraction.

\begin{lem}\label{484b}Let $K$ be a unitgenerated idempotent semiring.  Fix a surjective homomorphism $v:K\rightarrow\Gamma_{\mathrm{max}}$ into a totally ordered idempotent semifield.  Let $\mathcal{O}_K=\{x\in K\mid v(x)\leq 1\}$.  Then $v$ induces an isomorphism $K_{\mathcal{O}_K\leq 1}\cong \Gamma_{\mathrm{max}}$.
\end{lem}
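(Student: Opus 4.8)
The plan is to factor $v$ through the contraction and then exhibit an explicit inverse. Since $v(r)\le 1$ for every $r\in\mathcal{O}_K$, we have $v(rx)=v(r)v(x)\le v(x)$ for all $r\in\mathcal{O}_K$ and $x\in K$, so by the universal property of the $\mathcal{O}_K$-contraction (Definition \ref{411}) the map $v$ factors uniquely as $v=\bar v\circ\phi$, where $\phi\colon K\to K_{\mathcal{O}_K\le 1}$ is the canonical map and $\bar v\colon K_{\mathcal{O}_K\le 1}\to\Gamma_{\mathrm{max}}$ is a homomorphism; since $v$ is surjective so is $\bar v$, and the whole point is to invert it. I will use two elementary facts. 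First, $v$ restricts to a surjection $K^\times\to\Gamma$: given $\gamma\in\Gamma$, choose $x\in K$ with $v(x)=\gamma$ and write $x=\sum_{i\in I}u_i$ with each $u_i\in K^\times$ (possible as $K$ is unitgenerated, with $I\neq\emptyset$ since $v(x)\neq 0$); then $\gamma=v(x)=\max_{i\in I}v(u_i)=v(u_k)$ for some $k$, and $u_k$ is the desired unit. Second, $\phi$ is surjective, since by the construction in the proof of Lemma \ref{317} the contraction is realized as a quotient of $K$ with $\phi$ the quotient map.

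I would then define $\psi\colon\Gamma_{\mathrm{max}}\to K_{\mathcal{O}_K\le 1}$ by $\psi(0)=0$ and, for $\gamma\in\Gamma$, $\psi(\gamma)=\phi(u)$ where $u\in K^\times$ is any unit with $v(u)=\gamma$. The one observation underlying everything is that for units $u,w$ with $v(u)\le v(w)$ one has $uw^{-1}\in\mathcal{O}_K$, hence $\phi(u)=\phi\bigl((uw^{-1})w\bigr)\le\phi(w)$. In particular $v(u)=v(w)$ forces $\phi(u)=\phi(w)$, so $\psi$ is well defined; and the same observation turns the homomorphism axioms for $\psi$ into routine bookkeeping — multiplicativity by choosing unit representatives multiplicatively, and additivity because $\gamma\le\gamma'$ implies $\psi(\gamma)\le\psi(\gamma')$ (the element $0$ being handled trivially).

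Finally I would verify that $\psi$ is a two-sided inverse of $\bar v$. One direction is immediate: $\bar v(\psi(\gamma))=\bar v(\phi(u))=v(u)=\gamma$, so $\bar v\circ\psi=\mathrm{id}$. For the other, since $\phi$ is surjective it is enough to show $\psi\circ\bar v\circ\phi=\phi$; writing $x=\sum_{i\in I}u_i$ with $u_i\in K^\times$ (the empty sum covering $x=0$), we get $\phi(x)=\sum_{i\in I}\phi(u_i)=\sum_{i\in I}\psi(v(u_i))=\psi\bigl(\sum_{i\in I}v(u_i)\bigr)=\psi(v(x))=\psi(\bar v(\phi(x)))$, using additivity of $\phi$, of $\psi$, and of $v$ in turn. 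Hence $\bar v$ is an isomorphism with inverse $\psi$. I expect the only friction to be the middle step — verifying well-definedness and the homomorphism axioms for $\psi$ — but all of it reduces to the single order-compatibility observation above, which is essentially the same property of $\phi$ already used to construct $\bar v$.
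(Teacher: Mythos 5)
Your proof is correct, and it takes a structurally different route from the paper's. The paper factors $v=f\circ\phi$, observes $f$ is surjective, and then proves $f$ is \emph{injective} directly: given $f(\bar x)=f(\bar y)$ with $x,y\neq 0$, it decomposes $x$ into units, picks a unit $x_k$ with $v(x_k)=v(x)$, sets $r=x_k^{-1}y\in\mathcal{O}_K$ to get $y\le rx$, produces $s$ symmetrically with $x\le sy$, and concludes $\bar x=\bar y$ by invoking the explicit criterion of Lemma~\ref{317}. You instead build an explicit inverse $\psi:\Gamma_{\mathrm{max}}\to K_{\mathcal{O}_K\le 1}$ by sending $\gamma$ to $\phi(u)$ for any unit $u$ with $v(u)=\gamma$, and verify it is a two-sided inverse homomorphism. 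Both arguments turn on the same two facts --- unitgeneratedness lets you lift elements of $\Gamma$ to units of $K$, and the universal property forces $\phi(u)\le\phi(w)$ whenever $v(u)\le v(w)$ for units $u,w$ --- but your version bypasses the criterion of Lemma~\ref{317} (you use it only for surjectivity of $\phi$, which is milder) at the cost of verifying the homomorphism axioms for $\psi$. The paper's route is shorter because it can defer to Lemma~\ref{317}; yours is more self-contained and yields the inverse explicitly, which can be handy downstream. The only minor smoothing I'd suggest is to flag explicitly that well-definedness of $\psi$ and the inequality $\psi(\gamma)\le\psi(\gamma')$ for $\gamma\le\gamma'$ are two uses of the same order-compatibility observation, and to separate the zero cases cleanly in the additivity/multiplicativity checks; you gesture at this but a reader might want it spelled out.
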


\begin{proof}Since each element of $\mathcal{O}_K$ satisfies $v(x)\leq 1$, $v$ induces a homomorphism $f:K_{\mathcal{O}_K\leq 1}\rightarrow\Gamma_{\mathrm{max}}$.  $f$ is surjective since $v$ is surjective.  Suppose $f(\bar{x})=f(\bar{y})$ and $x,y\neq 0$.  Then $v(x)=v(y)$.  Write $x$ as a finite sum of units $x=\sum_{i\in I} x_i$.  There is some $k\in I$ such that $v(x)=v(x_k)$.  Then $v(x_k^{-1}y)=1$ so $x_k^{-1}y\in \mathcal{O}_K$.  Let $r=x_k^{-1}y$.  Then $y=rx_k\leq rx$.  A similar argument yields an element $s\in \mathcal{O}_K$ such that $x\leq sy$.  Hence $\bar{x}=\bar{y}$ by Lemma \ref{317}.
\end{proof}

\begin{lem}\label{490b}Let $R$ be an idempotent semiring.  Let $A$ and $B$ be $R$-algebras.  Suppose $A$ is a $R$-subalgebra of $B$ and let $i$ be the inclusion.  Then the map $A_{R\leq 1}\rightarrow B_{R\leq 1}$ induced by $i$ is injective.
\end{lem}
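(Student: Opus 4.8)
The plan is to reduce to the explicit description of the contraction relation in Lemma~\ref{317}. First, recall how the map is induced: composing $i$ with the universal map $\phi_B\colon B\to B_{R\le 1}$ gives $\phi_B\circ i\colon A\to B_{R\le 1}$, and since $i$ is $R$-linear we have $(\phi_B\circ i)(rx)=\phi_B(r\,i(x))\le\phi_B(i(x))=(\phi_B\circ i)(x)$ for all $r\in R$ and $x\in A$. By the universal property in Definition~\ref{411} this factors uniquely as $j\circ\phi_A$ for a homomorphism $j\colon A_{R\le 1}\to B_{R\le 1}$, with $j(\phi_A(a))=\phi_B(i(a))$ for all $a\in A$; identifying $i(a)$ with $a$, this reads $j(\bar a)=\bar a$ in the notation of Lemma~\ref{317}. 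Moreover $\phi_A$ is surjective: its image is a sub-$R$-module of $A_{R\le 1}$ through which $\phi_A$ itself factors, and uniqueness in the universal property forces this image to be all of $A_{R\le 1}$. Hence every element of $A_{R\le 1}$ is of the form $\bar a$ with $a\in A$, and it suffices to show that $j(\bar a)=j(\bar{a'})$ implies $\bar a=\bar{a'}$.

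The key observation is that, because $A$ is an $R$-subalgebra of $B$, the addition and the $R$-action on $A$ are the restrictions of those on $B$. In particular, for $a,a'\in A$ and $r\in R$ the element $ra'$ lies in $A$ and is the same whether computed in $A$ or in $B$, and $a\le a'$ holds in $A$ if and only if it holds in $B$ (both assert $a+a'=a'$, and the two additions agree).

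Now suppose $j(\bar a)=j(\bar{a'})$, i.e. $\bar a=\bar{a'}$ in $B_{R\le 1}$. By Lemma~\ref{317} applied to the $R$-module $B$, there exist $r,s\in R$ with $a\le ra'$ and $sa\le a'$, these inequalities being taken in $B$. Since $ra'$ and $sa$ lie in $A$, the previous paragraph shows the same two inequalities already hold in $A$, so Lemma~\ref{317} applied to the $R$-module $A$ yields $\bar a=\bar{a'}$ in $A_{R\le 1}$. Therefore $j$ is injective.

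I do not expect a genuine obstacle. The one point worth flagging is the observation above: the inequalities witnessing $\bar a=\bar{a'}$ a priori hold only in the larger semiring $B$, but they descend to $A$ precisely because $A$, being an $R$-subalgebra, is closed under the semiring addition and the scalar multiplication of $B$.
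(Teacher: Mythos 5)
Your proof is correct and follows essentially the same route as the paper's: both apply Lemma~\ref{317} in $B$ to extract witnesses $r,s\in R$, observe that the resulting inequalities involve only elements of $A$ and so hold there, and then apply Lemma~\ref{317} again in $A$. The extra scaffolding you supply (the construction of $j$ from the universal property, the surjectivity of $\phi_A$, and the explicit remark that the partial order on $A$ is the restriction of that on $B$) just makes explicit what the paper's phrase ``since $i$ is injective'' compresses.
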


\begin{proof}Let $x,y\in A$.  Suppose $\overline{i(x)}=\overline{i(y)}$ as elements of $B_{R\leq 1}$.  Then there exists $r,s\in R$ with $i(x)\leq ri(y)$ and $i(y)\leq si(x)$.  Since $i$ is injective, $x\leq ry$ and $y\leq sx$.  Hence $\bar{x}=\bar{y}$ as elements of $A_{R\leq 1}$.
\end{proof}

\begin{lem}\label{497}Let $R$ be an idempotent semiring.  Let $A$ be an extensible $R$-algebra which is simple.  Then $A_{R\leq 1}$ is an extensible $R_{R\leq 1}$-algebra.
\end{lem}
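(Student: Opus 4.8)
The plan is to move quasiintegral elements between $A$ and $A_{R\leq 1}$ by means of Proposition \ref{399}, and then to invoke extensibility of $A$ over $R$ together with the characterisation of integral elements by polynomial inequalities $x^n\leq c_0+\dots+c_{n-1}x^{n-1}$. Two preliminary observations are needed. First, $A_{R\leq 1}$ is again simple: the canonical map $\phi\colon A\to A_{R\leq 1}$ is surjective (by Lemma \ref{317} it identifies $A_{R\leq 1}$ with a quotient of $A$), and the preimage under $\phi$ of a saturated ideal of $A_{R\leq 1}$ is a saturated ideal of $A$, which by simplicity of $A$ must be $0$ or $A$; surjectivity of $\phi$ then forces the original ideal to be $0$ or $A_{R\leq 1}$. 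Second, I would check that the $R_{R\leq 1}$-contraction of $A_{R\leq 1}$ is trivial, i.e. the universal map $A_{R\leq 1}\to(A_{R\leq 1})_{R_{R\leq 1}\leq 1}$ is an isomorphism; this holds because for $\bar r\in R_{R\leq 1}$ and $\bar x\in A_{R\leq 1}$ one already has $\bar r\bar x\leq\bar x$ --- lifting $\bar r$ to $r\in R$ (using surjectivity of $R\to R_{R\leq 1}$) and $\bar x$ to $x\in A$, the defining inequality $\phi(rx)\leq\phi(x)$ of Definition \ref{411} gives exactly $\bar r\bar x\leq\bar x$ --- so the identity map of $A_{R\leq 1}$ satisfies the universal property defining the contraction.

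Granting these, let $\bar a\in A_{R\leq 1}$ be quasiintegral over $R_{R\leq 1}$ and fix a lift $a\in A$ with $\phi(a)=\bar a$. Applying Proposition \ref{399} to the simple $R_{R\leq 1}$-algebra $A_{R\leq 1}$ --- and using the second observation to replace $(A_{R\leq 1})_{R_{R\leq 1}\leq 1}$ by $A_{R\leq 1}$ --- I obtain a nonzero $s\in A_{R\leq 1}$ with $s\bar a\leq s$. Now I apply Proposition \ref{399} in the other direction, to the simple $R$-algebra $A$: since $\bar a$ is the image of $a$ in $A_{R\leq 1}$ and $s$ is a nonzero element with $s\bar a\leq s$, the element $a$ is quasiintegral over $R$. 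Extensibility of $A$ over $R$ then upgrades this to the statement that $a$ is integral over $R$, so there exist $n>1$ and $c_0,\dots,c_{n-1}\in R$ with $a^n\leq c_0+c_1a+\dots+c_{n-1}a^{n-1}$ in $A$.

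Finally I apply the semiring homomorphism $\phi$ to this inequality. Since $\phi$ preserves the order, and since the image of each $c_i$ under $R\to A\to A_{R\leq 1}$ is the scalar action on $A_{R\leq 1}$ of the class $\bar c_i\in R_{R\leq 1}$, this yields $\bar a^n\leq \bar c_0+\bar c_1\bar a+\dots+\bar c_{n-1}\bar a^{n-1}$ with all $\bar c_i\in R_{R\leq 1}$. By the polynomial characterisation of integrality, applied now to the $R_{R\leq 1}$-algebra $A_{R\leq 1}$, this says precisely that $\bar a$ is integral over $R_{R\leq 1}$. As $\bar a$ was an arbitrary quasiintegral element, $A_{R\leq 1}$ is extensible over $R_{R\leq 1}$.

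I expect the delicate point to be the second preliminary observation: one must be confident that quasiintegrality over $R_{R\leq 1}$, as detected by Proposition \ref{399}, is tested inside $(A_{R\leq 1})_{R_{R\leq 1}\leq 1}$, and that this iterated contraction collapses back to $A_{R\leq 1}$, since otherwise the transfer of the witness $s$ to the algebra $A$ in the middle paragraph breaks down. The remaining steps are routine bookkeeping with $\phi$ and the two cited propositions.
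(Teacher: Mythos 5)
Your argument is correct, and the core of it coincides with the paper's: produce a nonzero $s\in A_{R\leq 1}$ with $s\bar a\leq s$, feed this into Proposition \ref{399} for the simple $R$-algebra $A$ to conclude $a$ is quasiintegral over $R$, upgrade to integrality by extensibility, and push the polynomial inequality down through $\phi$. Where you differ is in how the witness $s$ is produced. The paper reads it off directly: quasiintegrality gives a finite saturated $R_{R\leq 1}\langle\bar x\rangle$-submodule $M\subseteq A_{R\leq 1}$, and Lemma \ref{423} writes $M=\{\bar a\mid\bar a\leq\bar s\}$, whence $\bar s\bar x\leq\bar s$. You instead invoke Proposition \ref{399} a second time, now for the $R_{R\leq 1}$-algebra $A_{R\leq 1}$, which buys uniformity (everything is routed through one proposition) at the cost of having to verify two extra facts: that $A_{R\leq 1}$ is simple, and that the iterated contraction $(A_{R\leq 1})_{R_{R\leq 1}\leq 1}$ collapses back to $A_{R\leq 1}$. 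Both verifications are correct. It is worth noting that the second of these is not a quirk of your route: the paper's appeal to Lemma \ref{423} would a priori locate $\bar s$ in $(A_{R\leq 1})_{R_{R\leq 1}\leq 1}$ rather than in $A_{R\leq 1}$, so the triviality of the iterated contraction (which follows exactly as you argue, since $\bar r\bar x=\phi(rx)\leq\phi(x)=\bar x$ already holds in $A_{R\leq 1}$) is silently being used there too. In short, your proof is sound, slightly less direct in obtaining $s$, and more scrupulous than the paper about a point the paper leaves implicit.
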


\begin{proof}Let $\bar{x}\in A_{R\leq 1}$ be the class of an element $x\in A$.  Suppose $\bar{x}$ is quasiintegral over $R_{R\leq 1}$.  Then there is a {\sfinite} saturated submodule $M\subseteq A_{R\leq 1}$ such that $xM\subseteq M$.  Choose $\bar{s}\in M$ so $M=\{\bar{a}\in A_{R\leq 1}\mid \bar{a}\leq \bar{s}\}$.  Then $\bar{s}\bar{x}\leq \bar{s}$.  Apply Proposition \ref{399} to conclude $x$ is quasiintegral over $R$.  Since $A$ is an extensible $R$-algebra, $x$ is integral over $R$.  We have some $n>1$ and $c_0,\ldots,c_{n-1}\in R$ such that $x^n\leq c_0+\ldots+c_{n-1}x^{n-1}$.  Then $\bar{x}^n\leq\bar{c}_0+\ldots+\bar{c}_{n-1}\bar{x}^{n-1}$, so that $\bar{x}$ is integral over $R_{R\leq 1}$.
\end{proof}

\begin{lem}\label{503}Let $K$ be a unitgenerated idempotent semiring equipped with a surjective homomorphism $v:K\rightarrow\Gamma_\mathrm{max}$ into a totally ordered idempotent semifield.  Let $\mathcal{O}_K=\{x\in K\mid v(x)\leq 1\}$.  Then the inclusion of $\mathcal{O}_K$ into $K$ yields an isomorphism $(\mathcal{O}_K)_{\mathcal{O}_K\leq 1}\cong \{\bar{x}\in K_{\mathcal{O}_K\leq 1}\mid \bar{x}\leq 1\}$.
\end{lem}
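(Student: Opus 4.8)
The plan is to exhibit an explicit isomorphism by comparing both sides to $\Gamma_{\max}$ through the valuation $v$. By Lemma \ref{484b}, $v$ induces an isomorphism $K_{\mathcal{O}_K\leq 1}\cong \Gamma_{\max}$, and under this identification the subset $\{\bar{x}\in K_{\mathcal{O}_K\leq 1}\mid \bar{x}\leq 1\}$ corresponds to $\mathcal{O}_{\Gamma}:=\{\gamma\in\Gamma_{\max}\mid\gamma\leq 1\}$. So it suffices to produce an isomorphism $(\mathcal{O}_K)_{\mathcal{O}_K\leq 1}\cong\mathcal{O}_\Gamma$ compatible with this. The restriction $v|_{\mathcal{O}_K}:\mathcal{O}_K\to\mathcal{O}_\Gamma$ sends each $r\in\mathcal{O}_K$ to $v(r)\leq 1$, so it factors through a homomorphism $g:(\mathcal{O}_K)_{\mathcal{O}_K\leq 1}\to\mathcal{O}_\Gamma$, and $g$ is surjective because $v$ is surjective onto $\Gamma$ (every element of $\Gamma_{\max}$ of value $\leq 1$ is hit by some element of $K$ of value $\leq 1$, i.e. by an element of $\mathcal{O}_K$, using that $K$ is unitgenerated so we can scale a preimage).

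First I would establish injectivity of $g$ directly via Lemma \ref{317}: if $r,r'\in\mathcal{O}_K$ with $r,r'\neq 0$ satisfy $v(r)=v(r')$, I want $c,d\in\mathcal{O}_K$ with $r\leq c r'$ and $r'\leq d r$. Writing $r=\sum_{i\in I}r_i$ as a finite sum of units of $K$, there is some index $k$ with $v(r)=v(r_k)$; then $v(r_k^{-1}r')=1$, so $r_k^{-1}r'\in\mathcal{O}_K$, and I would like to set things up so that the needed scalars land in $\mathcal{O}_K$ rather than merely in $K$. The subtlety is that in Lemma \ref{484b} the scalars are allowed to lie in $\mathcal{O}_K$ already, so this should go through: from $v(r)=v(r')$ and the sum decompositions of both $r$ and $r'$, pick $r_k$ and $r'_\ell$ of maximal value; then $r_k^{-1}r'_\ell$ and its inverse both have value $1$, hence lie in $\mathcal{O}_K^\times\subseteq\mathcal{O}_K$, and one checks $r'\leq (r_k^{-1}r'_\ell)^{\pm 1}\cdot(\text{sum of unit ratios})\cdot r$ with all ratios of value $\leq 1$. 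This gives $r\sim r'$ in the sense of Lemma \ref{317}, hence $g$ is injective.

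Finally I would assemble the square: the inclusion $\mathcal{O}_K\hookrightarrow K$ induces a map $(\mathcal{O}_K)_{\mathcal{O}_K\leq 1}\to K_{\mathcal{O}_K\leq 1}$, which by Lemma \ref{490b} is injective, and its image lands in $\{\bar{x}\mid\bar{x}\leq 1\}$ since each $r\in\mathcal{O}_K$ has $v(r)\leq 1$ hence $\bar r\leq\bar 1=1$. Under the isomorphism $K_{\mathcal{O}_K\leq 1}\cong\Gamma_{\max}$ of Lemma \ref{484b} this injective map is exactly $g$ followed by the inclusion $\mathcal{O}_\Gamma\hookrightarrow\Gamma_{\max}$, so its image is all of $\mathcal{O}_\Gamma=\{\bar x\mid\bar x\leq 1\}$; therefore the induced map $(\mathcal{O}_K)_{\mathcal{O}_K\leq 1}\to\{\bar{x}\in K_{\mathcal{O}_K\leq 1}\mid\bar{x}\leq 1\}$ is a bijective homomorphism, and since it is order-compatible in both directions (order on a contraction is detected by the relation of Lemma \ref{423}, which transports along the isomorphism) it is an isomorphism of idempotent semirings. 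The main obstacle is the injectivity step: making sure the comparison scalars produced from the unit decompositions genuinely lie in $\mathcal{O}_K$ and not just in $K$, which is what forces the unitgenerated hypothesis to be used carefully.
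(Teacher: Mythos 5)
Your proof is correct, but it takes a noticeably longer route than the paper's, and it contains a redundancy. The paper does \emph{not} pass through $\Gamma_{\max}$ or invoke Lemma~\ref{484b} at all. Its argument is: $j:(\mathcal{O}_K)_{\mathcal{O}_K\leq 1}\to K_{\mathcal{O}_K\leq 1}$ is injective by Lemma~\ref{490b}; for the image, if $\bar{x}$ comes from $\mathcal{O}_K$ then $\bar{x}=\overline{x\cdot 1}\leq\bar{1}=1$ because $x$ is a scalar; conversely if $\bar{y}\leq 1$ then by Lemma~\ref{423} (equivalently Lemma~\ref{317}) there is $r\in\mathcal{O}_K$ with $y\leq r$, and since $\mathcal{O}_K=\{z:v(z)\leq 1\}$ is saturated in $K$ this forces $y\in\mathcal{O}_K$. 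That's the whole proof. Your detour through the isomorphism of Lemma~\ref{484b} and the subsemiring $\mathcal{O}_\Gamma\subseteq\Gamma_{\max}$ works, but buys nothing: it establishes the same containments by a longer path.

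Two smaller points worth internalizing. First, the ``main obstacle'' you flag --- whether the comparison scalars in the injectivity argument for $g$ lie in $\mathcal{O}_K$ --- is not actually an obstacle. In the argument of Lemma~\ref{484b}, if $v(r)=v(r')$ and $r=\sum_i r_i$ with $r_k$ of maximal value, the scalar is $c=r_k^{-1}r'$, which has $v(c)=v(r_k)^{-1}v(r')=1$ and hence lies in $\mathcal{O}_K$ automatically; there is no need for the more elaborate two-index juggling with $r_k$ and $r'_\ell$. Second, your direct injectivity argument for $g$ is redundant: you later invoke Lemma~\ref{490b} to get injectivity of $j$, and since $j$ factors (via the isomorphism of Lemma~\ref{484b}) as $g$ followed by the inclusion $\mathcal{O}_\Gamma\hookrightarrow\Gamma_{\max}$, injectivity of $j$ already gives injectivity of $g$ for free. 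You only really need the surjectivity of $g$ plus Lemma~\ref{490b}, which, if you had pared your argument down to that, would be close to the paper's in length; but the paper avoids the $\Gamma_{\max}$ comparison entirely and instead uses the saturation of $\mathcal{O}_K$ in $K$, which is cleaner.
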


\begin{proof}Since the inclusion of $\mathcal{O}_K$ into $K$ is injective, the same is true for the induced map $j:(\mathcal{O}_K)_{\mathcal{O}_K\leq 1}\rightarrow K_{\mathcal{O}_K\leq 1}$.  It remains to determine the image of this map.  Let $\bar{x}\in (\mathcal{O}_K)_{\mathcal{O}_K\leq 1}$.  Then $\bar{x}\leq 1$ so $j(\bar{x})\leq 1$.  Conversely, let $\bar{y}\in K_{\mathcal{O}_K\leq 1}$ and suppose $\bar{y}\leq 1$.  Then there exists $r\in\mathcal{O}_K$ such that $y\leq r$.  Hence $y\in \mathcal{O}_K$.  Hence $\bar{y}$ is in the image of $j$.
\end{proof}

\begin{thm}\label{518}Let $K$ be a unitgenerated idempotent semiring.  Let $v:K\rightarrow E$ be a surjective homomorphism into some totally ordered idempotent semifield.  Let $L$ be a unitgenerated idempotent semiring containing $K$, and let $i:K\rightarrow L$ be the inclusion.  Let $\mathcal{O}_K=\{x\in K\mid v(x)\leq 1\}$.  Suppose $L$ is extensible as an $\mathcal{O}_K$-algebra.  Then there is a totally ordered idempotent semifield $F$, an injective homomorphism $j:E\rightarrow F$, and a homomorphism $w:L\rightarrow F$ such that $wi=jv$.
\end{thm}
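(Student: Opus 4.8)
The plan is to reduce everything to Proposition \ref{476} by passing to $\mathcal{O}_K$-contractions, exploiting the chain of lemmas \ref{484b}, \ref{490b}, \ref{497} and \ref{503}. Since $E$ is a totally ordered idempotent semifield we may write $E=\Gamma_{\max}$ with $\Gamma=E^\times$, so Lemma \ref{484b} applies and identifies $E$ with the contraction $K_{\mathcal{O}_K\leq 1}$ in such a way that the canonical map $p\colon K\to K_{\mathcal{O}_K\leq 1}$ becomes $v$. Write $q\colon L\to L_{\mathcal{O}_K\leq 1}$ for the canonical map of the target. Because $\mathcal{O}_K\subseteq K\subseteq L$, the semiring $K$ is an $\mathcal{O}_K$-subalgebra of $L$, so functoriality of the contraction gives a commuting square involving $i\colon K\to L$, the map $p$, the canonical map $q$, and the induced map $\iota\colon K_{\mathcal{O}_K\leq 1}\to L_{\mathcal{O}_K\leq 1}$ (so $q\circ i=\iota\circ p$); by Lemma \ref{490b} the map $\iota$ is injective, and via the identification $E\cong K_{\mathcal{O}_K\leq 1}$ I will regard $E$ as a subsemiring of $L_{\mathcal{O}_K\leq 1}$.

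Next I would verify the hypotheses of Proposition \ref{476} for the inclusion $E\subseteq L_{\mathcal{O}_K\leq 1}$. The semiring $L$ is unitgenerated, hence simple, and $L_{\mathcal{O}_K\leq 1}$ is a surjective quotient of $L$ by Lemma \ref{317}. A surjective image of a simple semiring is again simple: a saturated ideal of the quotient pulls back to a saturated ideal of $L$, which must be $0$ or all of $L$, and surjectivity then forces the original ideal to be $0$ or everything; so $L_{\mathcal{O}_K\leq 1}$ is simple. The semiring $E$ is a totally ordered idempotent semifield by hypothesis. Finally, $\mathcal{O}_E=\{t\in E\mid t\leq 1\}$ coincides, under the identifications above together with Lemma \ref{503}, with the image of $(\mathcal{O}_K)_{\mathcal{O}_K\leq 1}$; and since $L$ is a simple, extensible $\mathcal{O}_K$-algebra, Lemma \ref{497} shows $L_{\mathcal{O}_K\leq 1}$ is extensible as an $(\mathcal{O}_K)_{\mathcal{O}_K\leq 1}$-algebra, i.e. as an $\mathcal{O}_E$-algebra.

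With these hypotheses in place, Proposition \ref{476} produces a totally ordered idempotent semifield $F$ and a homomorphism $w'\colon L_{\mathcal{O}_K\leq 1}\to F$ whose restriction to $E$ is injective. I then set $j\colon E\to F$ to be this restriction and $w:=w'\circ q\colon L\to F$. Both are homomorphisms, $j$ is injective, and $F$ is of the required type; the identity $wi=jv$ follows by chasing the commuting square, $w\circ i=w'\circ q\circ i=w'\circ\iota\circ p=j\circ v$, using that $p$ is $v$ under the identification $E\cong K_{\mathcal{O}_K\leq 1}$.

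Essentially all of the genuine content lives in the previously established results (Proposition \ref{476} and Lemmas \ref{484b}, \ref{490b}, \ref{497}, \ref{503}), so the remaining work is bookkeeping. The one point that needs real care is the compatibility of the various algebra structures on $L_{\mathcal{O}_K\leq 1}$: one must confirm that the $\mathcal{O}_E$-algebra structure coming from $\mathcal{O}_K\to L\to L_{\mathcal{O}_K\leq 1}$ agrees with the one coming from $\mathcal{O}_E\subseteq E\subseteq L_{\mathcal{O}_K\leq 1}$ (which is again the commuting square), so that Lemma \ref{497} and Proposition \ref{476} may legitimately be invoked in sequence. I expect this to be the main, if minor, obstacle.
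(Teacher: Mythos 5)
Your argument follows the paper's proof essentially step for step: contract $K$ and $L$ along $\mathcal{O}_K$, use Lemma~\ref{484b} to identify $K_{\mathcal{O}_K\leq 1}$ with $E$ (so that the canonical map $K\to K_{\mathcal{O}_K\leq 1}$ becomes $v$), use Lemma~\ref{490b} to get injectivity of the induced map of contractions, use Lemma~\ref{503} to identify $(\mathcal{O}_K)_{\mathcal{O}_K\leq 1}$ with $\mathcal{O}_E$, use Lemma~\ref{497} to transfer extensibility, invoke Proposition~\ref{476}, and finish with the same diagram chase. The only cosmetic difference is that the paper keeps $v'\colon K_{\mathcal{O}_K\leq 1}\to E$ as an explicit isomorphism and defines $j=w'i'(v')^{-1}$, whereas you suppress $v'$ by identifying $E$ with $K_{\mathcal{O}_K\leq 1}$ outright; the content is identical.

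One point in your favor: Proposition~\ref{476} hypothesizes that its ambient semiring is simple, and the paper's proof applies it to $L'=L_{\mathcal{O}_K\leq 1}$ without explicitly checking this. You supply the missing check --- $L$ is unitgenerated hence simple, the canonical map $L\to L_{\mathcal{O}_K\leq 1}$ is surjective, and saturated ideals pull back under surjections so that simplicity passes to the quotient. That is a genuine and correct addition (a small citation quibble: surjectivity of $L\to L_{\mathcal{O}_K\leq 1}$ is really part of the construction underlying Lemma~\ref{317}, or follows from the universal property of Definition~\ref{411}, rather than from the statement of Lemma~\ref{317} itself). The ``real care'' you flag at the end about compatibility of the two $\mathcal{O}_E$-algebra structures on $L_{\mathcal{O}_K\leq 1}$ is correctly resolved by the commuting square: the structure map $\mathcal{O}_K\to L\to L_{\mathcal{O}_K\leq 1}$ factors through $(\mathcal{O}_K)_{\mathcal{O}_K\leq 1}\cong\mathcal{O}_E\hookrightarrow L_{\mathcal{O}_K\leq 1}$, so there is no conflict. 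The proof is correct.
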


\begin{proof}Let $K'=K_{\mathcal{O}_K\leq 1}$ and $L'=L_{\mathcal{O}_K\leq 1}$.  Let $i':K'\rightarrow L'$ be induced by $i$.  Then by Lemma \ref{490b}, $i'$ is injective.  Let $v':K'\rightarrow E$ be induced by $v$.  By Lemma \ref{484b} $v'$ is an isomorphism, so in particular $K'$ is a totally ordered idempotent semifield.  By Lemma \ref{503}, we may identify $\mathcal{O}_{K'}=\{x\in K'\mid v'(x)\leq 1\}=\{x\in K'\mid x\leq 1\}$ with $(\mathcal{O}_K)_{\mathcal{O}_K\leq 1}$.  By Lemma \ref{497}, $L'$ is extensible as an $\mathcal{O}_{K'}$-algebra.  Proposition \ref{476} gives a homomorphism $w':L'\rightarrow F$ into a totally ordered idempotent semifield such that $w'i'$ is injective.  Let $\pi_K:K\rightarrow K'$ and $\pi_L:L\rightarrow L'$ be the canonical maps.  Observe $v'\pi_K=v$ and $\pi_Li=i'\pi_K$.  Define $j:E\rightarrow F$ by $j=w'i'(v')^{-1}$.  Define $w:L\rightarrow F$ by $w=w'\pi_L$.  Clearly $j$ is injective since this is true for both $(v')^{-1}$ and $w'i'$.  It remains to check that $wi=jv$.  But $jv=jv'\pi_K=w'i'v'^{-1}v'\pi_K=w'i'\pi_K=w'\pi_Li=wi$.
\end{proof}

\begin{cor}\label{608}Let $K$ be a unitgenerated idempotent semiring.  Let $v:K\rightarrow E$ be a surjective homomorphism into some totally ordered idempotent semifield.  Let $L$ be an idempotent semifield containing $K$, and let $i:K\rightarrow L$ be the inclusion.  Then there is a totally ordered idempotent semifield $F$, an injective homomorphism $j:E\rightarrow F$, and a homomorphism $w:L\rightarrow F$ such that $wi=jv$.
\end{cor}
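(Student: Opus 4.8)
The plan is to deduce this statement directly from Theorem \ref{518} by verifying that its hypotheses are satisfied. The only discrepancy between the two statements is that here $L$ is merely assumed to be an idempotent semifield, rather than a unitgenerated idempotent semiring which is extensible as an $\mathcal{O}_K$-algebra; so the entire content of the proof is to check these two conditions.

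First I would observe that any idempotent semifield is unitgenerated: every nonzero element is a unit, hence is already a one-term sum of units, and $0$ is the empty sum. Thus $L$ is a unitgenerated idempotent semiring containing $K$, as required by Theorem \ref{518}.

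Next I would check that $L$ is extensible as an $\mathcal{O}_K$-algebra, where $\mathcal{O}_K=\{x\in K\mid v(x)\leq 1\}$. Since $K\subseteq L$, the subsemiring $\mathcal{O}_K$ maps into $L$, so $L$ is an $\mathcal{O}_K$-algebra which happens to be a semifield. Proposition \ref{307} then says that every element of $L$ which is quasiintegral over $\mathcal{O}_K$ is in fact integral over $\mathcal{O}_K$; this is precisely the assertion that $L$ is an extensible $\mathcal{O}_K$-algebra. With both conditions verified, I would simply apply Theorem \ref{518} to obtain the totally ordered idempotent semifield $F$, the injective homomorphism $j:E\rightarrow F$, and the homomorphism $w:L\rightarrow F$ satisfying $wi=jv$, which is exactly the desired conclusion.

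There is essentially no obstacle in this argument: the one substantive point is the recognition that in a semifield quasiintegrality and integrality coincide (Proposition \ref{307}), which makes the extensibility hypothesis of Theorem \ref{518} automatic in the semifield case; everything else is bookkeeping of the hypotheses.
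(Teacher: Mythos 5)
Your proposal is correct and is precisely the argument the paper intends: Corollary~\ref{608} is stated immediately after Theorem~\ref{518} with no separate proof, because (as you say) any idempotent semifield is unitgenerated, and Proposition~\ref{307} shows that an $\mathcal{O}_K$-algebra which is a semifield is automatically extensible, so Theorem~\ref{518} applies directly.
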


\begin{cor}Let $K$ be a unitgenerated idempotent semiring.  Let $v:K\rightarrow E$ be a surjective homomorphism into some totally ordered idempotent semifield.  Let $L$ be a unitgenerated idempotent semiring containing $K$, and let $i:K\rightarrow L$ be the inclusion.  Let $\mathcal{O}_K=\{x\in K\mid v(x)\leq 1\}$.  Suppose $L$ is seminoetherian as an $\mathcal{O}_K$-algebra.  Then there is a totally ordered idempotent semifield $F$, an injective homomorphism $j:E\rightarrow F$, and a homomorphism $w:L\rightarrow F$ such that $wi=jv$.
\end{cor}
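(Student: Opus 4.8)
The plan is to obtain this as a direct corollary of Theorem~\ref{518}: its hypotheses coincide with those here except that it asks for $L$ to be \emph{extensible} as an $\mathcal{O}_K$-algebra rather than seminoetherian as an $\mathcal{O}_K$-module. So the only real work is to check that the seminoetherian hypothesis forces extensibility, after which Theorem~\ref{518} applies verbatim to produce $F$, $j$, and $w$ with $wi=jv$.

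To see that $L$ is extensible over $\mathcal{O}_K$, I would first observe that $L$, being unitgenerated, is simple; since the $\mathcal{O}_K$-algebra structure on $L$ is the one coming from the composite $\mathcal{O}_K \hookrightarrow K \hookrightarrow L$, this makes $L$ a simple $\mathcal{O}_K$-algebra. Together with the standing assumption that $L$ is seminoetherian as an $\mathcal{O}_K$-module, Proposition~\ref{383} then says that every element of $L$ which is quasiintegral over $\mathcal{O}_K$ is integral over $\mathcal{O}_K$ --- which is precisely the definition of $L$ being an extensible $\mathcal{O}_K$-algebra. This is the same maneuver used to pass from Theorem~\ref{518} to Corollary~\ref{608}, with Proposition~\ref{383} playing the role that Proposition~\ref{307} plays there.

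I do not anticipate any genuine obstacle. The only points requiring mild care are notational: reading ``seminoetherian as an $\mathcal{O}_K$-algebra'' as the corresponding condition on the underlying $\mathcal{O}_K$-module (so that Proposition~\ref{383} is literally applicable), and confirming that the $\mathcal{O}_K$-algebra structure implicitly used in Theorem~\ref{518} is the one induced by the inclusions $K \subseteq L$ and $\mathcal{O}_K \subseteq K$. Once extensibility is in hand the conclusion follows immediately from Theorem~\ref{518}.
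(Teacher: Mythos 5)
Your argument is correct and is precisely the intended derivation: the corollary is Theorem~\ref{518} with extensibility supplied by Proposition~\ref{383}, using that $L$ is unitgenerated hence simple, exactly as the paper signals when it notes that Propositions~\ref{383} and~\ref{307} furnish sufficient conditions for extensibility. This mirrors how Corollary~\ref{608} follows from Theorem~\ref{518} via Proposition~\ref{307}.
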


\section{Extension of valuations for rings}

\begin{defn}\label{627}Let $R$ be a ring and $M$ be an $R$-module.  Let $\mathcal{S}_f(M,R)$ be the idempotent semigroup of submodules of $M$.
\end{defn}

If $M$ is an $R$-algebra then $\mathcal{S}_f(M,R)$ is actually a semiring with $R\subseteq M$ as multiplicative identity.  Here the product of finitely generated submodules $N,P\subseteq M$ is the submodule generated by $\{np\mid n\in N,p\in P\}$.

The following two results may be found in \cite{macpherson}.

\begin{prop}\label{535}Let $R$ be a ring and $A$ an $R$-algebra.  Let $\Gamma$ be a totally ordered abelian group.  Then there is a bijective correspondence between homomorphisms $\mathcal{S}_f(A,R)\rightarrow \Gamma_{\max}$ and valuations $A\rightarrow \Gamma_{\max}$ which are bounded by $1$ on $R$.  Under this correspondence, a homomorphism $f:\mathcal{S}_f(A,R)\rightarrow \Gamma_{\max}$ corresponds to the valuation $x\mapsto f(xR)$.
\end{prop}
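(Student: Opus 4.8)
The plan is to write down the two maps of the claimed correspondence explicitly and check that they are mutually inverse. Given a semiring homomorphism $f:\mathcal{S}_f(A,R)\to\Gamma_{\max}$, I would set $v_f(x)=f(xR)$, where $xR$ denotes the principal (in particular finitely generated) $R$-submodule of $A$ generated by $x$; this is exactly the assignment appearing in the statement. Conversely, given a valuation $v:A\to\Gamma_{\max}$ bounded by $1$ on $R$, I would define $f_v$ on a finitely generated submodule $M=Rx_1+\cdots+Rx_n$ by $f_v(M)=v(x_1)+\cdots+v(x_n)$, which, since $\Gamma_{\max}$ is idempotent, equals $\max_i v(x_i)$.

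First I would check that $v_f$ is a valuation bounded by $1$ on $R$. The only input beyond elementary submodule identities is that a homomorphism of idempotent semirings is automatically order-preserving. Then $v_f(1)=f(R)=1$ and $v_f(0)=f(0)=0$ are immediate; multiplicativity follows from the identity $(xR)(yR)=xyR$; the ultrametric inequality (Axiom \ref{90}) follows from $(x+y)R\subseteq xR+yR$ and monotonicity of $f$; and $v_f(r)=f(rR)\le f(R)=1$ for $r\in R$ because $rR\subseteq R$. The one place where the ring structure of $R$ is genuinely used is Axiom \ref{91}: from $x=1\cdot(x+y)+(-1)\cdot y$ one gets $xR\subseteq(x+y)R+yR$, hence $v_f(x)\le v_f(x+y)+v_f(y)$.

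The main obstacle is the well-definedness of $f_v$, namely that $\max_i v(x_i)$ does not depend on the chosen generating set of $M$. If $M=\sum_i Rx_i=\sum_j Ry_j$, I would write each $y_j=\sum_i r_{ij}x_i$; using the ultrametric inequality, multiplicativity of $v$, and $v(r_{ij})\le1$, one gets $v(y_j)\le\max_i v(x_i)$, so $\max_j v(y_j)\le\max_i v(x_i)$, and symmetry yields equality. Note that this direction does not use Axiom \ref{91}, and that this is precisely where the hypothesis ``bounded by $1$ on $R$'' is essential. Granting well-definedness, that $f_v$ is a semiring homomorphism is routine: additivity comes from concatenating generating sets, the zero submodule and $R=R\cdot1$ map to $0$ and $1$, and multiplicativity uses that $MN$ is generated by the products $x_iy_j$ together with the distributivity of multiplication over $\max$ in $\Gamma_{\max}$.

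Finally I would check that the two constructions invert one another. Starting from $f$: for $M=\sum_i x_iR$ the homomorphism $f_{v_f}$ sends $M$ to $\sum_i f(x_iR)=f\bigl(\sum_i x_iR\bigr)=f(M)$ by additivity of $f$, so $f_{v_f}=f$. Starting from $v$: the valuation $v_{f_v}$ sends $x$ to $f_v(xR)=v(x)$ since $xR$ is generated by the single element $x$, so $v_{f_v}=v$. This establishes the bijection, and by construction $f$ corresponds to $x\mapsto f(xR)$, as asserted.
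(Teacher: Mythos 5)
The paper does not actually prove this proposition; it is stated together with Proposition~\ref{534} with the remark that both ``may be found in~\cite{macpherson}.'' So there is no in-paper proof to compare against, and your proposal has to stand on its own as a direct verification. It does: the two maps $f\mapsto v_f$ and $v\mapsto f_v$ are the natural ones, all the semiring and valuation axioms are checked, and the two constructions are shown to be mutually inverse. In particular you correctly isolate the two subtle points. For well-definedness of $f_v$, writing $y_j=\sum_i r_{ij}x_i$ and applying the ultrametric inequality, multiplicativity, and the bound $v(r_{ij})\le1$ gives $\max_j v(y_j)\le\max_i v(x_i)$, and symmetry closes the argument; this is exactly where the hypothesis ``bounded by $1$ on $R$'' is used and where Axiom~\ref{91} is not needed. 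Conversely, for $v_f$ to satisfy Axiom~\ref{91} you observe $x=1\cdot(x+y)+(-1)\cdot y$, so $xR\subseteq(x+y)R+yR$, hence $v_f(x)\le v_f(x+y)+v_f(y)$ by monotonicity and additivity of $f$; this is the one place the ring structure of $R$ (availability of $-1$) is genuinely used, as it should be, since Axiom~\ref{91} is otherwise an independent requirement. The remaining checks (identities $(xR)(yR)=xyR$, $(x+y)R\subseteq xR+yR$, $rR\subseteq R$, additivity and multiplicativity of $f_v$, and the two composite identities $f_{v_f}=f$ and $v_{f_v}=v$) are all correct and routine. I see no gap.
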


\begin{prop}\label{534}Let $R$ be a ring and $M$ an $R$-module.  There is a bijective correspondence between saturated subsemigroups of $\mathcal{S}_f(M,R)$ and submodules of $M$.  A submodule $N\subseteq M$ corresponds to $\{N'\subseteq N\mid N'\,\mathrm{is\, finitely\, generated}\}$.  
\end{prop}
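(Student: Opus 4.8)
The plan is to exhibit the correspondence by an explicit pair of mutually inverse maps. Recall that addition in $\mathcal{S}_f(M,R)$ is the sum of submodules, so it is idempotent and its canonical partial order is containment: $N_1+N_2=N_2$ precisely when $N_1\subseteq N_2$. By the proposition following Definition \ref{105b}, a subsemigroup of $\mathcal{S}_f(M,R)$ that is downward closed under containment is saturated; conversely, if $S$ is a saturated subsemigroup and $N_1\subseteq N_2\in S$, then $N_1+N_2=N_2\in S$, so saturation forces $N_1\in S$. Hence the saturated subsemigroups of $\mathcal{S}_f(M,R)$ are exactly the (nonempty) collections of finitely generated submodules of $M$ closed under finite sums and under passing to finitely generated submodules; any such collection automatically contains the zero submodule, the minimum of $\mathcal{S}_f(M,R)$.

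The two maps are $\Phi(N)=\{N'\subseteq N\mid N'\text{ finitely generated}\}$ for a submodule $N\subseteq M$, and $\Psi(S)=\bigcup_{N'\in S}N'$ for a saturated subsemigroup $S$. I would first check $\Phi(N)$ is a saturated subsemigroup: it is closed under sums since the sum of two finitely generated submodules of $N$ is again one, and it is downward closed in $\mathcal{S}_f(M,R)$ since any finitely generated submodule contained in an element of $\Phi(N)$ is itself a finitely generated submodule of $N$. It is worth emphasizing that the order on $\mathcal{S}_f(M,R)$ only compares finitely generated submodules, so the familiar warning that a submodule of a finitely generated module need not be finitely generated is irrelevant here. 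Next I would check $\Psi(S)$ is a submodule: it is closed under scalar multiplication and contains $0$, and it is closed under addition because $S$ is directed --- given $x\in N_1$ and $y\in N_2$ with $N_1,N_2\in S$, both lie in $N_1+N_2\in S$, so $x+y\in\Psi(S)$.

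Then I would verify the round trips. For $\Psi(\Phi(N))=N$: the inclusion $\subseteq$ is immediate, and $\supseteq$ holds because $Rx\in\Phi(N)$ for every $x\in N$. For $\Phi(\Psi(S))=S$: the inclusion $\supseteq$ holds because each $N'\in S$ is a finitely generated submodule of $\Psi(S)$; for $\subseteq$, given a finitely generated submodule $P=Rx_1+\cdots+Rx_n$ of $\Psi(S)$, choose $N_i\in S$ with $x_i\in N_i$, note $Rx_i\subseteq N_i$ so $Rx_i\in S$ by saturation, and conclude $P=\sum_i Rx_i\in S$ since $S$ is closed under sums. Since $\Phi$ and $\Psi$ are visibly order-preserving, the correspondence is in fact an isomorphism of partially ordered sets.

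I do not anticipate a genuine obstacle; the argument is bookkeeping. The only point requiring care is keeping the two layers distinct --- arbitrary submodules of $M$ versus the finitely generated ones that constitute the underlying set of $\mathcal{S}_f(M,R)$ --- and recognizing that downward closure in $\mathcal{S}_f(M,R)$ tests only finitely generated submodules, which is exactly what makes $\Phi(N)$ saturated with no finiteness hypothesis on $M$; one should also fix the convention that saturated subsemigroups are nonempty so that $\Psi$ lands among submodules.
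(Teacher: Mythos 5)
The paper quotes this proposition from \cite{macpherson} without proof, so there is no in-paper argument to compare yours against. That said, your proof is correct and complete. The two maps $\Phi$ and $\Psi$ are the right ones, and the point you single out --- that downward closure in $\mathcal{S}_f(M,R)$ is tested only against finitely generated submodules, so $\Phi(N)$ is automatically saturated with no Noetherian hypothesis on $N$ --- is exactly where care is needed. The directedness of a saturated subsemigroup $S$ (via closure under sums) is what makes $\Psi(S)$ a submodule, and both round-trip verifications are handled cleanly, with the $\Phi(\Psi(S))\subseteq S$ direction correctly reducing to cyclic submodules $Rx_i$ via saturation. The nonemptiness convention you raise is the right one to pin down; note also that every $\Phi(N)$ contains the zero submodule, so the image of $\Phi$ automatically lands among nonempty saturated subsemigroups, which makes the bijection well posed once that convention is in force.
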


With these results in mind, the classical extension of valuations theorem states that given a valued field $K$ and an extension $L/K$, one may extend the corresponding homomorphism $\mathcal{S}_f(K,\mathcal{O}_K)\rightarrow\Gamma_{\max}$ to $\mathcal{S}_f(L,\mathcal{O}_K)$.  We will give another proof of this theorem below.  The difficult part is to show $\mathcal{S}_f(L,\mathcal{O}_K)$ is an extensible $\mathcal{S}_f(\mathcal{O}_K,\mathcal{O}_K)$ algebra.  The following lemma is a consequence of the Cayley-Hamilton theorem; this is the only place we will use any deep results of commutative algebra.

\begin{lem}Let $R$ be a ring and $A$ an $R$-algebra.  Let $x\in A$.  Suppose there is a finitely generated $R$-submodule $M\subseteq A$ such that $xM\subseteq M$ and $1\in M$.  Then $x$ is a zero of a monic polynomial $f\in R[T]$.
\end{lem}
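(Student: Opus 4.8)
The plan is to run the classical determinant trick (the Cayley--Hamilton argument). First I would pick a finite generating set $m_1,\ldots,m_n$ for $M$ as an $R$-module. Since $xM\subseteq M$, for each $i$ there are coefficients $a_{ij}\in R$ with $xm_i=\sum_{j=1}^n a_{ij}m_j$. Assembling these into a matrix $B=(a_{ij})\in\mathrm{Mat}_n(R)$, the relations say precisely that the column vector $\mathbf{m}=(m_1,\ldots,m_n)^{\mathrm{t}}\in A^n$ is annihilated by the matrix $xI_n-B$, whose entries lie in the commutative subring $R[x]\subseteq A$ generated by $x$ and the image of $R$.

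Next I would multiply the identity $(xI_n-B)\mathbf{m}=0$ on the left by the adjugate matrix $\mathrm{adj}(xI_n-B)$, invoking the standard identity $\mathrm{adj}(P)\,P=\det(P)\,I_n$ valid over any commutative ring. This gives $\det(xI_n-B)\,m_i=0$ for every $i$. Because $1\in M$, I may write $1=\sum_i c_i m_i$ with $c_i\in R$, and then $\det(xI_n-B)=\sum_i c_i\bigl(\det(xI_n-B)\,m_i\bigr)=0$ in $A$.

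Finally I would set $f(T)=\det(TI_n-B)\in R[T]$, the characteristic polynomial of $B$, which is monic of degree $n$. Applying the ring homomorphism $R[T]\to R[x]\subseteq A$ sending $T\mapsto x$ and using functoriality of the determinant along this map, one gets $f(x)=\det(xI_n-B)=0$, so $x$ is a zero of the monic polynomial $f$.

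The argument is essentially routine; the only point requiring care is that every matrix computation above — the adjugate, the determinant, and the identity $\mathrm{adj}(P)\,P=\det(P)\,I_n$ — must be performed over a commutative ring, so one should first note that $x$ together with the image of $R$ generates a commutative subring of $A$ inside which $xI_n-B$ lives. This is precisely where commutativity enters, and the determinant identities over commutative rings (equivalently, Cayley--Hamilton) are the only nontrivial external input.
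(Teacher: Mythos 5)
Your proof is correct and takes essentially the same approach as the paper: the paper simply invokes the Cayley--Hamilton theorem as a black box to produce a monic $f$ with $f(x)M=0$ and then evaluates at $1\in M$, while you unpack that invocation into the explicit determinant/adjugate argument. Same idea, just spelled out in full.
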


\begin{proof}By the Cayley-Hamilton theorem, there is a monic polynomial $f(T)$ such that $f(x)M=0$.  Since $1\in M$, $f(x)=0$.
\end{proof}

Another fact we will use is that fields correspond to unitgenerated semirings.

\begin{lem}\label{548}Let $R$ be a ring and $A$ be an $R$-algebra.  If $A$ is a field then $\mathcal{S}_f(A,R)$ is unitgenerated.
\end{lem}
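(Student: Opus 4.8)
The plan is to show that every nonzero principal submodule $xR \subseteq A$ is a unit in $\mathcal{S}_f(A,R)$, and then invoke the fact that every finitely generated submodule of $A$ is a sum of finitely many principal submodules. First I would recall that $\mathcal{S}_f(A,R)$ has multiplicative identity $R = 1\cdot R$, and that the product of submodules $N, P$ is the submodule generated by $\{np \mid n\in N, p\in P\}$; in particular, for principal submodules we have $(xR)(yR) = (xy)R$. Since $A$ is a field and $x \neq 0$, the element $x^{-1} \in A$ exists, so $(xR)(x^{-1}R) = (xx^{-1})R = R = 1$ in $\mathcal{S}_f(A,R)$. Hence every nonzero principal submodule is a unit. (The zero submodule is of course not a unit, but it is also not needed: it equals the empty sum.)

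Next I would handle an arbitrary element $M \in \mathcal{S}_f(A,R)$, i.e.\ a finitely generated $R$-submodule of $A$. Write $M = x_1 R + \cdots + x_n R$ for generators $x_1, \ldots, x_n \in A$; as elements of the semigroup $\mathcal{S}_f(A,R)$ this reads $M = x_1 R + \cdots + x_n R$, a finite sum. Discarding any generators that are $0$ (which contribute the additive identity and may be dropped, unless all of them vanish, in which case $M = 0$ is the empty sum of units), we may assume each $x_i \neq 0$, so each $x_i R$ is a unit by the previous paragraph. Therefore $M$ is a finite sum of units, which is exactly the definition of $\mathcal{S}_f(A,R)$ being unitgenerated.

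The argument is essentially immediate once the semiring structure on $\mathcal{S}_f(A,R)$ is unwound, so there is no real obstacle; the only point requiring a moment's care is the bookkeeping around the zero submodule, which one handles by observing that $0$ is the empty sum of units and that zero generators may be omitted from any nontrivial finite generating set. It is also worth noting explicitly that this lemma is the easy direction of the equivalence discussed after Definition \ref{627} and in the earlier example relating $\mathcal{S}_f(A,R)$ to fields; here we only need the implication ``$A$ a field $\Rightarrow$ $\mathcal{S}_f(A,R)$ unitgenerated,'' so no converse or simplicity argument is required.
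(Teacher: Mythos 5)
Your proof is correct and takes essentially the same approach as the paper's, which observes that $\mathcal{S}_f(A,R)$ is generated by principal submodules and that the principal submodule generated by a unit of $A$ is invertible. You simply spell out the computation $(xR)(x^{-1}R)=R$ and add the (harmless but tidy) bookkeeping about the zero submodule.
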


\begin{proof}$\mathcal{S}_f(A,R)$ is generated by principal submodules, and the submodule generated by a single unit is invertible.
\end{proof}

\begin{prop}\label{554}Let $R$ be a ring and let $A$ be an $R$-algebra which is a field.  Then $\mathcal{S}_f(A,R)$ is an extensible $\mathcal{S}_f(R,R)$-module.
\end{prop}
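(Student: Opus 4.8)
The plan is to deduce the statement from the Cayley--Hamilton lemma above, by converting a module-finiteness statement inside $A$ into the semiring criterion for integrality. Write $\mathcal{R}=\mathcal{S}_f(R,R)$ and $\mathcal{A}=\mathcal{S}_f(A,R)$, and view any finitely generated $R$-submodule of $A$ as an element of $\mathcal{A}$; recall that $\leq$ on $\mathcal{A}$ is inclusion of submodules, that the unit $\mathbf{1}\in\mathcal{A}$ is the image of $R$ in $A$, and that $IN\subseteq N$ whenever $I\in\mathcal{R}$ and $N\in\mathcal{A}$. Since $A$ is a field, Lemma~\ref{548} shows $\mathcal{A}$ is unitgenerated, hence simple.

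First I would extract from quasiintegrality a finitely generated $R$-submodule of $A$ over which Cayley--Hamilton can be run. Let $M\in\mathcal{A}$ be quasiintegral over $\mathcal{R}$; the case $M=0$ is trivial, so assume $M\neq 0$ and write $M$ also for the $R$-submodule of $A$ generated by elements $m_1,\dots,m_k\in A$. Because $\mathcal{A}$ is simple, Lemma~\ref{375} lets us choose the module $\mathcal{M}$ of Definition~\ref{362} so that $\mathcal{R}\angles{M}\subseteq\mathcal{M}$. Its {\sfinite}ness over $\mathcal{R}$ then produces a single finitely generated $R$-submodule $N_0\subseteq A$ with $\mathbf{1}\subseteq N_0$, $M\subseteq N_0$, and $MN_0\subseteq N_0$: indeed $\mathbf{1}$, $M$ and every power $M^j$ lie in $\mathcal{R}\angles{M}\subseteq\mathcal{M}$ and so are $\subseteq IN_0\subseteq N_0$ for suitable $I\in\mathcal{R}$, while $MN_0\in\mathcal{M}$ forces $MN_0\subseteq N_0$. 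In particular $m_iN_0\subseteq N_0$ and $1\in N_0$, so the Cayley--Hamilton lemma above gives, for each $i$, a monic $f_i\in R[T]$ of degree $d_i\geq 1$ with $f_i(m_i)=0$ in $A$.

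Next I would reassemble these relations into the integrality witness. Let $B\subseteq A$ be the image of the subring $R[m_1,\dots,m_k]$. Reducing exponents by the relations $f_i(m_i)=0$, the $R$-module $B$ is generated by the finitely many monomials $m_1^{e_1}\cdots m_k^{e_k}$ with $0\leq e_i<d_i$, so $B$ is a finitely generated $R$-submodule of $A$, i.e.\ $B\in\mathcal{A}$. Setting $D=\sum_i(d_i-1)$ one checks $B=\sum_{j=0}^{D}M^j$ inside $\mathcal{A}$: each generating monomial has total degree $j\leq D$ and lies in $M^j$, giving $B\subseteq\sum_{j=0}^{D}M^j$, while $M\subseteq B$ together with the multiplicative closure of $B$ gives $M^j\subseteq B$ for all $j$. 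Hence every element of $\mathcal{R}\angles{M}$ (the saturated $\mathcal{R}$-submodule of $\mathcal{A}$ generated by the powers of $M$) is dominated by a finite sum $\sum_j C_jM^j$ with $C_j\in\mathcal{R}$ and therefore lies in $B$, while $B=\sum_{j=0}^{D}M^j$ itself lies in $\mathcal{R}\angles{M}$. Thus $\mathcal{R}\angles{M}$ is {\sfinite} over $\mathcal{R}$, with generator $B$; equivalently $M^n\leq \mathbf{1}+M+\dots+M^{n-1}$ for $n=D+2$. By the criterion for integrality proved above, $M$ is integral over $\mathcal{R}$. Since $M$ was an arbitrary quasiintegral element, $\mathcal{S}_f(A,R)$ is extensible over $\mathcal{S}_f(R,R)$.

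The step I expect to be the main obstacle is precisely this translation: Cayley--Hamilton is about a single element $m_i$ of the ring $A$ and a finitely generated faithful module, whereas integrality in $\mathcal{S}_f(A,R)$ is a statement about the whole submodule $M$. The device that bridges the two is the observation that the finite $R$-module $R[m_1,\dots,m_k]$ --- the natural home of the monic relations --- coincides, as an element of $\mathcal{S}_f(A,R)$, with the finite sum $\sum_{j=0}^{D}M^j$ of powers of $M$, which is exactly the finite generator needed to certify integrality. One should also keep track of the fact that $\mathcal{S}_f(A,R)$ is simple, since that is what makes Lemma~\ref{375} available and hence supplies the hypothesis $1\in N_0$ under which the Cayley--Hamilton lemma applies.
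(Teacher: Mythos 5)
Your argument is correct, and at its core it is the same mechanism as the paper's: reduce quasiintegrality in $\mathcal{S}_f(A,R)$ to a statement about $R$-submodules of $A$ via Lemma~\ref{375}, invoke Cayley--Hamilton there, and translate the resulting monic relation back into the inequality $x^n\leq 1+x+\dots+x^{n-1}$ in $\mathcal{S}_f(A,R)$. The place where you genuinely diverge --- and improve on the written proof --- is the scope: the paper's proof opens with ``Suppose $x$ is principal, i.e.\ $x=\hat xR$'' and ends there, leaving implicit the reduction from a general finitely generated $M=\langle m_1,\dots,m_k\rangle$ to the principal case (which would require, at minimum, that quasiintegrality is inherited by elements $\leq M$ and that the integral elements form a subsemiring). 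You instead treat arbitrary nonzero $M$ directly: from Lemma~\ref{375} you extract a finitely generated $N_0$ with $1\in N_0$, $M\subseteq N_0$ and $MN_0\subseteq N_0$ (the last step silently using that every $I\in\mathcal{S}_f(R,R)$ satisfies $I\leq\mathbf 1$, so $IN_0\subseteq N_0$ --- worth stating), apply Cayley--Hamilton to each generator $m_i$, and then observe the identity $R[m_1,\dots,m_k]=\sum_{j=0}^{D}M^j$ with $D=\sum_i(d_i-1)$. That last identity is exactly the bridge the paper's proof elides, and it cleanly produces the single generator of $\mathcal{S}_f(R,R)\angles{M}$ needed for integrality. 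In short: same key lemmas and same Cayley--Hamilton core, but your version handles the non-principal case explicitly and thereby closes a gap the paper passes over.
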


\begin{proof}Let $x\in \mathcal{S}_f(A,R)$ be quasiintegral over $\mathcal{S}_f(R,R)$.  Suppose $x$ is principal, i.e. $x=\hat{x}R$ for some $\hat{x}\in A$.  By Lemma \ref{375}, there is some saturated $\mathcal{S}_f(R,R)$-submodule (and hence subsemigroup) $M$ with $\mathcal{S}_f(R,R)\subseteq M \subseteq \mathcal{S}_f(A,R)$ such that $xM\subseteq M$.  This yields an $R$-module $\hat{M}$ with $R\subseteq \hat{M}\subseteq A$ and $\hat{x}\hat{M}\subseteq \hat{M}$.  Hence $\hat{x}$ is a zero of a monic polynomial $f\in R[T]$.  Then $\hat{x}^n$ is in the $R$-submodule generated by $1,\ldots,\hat{x}^{n-1}$ for some $n$.  This implies that $x^n\leq 1+\ldots+x^{n-1}$ so $x$ is integral over $\mathcal{S}_f(R,R)$.
\end{proof}

\begin{thm}Let $K$ be a field.  Fix a valuation $v:K\rightarrow \Gamma\cup\{0\}$ where $\Gamma$ is a totally ordered abelian group.  Let $L$ be an extension of $K$.  Then there is a totally ordered abelian group $\Gamma'\supseteq \Gamma$ and a valuation $w:L\rightarrow \Gamma'\cup\{0\}$ such that $v=w\mid_K$.
\end{thm}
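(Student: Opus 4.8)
The plan is to push the whole problem into the category of idempotent semirings via $R\mapsto\mathcal{S}_f(-,R)$ and then apply the characteristic-one extension theorem, Theorem \ref{518}.

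First I would normalise: replacing $\Gamma$ by the value group of $v$ we may assume $v\colon K\to\Gamma_{\max}$ is surjective, and we set $\mathcal{O}_K=\{x\in K\mid v(x)\le 1\}$, which is a subring of $K$ on which $v$ is bounded by $1$. By Proposition \ref{535} this surjective valuation corresponds to a surjective semiring homomorphism $\phi\colon\mathcal{S}_f(K,\mathcal{O}_K)\to\Gamma_{\max}$ with $\phi(x\mathcal{O}_K)=v(x)$ for all $x\in K$. Since $K$ and $L$ are fields, Lemma \ref{548} shows $\mathcal{S}_f(K,\mathcal{O}_K)$ and $\mathcal{S}_f(L,\mathcal{O}_K)$ are both unitgenerated, and the first is a subsemiring of the second (a finitely generated $\mathcal{O}_K$-submodule of $K$ is one of $L$, products and sums are computed the same way in either, and both have identity $\mathcal{O}_K$); write $i$ for this inclusion.

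The next step, and the one I expect to require the most care, is to pin down the subsemiring $\{M\in\mathcal{S}_f(K,\mathcal{O}_K)\mid\phi(M)\le 1\}$. Writing $M=\sum_j a_j\mathcal{O}_K$ we have $\phi(M)=\sum_j\phi(a_j\mathcal{O}_K)=\sum_j v(a_j)=\max_j v(a_j)$ in $\Gamma_{\max}$, so $\phi(M)\le 1$ if and only if every $a_j$ lies in $\mathcal{O}_K$, i.e. if and only if $M\subseteq\mathcal{O}_K$. Hence
\[
  \{\,M\in\mathcal{S}_f(K,\mathcal{O}_K)\mid\phi(M)\le 1\,\}=\mathcal{S}_f(\mathcal{O}_K,\mathcal{O}_K).
\]
Now Proposition \ref{554} asserts that $\mathcal{S}_f(L,\mathcal{O}_K)$ is extensible as an algebra over $\mathcal{S}_f(\mathcal{O}_K,\mathcal{O}_K)$, and by the displayed equality this is exactly the extensibility hypothesis (over the valuation subsemiring attached to $\phi$) demanded by Theorem \ref{518}.

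Finally I would apply Theorem \ref{518} with $(K,v,L)$ there taken to be $(\mathcal{S}_f(K,\mathcal{O}_K),\phi,\mathcal{S}_f(L,\mathcal{O}_K))$: it yields a totally ordered idempotent semifield $F=\Gamma'_{\max}$, an injective homomorphism $j\colon\Gamma_{\max}\to\Gamma'_{\max}$, and a homomorphism $\psi\colon\mathcal{S}_f(L,\mathcal{O}_K)\to\Gamma'_{\max}$ with $\psi i=j\phi$. The restriction of $j$ to nonzero elements is an injective order-preserving homomorphism $\Gamma\to\Gamma'$, so after replacing $\Gamma'$ by an isomorphic ordered abelian group I may assume $\Gamma\subseteq\Gamma'$ with $j$ the inclusion. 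Running the correspondence of Proposition \ref{535} in the other direction, $\psi$ corresponds to a valuation $w\colon L\to\Gamma'_{\max}$ with $w(x)=\psi(x\mathcal{O}_K)$; for $x\in K$ one computes $w(x)=\psi(i(x\mathcal{O}_K))=j(\phi(x\mathcal{O}_K))=j(v(x))=v(x)$, so $w\mid_K=v$. Reading $\Gamma'_{\max}$ as $\Gamma'\cup\{0\}$ gives the theorem. The genuinely hard mathematics has all been front-loaded into Proposition \ref{554} (where the Cayley--Hamilton theorem is invoked) and Theorem \ref{518}; what remains here is the translation dictionary between valuations on $K\subseteq L$ and homomorphisms out of $\mathcal{S}_f(-,\mathcal{O}_K)$.
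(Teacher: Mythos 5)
Your proposal follows the paper's proof essentially verbatim: normalize $v$ to be surjective, pass to $\mathcal{S}_f(-,\mathcal{O}_K)$ via Lemma \ref{548} and Proposition \ref{535}, identify $\mathcal{O}_{\tilde K}$ with $\tilde{\mathcal{O}}_K$ (the paper cites Proposition \ref{534} for this, whereas you verify the equality $\{M\mid\phi(M)\le 1\}=\mathcal{S}_f(\mathcal{O}_K,\mathcal{O}_K)$ by a direct computation with generators), invoke Proposition \ref{554} for extensibility, and then apply Theorem \ref{518} and translate back with Proposition \ref{535}. The argument is correct and the only differences are expository — you make explicit the surjectivity normalization and the identification of $\Gamma$ with a subgroup of $\Gamma'$, which the paper leaves implicit.
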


\begin{proof}Let $\mathcal{O}_K=\{x\in K\mid v(x)\leq 1\}$.  For any $\mathcal{O}_K$-algebra $A$, we let $\tilde{A}$ denote $\mathcal{S}_f(A,\mathcal{O}_K)$.  By Lemma \ref{548}, $\tilde{K}$ and $\tilde{L}$ are unitgenerated.  Observe that $\mathcal{O}_K$ is the multiplicative identity in $\tilde{K}$.  By Proposition \ref{534}, $\mathcal{O}_{\tilde{K}}=\{x\in\tilde{K}\mid x\leq 1\}$ is the subsemigroup of $\tilde{K}$ corresponding to $\mathcal{O}_K$, i.e. $\mathcal{O}_{\tilde{K}}=\tilde{\mathcal{O}}_K$.  By Proposition \ref{554}, we know that $\tilde{L}$ is an extensible $\tilde{\mathcal{O}}_K$-algebra, so is an extensible $\mathcal{O}_{\tilde{K}}$-module.  By Proposition \ref{535} $v$ yields a homomorphism $\tilde{v}:\tilde{K}\rightarrow \Gamma_{\max}$.  By Theorem \ref{518}, we obtain a homomorphism $\tilde{w}:\tilde{L}\rightarrow \Gamma'_{\max}$ extending $\tilde{v}$ for some totally ordered abelian group $\Gamma'_{\max}$.  Applying Proposition \ref{535} again yields the result. 
\end{proof}

\bibliography{subobjbib}

\begin{thebibliography}{1}

\bibitem{arithmeticsite}
Alain Connes and Caterina Consani.
\newblock The arithmetic site.
\newblock {\em Comptes Rendus Mathematique}, 352(12):971--975, 2014.

\bibitem{scalingsite}
Alain Connes and Caterina Consani.
\newblock The scaling site.
\newblock {\em Comptes Rendus Mathematique}, 354(1):1--6, 2016.

\bibitem{giansiracusa}
Jeffrey Giansiracusa and Noah Giansiracusa.
\newblock Equations of tropical varieties.
\newblock {\em arXiv preprint arXiv:1308.0042}, 2013.

\bibitem{joomin}
D{\'a}niel Jo{\'o} and Kalina Mincheva.
\newblock Prime congruences of idempotent semirings and a nullstellensatz for
  tropical polynomials.
\newblock {\em arXiv preprint arXiv:1408.3817}, 2014.

\bibitem{macpherson}
Andrew~W Macpherson.
\newblock Skeleta in non-archimedean and tropical geometry.
\newblock {\em arXiv preprint arXiv:1311.0502}, 2013.

\end{thebibliography}
\bibliographystyle{plain}

\end{document}